\newif\ifJournal
\newtheorem{theorem}{Theorem}[section]
\newtheorem{lemma}[theorem]{Lemma}
\newtheorem{corollary}[theorem]{Corollary}
\newtheorem{remark}[theorem]{Remark}
\newtheorem{proof}[theorem]{Proof}
\definecolor{MyDarkGreen}{rgb}{0,0.45,0}
\newcommand{\BLUE}[1]{{#1}}
\newcommand{\RED} [1]{{#1}}
\def\trait #1 #2 #3 {\vrule width #1pt height #2pt depth #3pt}
\def\fin{\hfill
        \trait .3 5 0
        \trait 5 .3 0
        \kern-5pt
        \trait 5 5 -4.7
        \trait 0.3 5 0
\medskip}
\newcommand{\ENDPROOF}{\fin}
\newcommand{\VEM}{\text{VEM}}
\newcommand{\INTP}{\footnotesize{I}}
\newcommand{\REAL}{\mathbbm{R}}
\newcommand{\INTG}{\mathbbm{N}}
\newcommand{\ASSUM}[1]{\textbf{(#1)}}
\newcommand{\av}{\mathbf{a}}
\newcommand{\ev}{\mathbf{e}}
\newcommand{\fv}{\mathbf{f}}
\newcommand{\gv}{\mathbf{g}}
\newcommand{\kv}{\mathbf{k}}
\newcommand{\nv}{\mathbf{n}}
\newcommand{\qv}{\mathbf{q}}
\newcommand{\uv}{\mathbf{u}}
\newcommand{\vv}{\mathbf{v}}
\newcommand{\wv}{\mathbf{w}}
\newcommand{\xv}{\mathbf{x}}
\newcommand{\yv}{\mathbf{y}}
\newcommand{\Iv}{\mathbf{I}}
\newcommand{\Kv}{\mathbf{K}}
\newcommand{\Mv}{\mathbf{M}}
\newcommand{\Qv}{\mathbf{Q}}
\newcommand{\Uv}{\mathbf{U}}
\newcommand{\Vv}{\mathbf{V}}
\newcommand{\as}{a}
\newcommand{\cs}{c}
\newcommand{\es}{e}
\newcommand{\ks}{k}
\newcommand{\ms}{m}
\newcommand{\ps}{p}
\newcommand{\qs}{q}
\newcommand{\ts}{t}
\newcommand{\vs}{v}
\newcommand{\ws}{w}
\newcommand{\xs}{x}
\newcommand{\ys}{y}
\newcommand{\Cs}{C}
\newcommand{\Fs}{F}
\newcommand{\Gs}{G}
\newcommand{\Ls}{L}
\newcommand{\Ps}{P}
\newcommand{\Ts}{T}
\newcommand{\matD}{\mathsf{D}}
\newcommand{\matI}{\mathsf{I}}
\newcommand{\matK}{\mathsf{K}}
\newcommand{\matM}{\mathsf{M}}
\newcommand{\calA}{\mathcal{A}}
\newcommand{\calD}{\mathcal{D}}
\newcommand{\calK}{\mathcal{K}}
\newcommand{\calM}{\mathcal{M}}
\newcommand{\calO}{\mathcal{O}}
\newcommand{\HONE}  {H^1}
\newcommand{\HONEzr}{H^1_0}
\newcommand{\HTWO}  {H^2}
\newcommand{\HONEgm}{H^1_{\Gamma_D}}
\newcommand{\LTWO}  {L^2}
\newcommand{\LINF}  {L^{\infty}}
\newcommand{\HS}[1] {H^{#1}}
\newcommand{\CS}[1] {C^{#1}}
\newcommand{\VS}[1] {V^{#1}}
\newcommand{\PS}[1] {\mathbbm{P}_{#1}}
\renewcommand{\P} {\textsf{P}}            
\newcommand  {\E} {e}
\newcommand{\hh}{h}
\newcommand{\Th}{\Omega_{\hh}}
\newcommand{\xvP}{\xv_{\P}}        
\newcommand{\xvE}{\xv_{\E}}        
\newcommand{\dims}{2}              
\newcommand{\DIM} {d}              
\newcommand{\hP}{\hh_{\P}}
\newcommand{\hE}{\hh_{\E}}
\newcommand{\mP}{\ABS{\P}}
\newcommand{\mE}{\ABS{\E}}
\newcommand{\NMB}{N}
\newcommand{\dV}{\,dV}
\newcommand{\dS}{\,ds}
\newcommand{\dt}{\,dt}
\newcommand{\norE} {\mathbf{n}_{\E}}
\newcommand{\norP} {\mathbf{n}_{\P}}
\newcommand{\vsh}{\vs_{\hh}}
\newcommand{\Fsh}{\Fs_{\hh}}
\newcommand{\uvh}{\uv_{\hh}}
\newcommand{\vvh}{\vv_{\hh}}
\newcommand{\evh}{\ev_{\hh}}
\newcommand{\uvI}{\uv_{\INTP}}
\newcommand{\wvh}{\wv_{\hh}}
\newcommand{\fvh}{\fv_{\hh}}
\newcommand{\asP}{\as^{\P}}
\newcommand{\msP}{\ms^{\P}}
\newcommand{\ash}{\as_{\hh}}
\newcommand{\msh}{\ms_{\hh}}
\newcommand{\ashP}{\as^P_{\hh}}
\newcommand{\mshP}{\ms^P_{\hh}}
\newcommand{\scal} [1]{\left(#1\right)}
\newcommand{\SP} {S^{\P}}
\newcommand{\nlen}{\hspace{-0.2mm}}
\newcommand{\hskp}{\hspace{0.2mm}}
\newcommand{\snorm}  [2]{|#1|_{#2}}
\newcommand{\SNORM}  [2]{\left|#1\right|_{#2}}
\newcommand{\norm}   [2]{|\nlen|#1|\nlen|_{#2}}
\newcommand{\Norm}   [2]{\Big|\nlen\nlen\Big|\hskp #1\hskp\Big|\nlen\nlen\Big|_{#2}}
\newcommand{\NORM}   [2]{\left|\nlen\left|#1\right|\nlen\right|_{#2}}
\newcommand{\TNORM}  [2]{\left|\nlen\left|\nlen\left|#1\right|\nlen\right|\nlen\right|_{{}_{#2}}}
\newcommand{\abs}    [1]{|#1|}
\newcommand{\ABS}    [1]{\left|#1\right|}
\newcommand{\Vhk} {\VS{\hh}_{k}}
\newcommand{\Vvhk} {\textbf{V}^{\hh}_{k}}
\newcommand{\Pin}[1]{\Pi^{\nabla}_{#1}}
\newcommand{\Piz}[1]{\Pi^{0}_{#1}}
\newcommand{\uvp}{\uv_{\pi}}
\newcommand{\restrict}[2]{{#1}_{|{#2}}}
\newcommand{\EOD}{\end{document}}
\newcommand{\Ndofs}{\NMB_{\textrm{dofs}}}
\newcommand{\Ph}{\mathcal{P}_{\hh}}
\newcommand{\TERM}[2]{\textsf{#1}_{#2}}
\newcommand{\bvarphi}{{\bm\phi}}
\newcommand{\beps}   {{\bm\varepsilon}}
\newcommand{\bsig}   {{\bm\sigma}}
\newcommand{\btau}   {{\bm\tau}}
\newcommand{\bPi} [1]{{\bm\Pi}^{0}_{#1}}
\newcommand{\bPix}[1]{{\bm\Pi}^{0,x}_{#1}}
\newcommand{\bPiy}[1]{{\bm\Pi}^{0,y}_{#1}}
\newcommand{\bPin}[1]{{\bm\Pi}^{\nabla}_{#1}}
\newcommand{\DIRI}{D}
\newcommand{\NEUM}{N}
\newcommand{\GamD}{\Gamma_{\DIRI}}
\newcommand{\GamN}{\Gamma_{\NEUM}}
\newcommand{\TRACE}{\textrm{tr}}
\newcommand{\gvN}{\gv_{\NEUM}}
\newcommand{\zero}{\mathbf{0}}
\newcommand{\symmSpace}{\REAL^{2\times2}_{\textrm{sym}}} 
\newcommand{\px}{\partial_x}
\newcommand{\py}{\partial_y}
\newcommand{\wsx}{\ws_x}
\newcommand{\wsy}{\ws_y}
\newcommand{\pxh}[1]{\widehat{\partial_x#1}}
\newcommand{\pyh}[1]{\widehat{\partial_y#1}}
\newcommand{\UP}{\textrm{up}}
\newcommand{\DW}{\textrm{down}}
\newcommand{\bphiUP}{\bvarphi^{\UP}}
\newcommand{\bphiDW}{\bvarphi^{\DW}}
\newcommand{\dofs}{\textrm{dofs}}
\newcommand{\poly}{k            }
\newcommand{\rhov}{\bm\rho}
\newcommand{\etav}{\bm\eta}
\newcommand{\PGRAPH}[1]{\medskip\noindent\textit{#1.}}
\newcommand{\psiv}{{\bm\psi}}
\newcommand{\psivI}{\psiv_{\INTP}}
\newcommand{\MM}{\rm M}
\newcommand{\KK}{\rm K}
\newcommand{\xx}{\mathbf{x}}
\newcommand{\UU}{\mathbf{U}}
\newcommand{\kk}{\mathbf{k}}
\begin{document}

\date{}

\begin{frontmatter}

  \title{The arbitrary-order virtual element method for linear elastodynamics models. Convergence, stability and dispersion-dissipation analysis}
  \author[MOXA]{P.~F.~Antonietti}
  \author[T5]  {, G.~Manzini}
  \author[MOXM]{, I.~Mazzieri}
  \author[T3]  {, H.~M.~Mourad}
  \author[MOXV]{, and M.~Verani}

  \address[MOXA]{
    MOX, Dipartimento di Matematica,
    Politecnico di Milano, Italy;
    \emph{e-mail: paola.antonietti@polimi.it}
  }
  \address[T5]{
    Group T-5,
    Theoretical Division,
    Los Alamos National Laboratory,
    Los Alamos, NM,
    USA;
    \emph{e-mail: gmanzini@lanl.gov}
  }
  \address[MOXM]{
    MOX, Dipartimento di Matematica,
    Politecnico di Milano, Italy;
    \emph{e-mail: ilario.mazzieri@polimi.it}
  }
  \address[T3]{
    Group T-3,
    Theoretical Division,
    Los Alamos National Laboratory,
    Los Alamos, NM,
    USA;
    \emph{e-mail: hmourad@lanl.gov}
  }
  \address[MOXV]{
    MOX, Dipartimento di Matematica,
    Politecnico di Milano, Italy;
    \emph{e-mail: marco.verani@polimi.it}
  }

  \begin{abstract}
    We design the conforming virtual element method for the numerical
    approximation of the two dimensional elastodynamics problem.
    We prove stability and convergence of the semi-discrete
    approximation and derive optimal error estimates under
    $\hh$-refinement in both the energy and the $L^2$ norms, and
    optimal error estimates under $\ps$-refinement in the energy norm.
    The performance of the proposed virtual element method is assessed
    on a set of different computational meshes, including non-convex
    cells up to order four in the $h$-refinement setting.
    Exponential convergence is also experimentally observed under
    $p$-refinement.
    Finally, we present a dispersion-dissipation analysis for both the
    semi-discrete and fully-discrete schemes, showing that polygonal
    meshes behave as classical simplicial/quadrilateral grids in terms
    of dispersion-dissipation properties.
  \end{abstract}

  \begin{keyword}
    virtual element method,
    polygonal meshes,
    elastodynamics,
    high-order methods
  \end{keyword}

\end{frontmatter}


\maketitle
\raggedbottom

\section{Introduction}
\label{sec1:introduction}

In recent years, numerical modeling of elastic waves propagation
problems through the elastodynamics equation has undergone a
constantly increasing interest in the mathematical and geophysics
engineering community.
One of the most employed numerical technique is the Spectral Element
Method that has successfully been applied to the elastodynamics
equation.\cite{komatitsch1999introduction,Faccioli1996}
Elastic waves propagation problems have been treated numerically by
applying the Discontinuous Galerkin (DG) and the Discontinuos Galerkin
Spectral Element
method\cite{Riviere-Wheeler:2003,Antonietti-AyusodeDios-Mazzieri-Quarteroni:2016,Antonietti-Mazzieri-Quarteroni-Rapetti:2012}
to the \emph{displacement formulation} and the \emph{stress-velocity
  formulation}.\cite{Dumbser-Kaser:2006}
High-order DG methods for elastic and elasto-acoustic wave propagation
problems have been extended to arbitrarily-shaped polygonal/polyhedral
grids\cite{Antonietti-Mazzieri:2018,Antonietti-Bonaldi-Mazzieri:2018} to
further enhance the geometrical flexibility of the DG approach while
guaranteeing low dissipation and dispersion errors. 
Recently, the lowest-order Virtual Element Method (\VEM{}) has been
applied for the solution of the elastodynamics equation on nonconvex
polygonal meshes.\cite{Park-Chi-Paulino:2019a,Park-Chi-Paulino:2019b}

Studying the elastodynamic behavior of structures with complicated
geometrical features is often of interest in many practical
situations, e.g., in aerospace and power-generation applications.
Traditionally, triangular -- and in 3D, tetrahedral -- elements have
been the only available option when it came to the spatial
discretization of such structures.
This is problematic because low-order triangles/tetrahedra are known
to over-estimate the structure's stiffness (and hence its natural
frequencies and the wave propagation speed within it), especially with
nearly-incompressible materials.
In addition, very small triangular elements are often needed to
resolve intricate features of the geometry, which then requires a very
small time step to be used in view of the CFL stability condition.
Polygonal elements can alleviate such difficulties, since they often
obviate the need to refine the spatial discretization even in the
presence of complicated/intricate geometrical features.
This is especially true if the underlying numerical method allows
\emph{non-convex} polygonal elements to be used efficiently, as is the
case with the virtual element method.

The VEM, that can be seen as a variational reformulation of the
\emph{nodal} mimetic finite difference (MFD)
method,\cite{BeiraodaVeiga-Lipnikov-Manzini:2014} was originally
proposed as a conforming method for solving elliptic boundary-value
problems on polytopal
meshes\cite{BeiraodaVeiga-Brezzi-Cangiani-Manzini-Marini-Russo:2013}
and later extended to nonconforming
formulations,\cite{AyusodeDios-Lipnikov-Manzini:2016,Cangiani-Manzini-Sutton:2017,Cangiani-Gyrya-Manzini:2016,Antonietti-Manzini-Verani:2018,Mascotto-Perugia-Pichler:2018}
and to higher-order differential
equations,\cite{Antonietti-Manzini-Verani:2018,Antonietti-Manzini-Verani:2019}
and to "p/hp" formulations, also using adaptive mesh
refinements.~\cite{BeiraodaVeiga-Manzini-Mascotto:2019}
The VEM provide a very effective and flexible setting for designing
arbitrary-order accurate numerical methods suited to polygonal and
polyhedral meshes with respect to straightforward generalization of
the finite element method as in the polygonal
FEM.\cite{Manzini-Russo-Sukumar:2014}
In the last few years, great amount of work has also been devoted to
the numerical modeling of linear and nonlinear elasticity problems and
materials.
For example, it is worth mentioning
the VEM for plate bending
problems\cite{Brezzi-Marini:2013,BeiraodaVeiga-Brezzi-Marini:2013}
and VEM for the stress-displacement formulation of plane elasticity
problems,\cite{Artioli-deMiranda-Lovadina-Patruno:2017}
plane elasticity problems based on the {H}ellinger-{R}eissner
principle,\cite{Artioli-deMiranda-Lovadina-Patruno:2018}
mixed virtual element method for a pseudostress-based formulation of
linear elasticity,\cite{Caceres-Gatica-Sequeira:2018}
nonconforming virtual element method for elasticity
problems,\cite{Zhang-Zhao-Yang-Chen:2018}
linear\cite{Gain-Talischi-Paulino:2014} and nonlinear
elasticity,\cite{DeBellis-Wriggers-Hudobivnik:2019}
contact problems\cite{Wriggers-Rust-Reddy:2016} and frictional
contact problems including large
deformations,\cite{Wriggers-Rust:2019}
elastic and inelastic problems on polytope
meshes,\cite{BeiraodaVeiga-Lovadina-Mora:2015}
compressible and incompressible finite
deformations,\cite{Wriggers-Reddy-Rust-Hudobivnik:2017}
finite elasto-plastic
deformations,\cite{Chi-BeiraodaVeiga-Paulino:2017,Wriggers-Hudobivnik:2017,Hudobivnik-Aldakheel-Wriggers:2018}
virtual element method for coupled thermo-elasticity in
Abaqus,\cite{Dhanush-Natarajan:2018}
a priori and a posteriori error estimates for a virtual element
spectral analysis for the elasticity
equations,\cite{Mora-Rivera:2018}
virtual element method for transversely isotropic
elasticity.\cite{Reddy-vanHuyssteen:2019}
In this paper, we are aimed at investigating both theoretically and
numerically the performance of the high-order Virtual Element Method
for the numerical modeling of wave propagation phenomena in elastic
media.
In particular, we prove the stability and the convergence of the
semi-discrete approximation in the energy norm and derive error
estimates.
The performance of the method, in terms of convergence, stability and
dispersion-dissipation analysis, is assessed on a set of different
computational meshes.
Exponential convergence is also experimentally seen in the
$p$-refinement setting.
\BLUE{With the aim of focusing the reader on the novelties of
  the present work, we note that at the time of writing, only one work
  has been published on a somewhat similar, though simpler problem,
  namely the numerical approximation of the wave equation in the
  framework of the conforming virtual element method using the
  elliptic projection operator to define the stiffness
  matrix.\cite{Vacca:2016}}
However, our work is substantially different since we consider a more
complex mathematical model, a diverse range of applications, a
different discretization of the stiffness matrix, which is here based
on orthogonal projections of the simmetric gradients and not on the
elliptic projector, and, consequently, a substantially different
strategy for the analysis both in $\LTWO$ and energy
norm.
\BLUE{Moreover, the dispersion-dissipation analysis presented in the
  last part of the our work is specifically carried out for the elastodynamics problem
  here considered.}

\medskip
The outline of the paper is as follows.
We conclude this introductory section with
a subsection introducing the
notation used in this paper.
Then, in Section~\ref{sec2:model}, we introduce the model problem and
its virtual element approximation.
In Section~\ref{sec3:vem}, we present the design of the VEM.
In Section~\ref{sec4:convergence}, we discuss the convergence of the VEM and
derive suitable bounds for the approximation error.
In Section~\ref{sec5:numerical}, we investigate the performance of the
method on a set of suitable numerical experiments.
In particular, in Section~\ref{subsec51:manuf_soln} we show the
optimal convergence properties of the VEM by using a manufactured
solution on three mesh families with different difficulties, and in
Section~\ref{subsec52:diss-disp}, we carry out an experimental
investigation \emph{\`a la Von Neumann} on the stability and
dispersion-dissipation properties of the method.
In Section~\ref{sec6:conclusion}, we offer our final remarks and
conclusions.

\subsection{Notation of functional spaces and technicalities}
We use the standard definitions and notation of Sobolev spaces, norms
and seminorms.~\cite{Adams-Fournier:2003}
Let $k$ be a nonnegative integer number.
The Sobolev space $\HS{k}(\omega)$ consists of all square integrable
functions with all square integrable weak derivatives up to order $k$
that are defined on the open bounded connected subset $\omega$ of
$\REAL^{2}$.
As usual, if $k=0$, we prefer the notation $\LTWO(\omega)$.
Norm and seminorm in $\HS{k}(\omega)$ are denoted by
$\norm{\cdot}{k,\omega}$ and $\snorm{\cdot}{k,\omega}$, respectively,
and $(\cdot,\cdot)_{\omega}$ denote the $\LTWO$-inner product.
We omit the subscript $\omega$ when $\omega$ is the whole
computational domain $\Omega$. 

Given the mesh partitioning $\Th=\{\P\}$ of the domain $\Omega$ into
elements $\P$, we define the broken (scalar) Sobolev space for any
integer $k>0$
\begin{align*}
  \HS{k}(\Th) 
  = \prod_{\P\in\Th}\HS{k}(\P) 
  = \big\{\,\vs\in\LTWO(\Omega)\,:\,\restrict{\vs}{\P}\in\HS{k}(\P)\,\big\}, 
\end{align*}
which we endow with the broken $\HS{k}$-norm
\begin{align}
  \label{eq:Hs:norm-broken}
  \norm{\vs}{k,\hh}^2 = \sum_{\P\in\Th}\norm{\vs}{k,\P}^{2}
  \qquad\forall\,\vs\in\HS{k}(\Th),
\end{align}
and, for $k=1$, with the broken $\HONE$-seminorm
\begin{align}
  \label{eq:norm-broken}
  \snorm{\vs}{1,h}^2 = \sum_{\P\in\Th}\norm{\nabla\vs}{0,\P}^{2} 
  \qquad\forall\,\vs\in\HONE(\Th).
\end{align}

We denote the linear space of polynomials of degree up to $\ell$
defined on $\omega$ by $\PS{\ell}(\omega)$, with the useful
conventional notation that $\PS{-1}(\omega)=\{0\}$.
We denote the space of two-dimensional vector polynomials of degree up
to $\ell$ on $\omega$ by $\big[\PS{\ell}(\omega)\big]^2$; the space of
symmetric $2\times2$-sized tensor polynomials of degree up to $\ell$
on $\omega$ by $\PS{\ell,\textrm{sym}}^{2\times2}(\omega)$.
Space $\PS{\ell}(\omega)$ is the span of the finite set of
\emph{scaled monomials of degree up to $\ell$}, that are given by
\begin{align*}
  \calM_{\ell}(\omega) =
  \bigg\{\,
    \left( \frac{\xv-\xv_{\omega}}{\hh_{\omega}} \right)^{\alpha}
    \textrm{~with~}\abs{\alpha}\leq\ell
    \,\bigg\},
\end{align*}
where 
\begin{itemize}
\item $\xv_{\omega}$ denotes the center of gravity of $\omega$ and
  $\hh_{\omega}$ its characteristic length, as, for instance, the edge
  length or the cell diameter for $\DIM=1,2$;
\item $\alpha=(\alpha_1,\alpha_2)$ is the
  two-dimensional multi-index of nonnegative integers $\alpha_i$
  with degree $\abs{\alpha}=\alpha_1+\alpha_{2}\leq\ell$ and
  such that
  $\xv^{\alpha}=\xs_1^{\alpha_1}\xs_{2}^{\alpha_{2}}$ for
  any $\xv\in\REAL^{2}$.
\end{itemize}
We will also use the set of \emph{scaled monomials of degree exactly
  equal to $\ell$}, denoted by $\calM_{\ell}^{*}(\omega)$ and obtained
by setting $\abs{\alpha}=\ell$ in the definition above.

\medskip
Finally, we use the letter $C$ in the error estimates to denote a
strictly positive constant whose value can change at any instance and
that is independent of the discretization parameters such as the mesh
size $\hh$.
Note that $C$ may depend on the constants of the model equations or
the variational problem, like the coercivity and continuity constants,
or even constants that are uniformly defined for the family of meshes
of the approximation while $\hh\to0$, such as the mesh regularity
constant, the stability constants of the discrete bilinear forms, etc.
Whenever it is convenient, we will simplify the notation by using
expressions like $x\lesssim y$ and $x\gtrsim y$ to mean that $x\leq
Cy$ and $x\geq Cy$, respectively, $C$ being the generic constant in
the sense defined above.
\section{Model problem and virtual element formulation}
\label{sec2:model}

We consider an elastic body occupying the open, bounded polygonal
domain denoted by $\Omega\in\REAL^2$ with boundary denoted by
$\Gamma=\partial\Omega$.
We assume that boundary $\Gamma$ can be split into the two disjoint
subsets $\GamD$ and $\GamN$, so that
$\overline{\Gamma}=\overline{\Gamma}_{\DIRI}\cup\overline{\Gamma}_{\NEUM}$
and $\GamD\cap\GamN=\emptyset$.
For the well-posedness of the mathematical model, we further require
that the one-dimensional Lebesgue measure (length) of $\GamD$ is
nonzero, i.e., $\ABS{\GamD}>0$.
Let $T>0$ denote the final time.
We consider
the external load $\fv\in\LTWO((0,T);[\LTWO(\Omega)]^2)$,
the boundary function $\gvN\in\CS{1}((0,T);[H^{\frac{1}{2}}_{0,\GamN}]^2)$,
and the initial functions $\uv_{0}\in[H^{1}_{0,\GamD}(\Omega)]^2$,
$\uv_{1}\in[L^2(\Omega)]^2$.
For such time-dependent vector fields, we may indicate the dependence
on time explicitly, e.g., $\fv(t):=\fv(\cdot,t)\in[\LTWO(\Omega)]^2$,
or drop it out to ease the notation when it is obvious from the
context.

The equations governing the two-dimensional initial/boundary-value
problem of linear elastodynamics for the displacement vector
$\uv:\Omega\times[0,T]\to\REAL^2$ are:
\begin{align}
  \rho\ddot{\uv} - \nabla\cdot\bsig(\uv)
  &= \fv\phantom{\zero\gvN\uv_{0}\uv_{1}}\textrm{in}~\Omega\times(0,T],\label{eq:pblm:strong:A}\\[0.5em]
  \uv
  &= \zero\phantom{\fv\gvN\uv_{0}\uv_{1}}\textrm{on}~\GamD \times(0,T],\label{eq:pblm:strong:B}\\[0.5em]
  \bsig(\uv)\nv
  &= \gvN\phantom{\fv\zero\uv_{0}\uv_{1}}\textrm{on}~\GamN \times(0,T],\label{eq:pblm:strong:C}\\[0.5em]
  \uv
  &= \uv_{0}\phantom{\fv\zero\gvN\uv_{1}}\textrm{in}~\Omega\times\{0\},\label{eq:pblm:strong:D}\\[0.5em]
  \dot{\uv} &=
  \uv_{1}\phantom{\fv\zero\gvN\uv_{0}}\textrm{in}~\Omega\times\{0\}.\label{eq:pblm:strong:E}
\end{align}
Here, $\rho$ is the mass density, which we suppose to be a strictly
positive and uniformly bounded function and $\bsig(\uv)$ is the stress
tensor.
In~\eqref{eq:pblm:strong:B} we assume homogeneous Dirichlet boundary
conditions on $\GamD$.
This assumption is made only to ease the exposition and the analysis,
as our numerical method is easily extendable to treat the case of
nonhomogeneous Dirichlet boundary conditions.

\medskip
\noindent
We denote the space of symmetric $2\times2$-sized real-valued tensors
by $\symmSpace$ and assume that the stress tensor
$\bsig:\Omega\times[0,T]\to\symmSpace$ is expressed, according to
Hooke's law, by $\bsig(\uv) = \calD\beps(\uv)$,
where, $\beps(\uv)$ denotes the symmetric gradient of $\uv$ , i.e.,
$\beps(\uv)=\big(\nabla\uv+(\nabla\uv)^{T}\big)\slash{2}$, and
$\calD=\calD(\xv)\,\,:\symmSpace\longrightarrow\symmSpace$
is the \emph{stiffness} tensor
\begin{align}
  \calD\btau = 2\mu\btau + \lambda\TRACE(\btau)\matI
  \label{eq:bsig:def}
\end{align}
for all $\btau\in\REAL^{2\times2}_{\textrm{sym}}$.
In this definition, $\matI$ and $\TRACE(\cdot)$ are the identity
matrix and the trace operator; $\lambda$ and $\mu$ are the first and
second Lam\'e coefficients, which we assume to be in $\LINF(\Omega)$
and nonnegative.
The compressional (P) and shear (S) wave velocities of the medium are
respectively obtained through the relations $c_P = \sqrt{(\lambda +
  2\mu)/\rho}$ and $c_S = \sqrt{\mu/\rho}$.

\medskip
\noindent
Let $\Vv=\big[\HONEgm(\Omega)\big]^2$ be the space of $H^1$ vector
valued functions with null trace on $\Gamma_D$.
We consider the two bilinear forms
$\ms(\cdot,\cdot),\,\as(\cdot,\cdot)\,:\,\Vv\times\Vv\to\REAL$ defined
as
\begin{align}
  \ms(\wv,\vv) &= \int_{\Omega}\rho\wv\cdot\vv\dV       \quad\forall\wv,\,\vv\in\Vv,\label{eq:ms:def}\\[0.5em]
  \as(\wv,\vv) &= \int_{\Omega}\bsig(\wv):\beps(\vv)\dV \quad\forall\wv,\,\vv\in\Vv,\label{eq:as:def}\\[0.5em]
  \intertext{and the linear functional $\Fs(\cdot)\,:\,\Vv\to\REAL$ defined
    as}
  \Fs(\vv) &= \int_{\Omega}\fv\cdot\vv\dV + \int_{\GamN}\gvN\cdot\vv\quad\forall\vv\in\Vv.\label{eq:Fv:def}
\end{align}

The variational formulation of the linear elastodynamics equations
reads as: \emph{For all $t\in(0,T]$ find $\uv(t)\in\Vv$ such that for
  $t=0$ it holds that $\uv(0)=\uv_0$ and $\dot{\uv}(0)=\uv_1$ and}
\begin{align}
  \ms(\ddot{\uv},\vv) + \as(\uv,\vv) &= \Fs(\vv)\qquad\forall\vv\in\Vv.
  \label{eq:weak:A}
\end{align}
As shown, for example, by Raviart and Thomas (see Theorem~8-3.1\cite{Raviart-Thomas:1983})
the variational problem \eqref{eq:weak:A} is well posed and its unique
solution satisfies
$\uv\in\CS{0}((0,T];\Vv)\cap\CS{1}((0,T];[\LTWO(\Omega)]^2)$.

\medskip
\noindent
The virtual element approximation of problem~\eqref{eq:weak:A} relies
on the virtual element space $\Vvhk$, which is a subspace of
$\big[\HONEgm(\Omega)\big]^2$.
Space $\Vvhk$ is built upon the scalar conforming
space.\cite{BeiraodaVeiga-Brezzi-Cangiani-Manzini-Marini-Russo:2013}
For the sake of completeness, we briefly review the construction of
the scalar space in the next section.
Then, we consider
the virtual element bilinear forms $\msh(\cdot,\cdot)$ and
$\ash(\cdot,\cdot)$, which approximate the bilinear forms
$\ms(\cdot,\cdot)$ and $\as(\cdot,\cdot)$,
and the virtual element functional $\Fsh(\cdot)$, which approximates
the linear functional $\Fs(\cdot)$.
The definition of $\msh(\cdot,\cdot)$, $\ash(\cdot,\cdot)$, and
$\Fsh(\cdot)$ and the discussion of their properties is addressed in
the next section.

\medskip
\noindent
The semi-discrete virtual element approximation of~\eqref{eq:weak:A} read
as: \emph{For all $t\in(0,T]$ find $\uvh(t)\in\Vvhk$ such that for
  $t=0$ it holds that $\uvh(0)=(\uv_{0})_{\INTP}$ and
  $\dot{\uv}_{\hh}(0)= (\uv_{1})_{\INTP}$ and}
\begin{align}
  \msh(\ddot{\uv}_h,\vvh) 
  + \ash(\uvh,\vvh) = \Fsh(\vvh)
  \quad\forall\vvh\in\Vvhk.
  \label{eq:VEM:semi-discrete}
\end{align}
Here, 
$\uvh(t)$ is the virtual element approximation of $\uv$ and $\vvh$ the
generic test function in $\Vvhk$, while $(\uv_{0})_{\INTP}$ and
$(\uv_{1})_{\INTP}$ are the virtual element interpolants of the
initial solution functions $\uv(0)$ and $\dot{\uv}(0)$, respectively.

Time integration is performed by applying the leap-frog time marching
scheme~\cite{Quarteroni-Sacco-Saleri:2007} to the second derivative in
time $\ddot{\uv}_{\hh}$.
To this end, we subdivide the interval $(0,T]$ into $N_T$ subintervals
of amplitude $\Delta t=T\slash{N_T}$ and at every time level
$t^n=n\Delta t$ we consider the variational problem for $n\geq1$:
\begin{align}
  \msh(\uvh^{n+1},\vvh) 
  - 2\msh(\uvh^{n},\vvh) 
  +  \msh(\uvh^{n-1},\vvh) 
  + \Delta t^2\ash(\uvh^{n},\vvh)  
  = \Delta t^2\Fsh^{n}(\vvh)
  \quad\forall\vvh\in\Vhk,
  \label{eq:VEM:fully-discrete}
\end{align}
and initial step
\begin{align*}
  \msh(\uvh^{1},\vvh) 
  - \msh(\uv_{0},\vvh) 
  - \Delta t  \msh(\uv_{1},\vvh) 
  + \frac{\Delta t^2}{2}\ash(\uv_{0},\vvh)  
  = \frac{\Delta t^2}{2}\Fsh^{0}(\vvh)
  \quad\forall\vvh\in\Vhk.
\end{align*}
The leap-frog scheme is second-order accurate, explicit and
conditionally stable.~\cite{Quarteroni-Sacco-Saleri:2007}
It is straightforward to show that these properties are inherited by
the fully-discrete scheme~\eqref{eq:VEM:fully-discrete}.
\section{Virtual Element Approximation}
\label{sec3:vem}

The VEM proposed in this paper is a vector extension of the VEM
previously developed for scalar elliptic and parabolic
problems,~\cite{BeiraodaVeiga-Brezzi-Cangiani-Manzini-Marini-Russo:2013,Ahmad-Alsaedi-Brezzi-Marini-Russo:2013,AyusodeDios-Lipnikov-Manzini:2016,Cangiani-Manzini-Sutton:2017}
that we shortly review in this section.
First, we introduce the family of mesh decompositions of the
computational domain and the mesh regularity assumptions needed to
prove the stability and convergence of the method.
Then, we formulate the conforming virtual element spaces of various
degree $k$ and present the degrees of freedom that are unisolvent in
such spaces.
Finally, we present the definition of the virtual element bilinear
forms and discuss their properties.

\subsection{Mesh definition and regularity assumptions}
\label{subsec:vem:meshes}
Let $\mathcal{T}=\{\Th\}_{\hh}$ be a family of decompositions of
$\Omega$ into nonoverlapping polygonal elements $\P$ with
nonintersecting boundary $\partial\P$, center of gravity $\xvP$,
two-dimensional measure (area) $\mP$, and diameter
$\hP=\sup_{\xv,\yv\in\P}\abs{\xv-\yv}$.
The subindex $\hh$ that labels each mesh $\Th$ is the maximum of the
diameters $\hP$ of the elements of that mesh.
The boundary of $\P$ is formed by straight edges $\E$ and the midpoint
and length of each edge are denoted by $\xvE$ and $\hE$, respectively.

We denote the unit normal vector to the elemental boundary
$\partial\P$ by $\norP$, and the unit normal vector to edge $\E$ by
$\norE$.
Vector $\norP$ points out of $\P$ and the orientation of $\norE$
is fixed \emph{once and for all} in every mesh

\medskip 
Now, we state the mesh regularity assumptions that are required for
the convergence analysis.

\medskip
\noindent
\ASSUM{A0}~\textbf{Mesh regularity assumptions}.

\begin{itemize}
  \medskip
\item 
  There exists a positive constant $\varrho$ independent of $\hh$
  (and, hence, of $\Th$) such that for every polygonal element
  $\P\in\Th$ it holds that
  \begin{description}
  \item[$(i)$] $\P$ is star-shaped with respect to a disk with radius
    $\ge\varrho\hP$;
  \item[$(ii)$] for every edge $\E\in\partial\P$ it holds that
    $\hE\geq\varrho\hP$.
  \end{description}
\end{itemize}

\medskip
\begin{remark}
  Star-shapedness property $(i)$ implies that the elements are
  \emph{simply connected} subsets of $\REAL^{2}$.
  Scaling property $(ii)$ implies that the number of edges in each
  elemental boundary is uniformly bounded over the whole mesh family
  $\mathcal{T}$.
\end{remark}

\medskip
To conclude this section, we note that the above mesh assumptions are
very general and, as observed from the first publications on the
VEM,\cite{BeiraodaVeiga-Brezzi-Cangiani-Manzini-Marini-Russo:2013}
allow us to formulate the VEM on grids of polygonal elements having
very general geometric shapes, e.g., nonconvex elements or elements
with hanging nodes.
However, virtual element formulations under weaker mesh assumptions
have been the object of recent
investigations.\cite{Beirao-Lovadina-Russo:2016,Brenner-Sung:2018,Cao-Chen:2018}
%

\subsection{Virtual element space, degrees of freedom and projection
  operators}
The global virtual element space is defined as
\begin{align}
  \Vvhk := \Big\{\vv\in\Vv\,:\,\vv_{|_\P}\in\Vvhk(\P)\,\textrm{for~every~}\P\in\Th\Big\}.
\end{align}
The construction of the local virtual element space is carried out
along these three steps: $(i)$ we select the set of degrees of freedom
that uniquely characterizes the functions of the local space; $(ii)$
we introduce the elliptic projector onto the subspace of polynomials;
$(iii)$ we define the virtual element functions as the solution of a
differential problem, also using the elliptic projector.

\medskip
\noindent
\begin{figure}[!t]
  \centering
  \begin{tabular}{ccc}
    \includegraphics[scale=0.25]{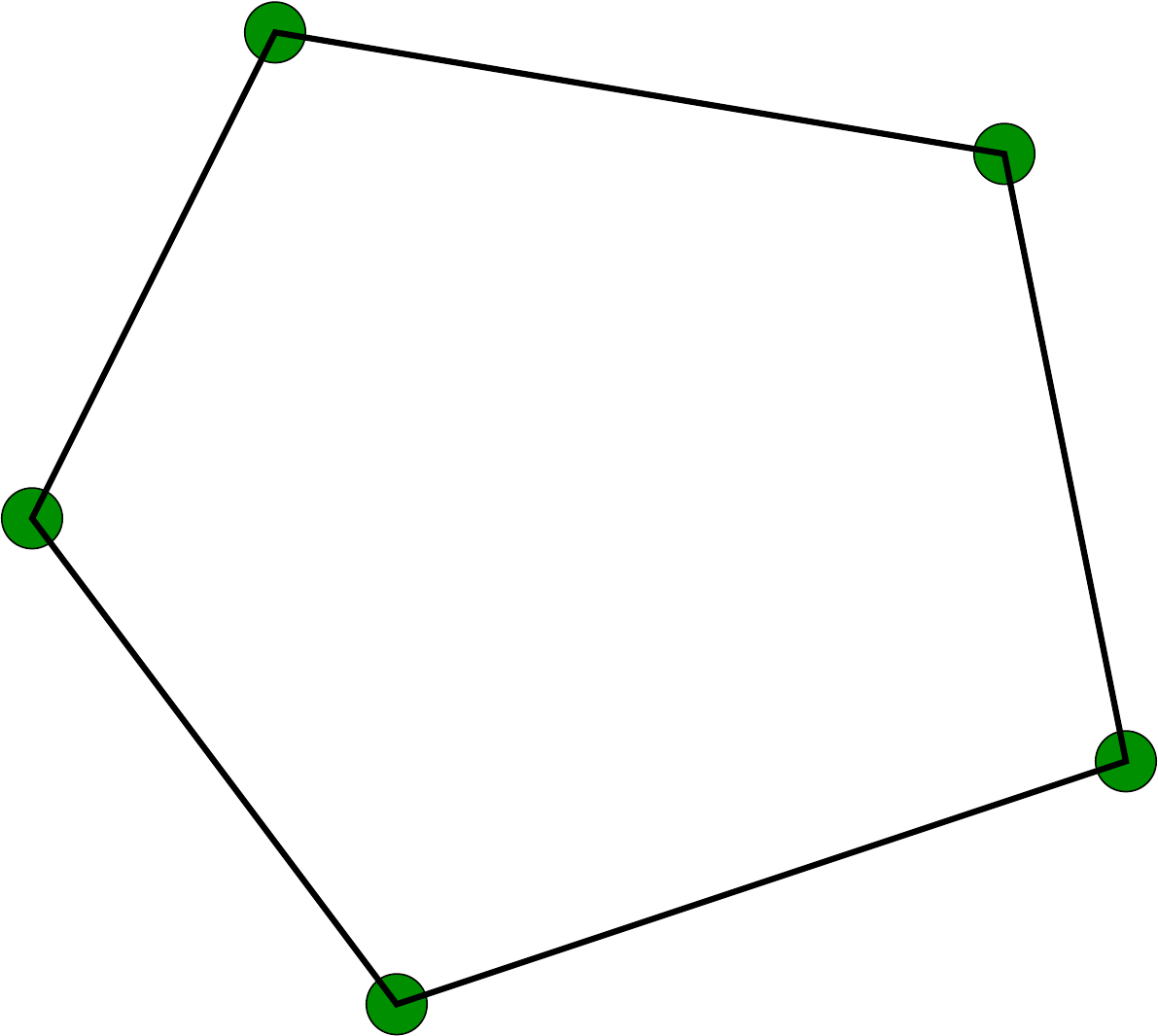}&\qquad\quad
    \includegraphics[scale=0.25]{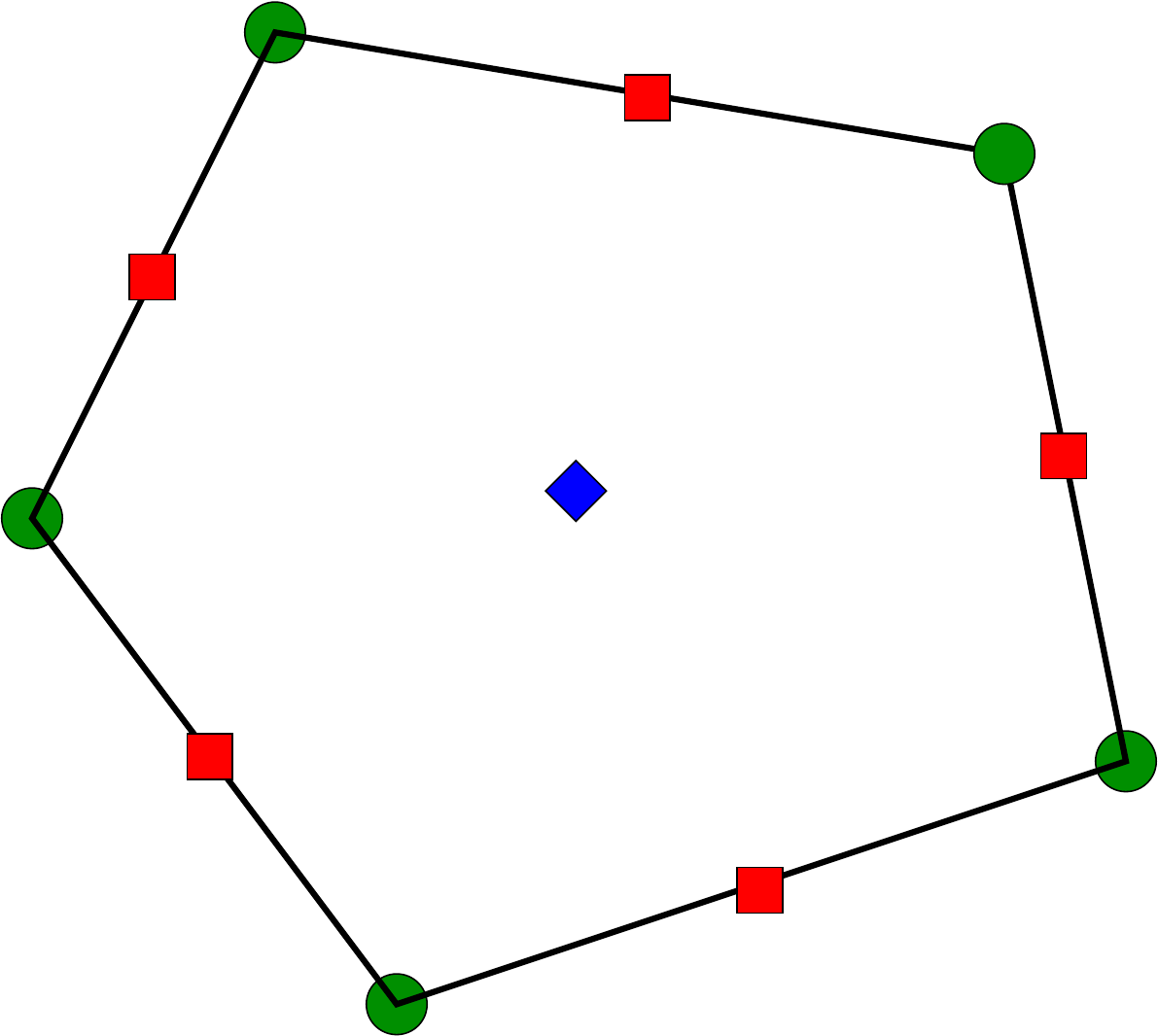}&\qquad\quad
    \includegraphics[scale=0.25]{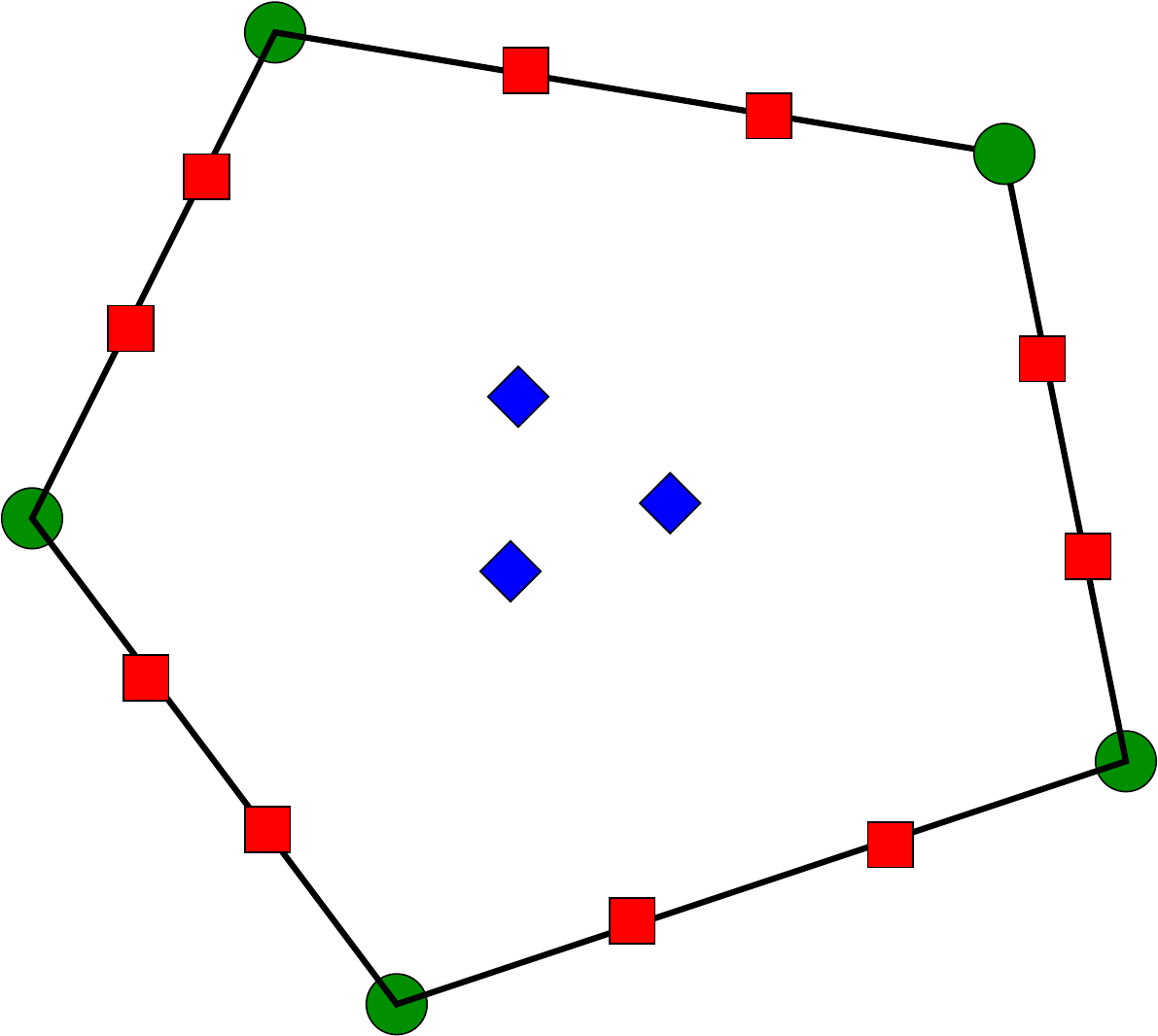}\\[0.75em]  
    \textbf{(k=1)} & \textbf{(k=2)} & \textbf{(k=3)}\\[1.5em]
  \end{tabular}
  \caption{The degrees of freedom of the scalar conforming virtual
    element spaces $\Vhk(\P)$ defined on the pentagonal cell $\P$ for
    $k=1,2,3$.}
  \label{fig:dofs}
\end{figure}
Let us move to the first step.
\begin{itemize}
\item Each virtual element function $\vsh$ is uniquely characterized
  by
  \begin{description}
  \item[]\textbf{(C1)} the values of $\vsh$ at the vertices of $\P$;
  \item[]\textbf{(C2)} the moments of $\vsh$ of order up to $k-2$ on
    each one-dimensional edge $\E\in\partial\P$:
    \begin{equation}\label{eq:CF:dofs:01}
      \frac{1}{\mE}\int_{\E}\vsh\,\ms\dS,
      \,\,\forall\ms\in\calM_{k-1}(\E),\,
      \forall\E\in\partial\P;
    \end{equation}
  \item[]\textbf{(C3)} the moments of $\vsh$ of order up to $k-2$ on
    $\P$:
    \begin{equation}\label{eq:CF:dofs:02}
      \frac{1}{\mP}\int_{\P}\vsh\,\ms\dS,
      \,\,\forall\ms\in\calM_{k-2}(\P).
    \end{equation}
  \end{description}
\end{itemize}
Figure~\ref{fig:dofs} shows the degrees of freedom of the three scalar
conforming virtual element spaces defined on a pentagonal cell for
$k=1,2,3$.
Since we assume that $\Vvhk(\P)=\big[\Vhk(\P)\big]^2$, $\Vhk(\P)$
being the local scalar conforming virtual element space, the degrees
of freedom of each component of the vector-valued functions $\vvh$ are
those described above.

\medskip
In the second step, we introduce the elliptic projection operator
$\Pin{k}:\HONE(\P)\cap\CS{0}(\overline{\P})\to\PS{k}(\P)$, so that the
elliptic projection of a function $\vsh$ is the polynomial of degree
$k$ that satisfies the variational problem given by
\begin{align}
  \int_{\P}\nabla\Pin{k}\vsh\cdot\nabla\qs\dV = \int_{\P}\nabla\vsh\cdot\nabla\qs\dV
  \qquad\forall\qs\in\PS{k}(\P),
\end{align}
with the additional condition that
\begin{align}
  \int_{\partial\P}\Pin{1}\vsh\dV = \int_{\partial\P}\vsh\dV &\qquad\textrm{for~}k=1,\\[0.5em]
  \int_{\P}\Pin{k}\vsh\dV        = \int_{\P}\vsh\dV        &\qquad\textrm{for~}k\geq2.
\end{align}
The crucial property is that $\Pin{k}\vsh$ is computable using only
the information on $\vsh$ provided by its degrees of freedom, i.e.,
the values of the linear functionals \textbf{(C1)}-\textbf{(C3)}.

\medskip 
In the third and final step, we define the conforming virtual element
space of order $k\geq1$ by
\begin{align}
  \Vhk(\P) := \Big\{
  \vsh\in\HONE(\P)\,:\,
  &
  \vs_{\hh|_{\partial\P}}\in\CS{}(\partial\P),\,
  \vs_{\hh|_\E}\in\PS{k}(\E)\,\,\forall\E\in\partial\P,\,\nonumber\\[0.25em]
  &\Delta\vsh\in\PS{k}(\P),\,
  \big(\vsh-\Pin{k}\vsh,\mu_{\hh}\big)_{\P}=0
  \quad\forall\mu_{\hh}\in\PS{k}(\P)\backslash{\PS{k-2}(\P)}
  \Big\}.
\end{align}

\begin{remark}
  A few remarkable facts characterize these scalar functional spaces:
  \begin{description}
  \item[$(i)$] their definition resorts to the \emph{enhancement
      strategy}\cite{Ahmad-Alsaedi-Brezzi-Marini-Russo:2013} for the
    conforming VEM;
  \item[$(ii)$] the degrees of freedom \textbf{(C1)}-\textbf{(C3)} are
    unisolvent~\cite{BeiraodaVeiga-Brezzi-Cangiani-Manzini-Marini-Russo:2013}
    in $\Vhk(\P)$;
  \item[$(iii)$] the scalar polynomial space $\PS{k}(\P)$ is a subset
    of $\Vhk(\P)$;
  \item[$(iv)$] the $\LTWO$-orthogonal projections
    $\Piz{k}\vsh\in\PS{k}(\P)$ and
    $\Piz{k-1}\nabla\vsh\in\big[\PS{k-1}(\P)\big]^{2}$ are computable
    for all $\vsh\in\Vhk$ using only the degrees of freedom of $\vsh$.\cite{Ahmad-Alsaedi-Brezzi-Marini-Russo:2013} 
  \end{description}
\end{remark}

\medskip
From $(iv)$, we readily find that for all $\vvh\in\Vvhk$ the
$\LTWO$-orthogonal projections $\Piz{k}\vvh\in\big[\PS{k}(\P)\big]^2$
and $\Piz{k-1}\beps(\vvh)\in\PS{k-1,\textrm{sym}}^{2\times2}(\P)$ are
also computable (without any approximation) using only the degrees of
freedom of $\vvh$.
In particular, the latter one is the solution of the finite
dimensional variational problem:
\begin{align}
  \int_{\P}\Piz{k-1}(\beps(\vvh)):\beps^{p}\dV =
  \int_{\P}\beps(\vvh):\beps^{p}\dV
  \qquad\forall\beps^{p}\in\PS{k-1,\textrm{sym}}^{2\times2}(\P),
\end{align}
i.e., the $\LTWO$-projection of the symmetric gradient $\beps(\vvh)$
onto $\PS{k-1,\textrm{sym}}^{2\times2}(\P)$, that (we recall) is the
space of symmetric $2\times2$-sized tensor-valued polynomials of
degree up to $k-1$.

\medskip 
Finally, the degrees of freedom of the global space $\Vvhk$ are
provided by collecting all the local degrees of freedom.
We note that the vertex values are the same for all the elements to
which a vertex belongs.
Similarly, the edge values at internal edges (i.e., edges shared by
two mesh elements) are the same for the two elements to which an edge
belongs.
The unisolvence of such degrees of freedom for the global space
$\Vvhk$ is an immediate consequence of the unisolvence of the local
degrees of freedom for the elemental spaces $\Vhk(\P)$.

\subsection{Virtual element bilinear forms}
In the virtual element setting, we define the bilinear forms
$\msh(\cdot,\cdot)$ and $\ash(\cdot,\cdot)$ as the sum of elemental
contributions, which are denoted by $\mshP(\cdot,\cdot)$ and
$\ashP(\cdot,\cdot)$, respectively:
\begin{align}
  \msh(\cdot,\cdot)&\,:\,\Vvhk\times\Vvhk\to\REAL,
  \quad\textrm{with}\quad\msh(\vvh,\wvh) = \sum_{\P\in\Th}\mshP(\vvh,\wvh),\nonumber\\[0.25em]
  \ash(\cdot,\cdot)&\,:\,\Vvhk\times\Vvhk\to\REAL,
  \quad\textrm{with}\quad\ash(\vvh,\wvh) = \sum_{\P\in\Th}\ashP(\vvh,\wvh).
\end{align}

The local bilinear form $\mshP(\cdot,\cdot)$ is given by
\begin{align}
  \mshP(\vvh,\wvh) 
  = \int_{\P}\rho\Piz{k}\vvh\cdot\Piz{k}\wvh\dV 
  + \SP_{m}(\vvh,\wvh),
  \label{eq:mshP:def}
\end{align}
where $\SP_{m}(\cdot,\cdot)$ is the local stabilization term.
The bilinear form $\mshP$ depends on the orthogonal projections
$\Piz{k}\vvh$ and $\Piz{k}\wvh$, which are computable from the degrees
of freedom of $\vvh$ and $\wvh$, respectively, see the previous
section.
The local form $\SP_{m}(\cdot,\cdot)\,:\,\Vvhk\times\Vvhk\to\REAL$ can
be \emph{any} symmetric and coercive bilinear form that is computable
from the degrees of freedom and for which there exist two strictly
positive real constants $\sigma_*$ and $\sigma^*$ such that
\begin{align}
  \sigma_*\msP(\vvh,\vvh)\leq\SP_{m}(\vvh,\vvh)\leq\sigma^*\msP(\vvh,\vvh)
  \quad\vvh\in\textrm{ker}\big(\Piz{k}\big)\cap\Vvhk(\P).
\end{align}
We can define computable stabilizations $\SP_{m}(\cdot,\cdot)$ by
resorting to the two-dimensional implementations of the effective
choices for the scalar case proposed in the
literature.\cite{Mascotto:2018,Dassi-Mascotto:2018}
The one used in our implementation of the method is discussed in
subsection~\ref{subsec:vem:implementation}.

The discrete bilinear form $\mshP(\cdot,\cdot)$ satisfies the following two
fundamental properties:
\begin{description}
\item[-] {\emph{$k$-consistency}}: for all $\vvh\in\Vvhk$ and for all
  $\qv\in\big[\PS{k}(\P)\big]^2$ it holds
  \begin{align}
    \label{eq:msh:k-consistency}
    \mshP(\vvh,\qv) = \msP(\vvh,\qv);
  \end{align}
\item[-] {\emph{stability}}: there exist two positive constants
  $\mu_*,\,\mu^*$, independent of $\hh$ and $\P$, such that
  \begin{align}
    \label{eq:msh:stability}
    \mu_*\msP(\vvh,\vvh)
    \leq\mshP(\vvh,\vvh)
    \leq\mu^*\msP(\vvh,\vvh)\quad\forall\vvh\in\Vhk.
  \end{align}
  The constants $\mu_*$ and $\mu^*$ may depend on upper and
    lower bounds of mass density factor $\rho$, cf.~\eqref{eq:ms:def},
    and the polynomial degree $k$.
\end{description}

\medskip
The local bilinear form $\ashP$ is given by
\begin{align}
  \ashP(\vvh,\wvh) 
  = \int_{\P}\matD\Piz{k-1}(\beps(\vvh)):\Piz{k-1}(\beps(\wvh))\dV
  + \SP_a(\vvh,\wvh),
  \label{eq:ashP:def}
\end{align}
where $\SP_{a}(\cdot,\cdot)$ is the local stabilization term.
The bilinear form $\ashP$ depends on the orthogonal projections
$\Piz{k-1}\nabla\vvh$ and $\Piz{k-1}\nabla\wvh$, which are computable
from the degrees of freedom of $\vvh$ and $\wvh$, respectively, see
the previous section.
$\SP_{a}(\cdot,\cdot)\,:\,\Vvhk\times\Vvhk\to\REAL$ and can be
\emph{any} symmetric and coercive bilinear form that is computable
from the degrees of freedom and for which there exist two strictly
positive real constants $\overline{\sigma}_*$ and
$\overline{\sigma}^*$ such that
\begin{align}
  \overline{\sigma}_*\asP(\vvh,\vvh)
  \leq\SP_{m}(\vvh,\vvh)\leq
  \overline{\sigma}^*\asP(\vvh,\vvh)
  \quad\vvh\in\textrm{ker}\big(\Piz{k}\big)\cap\Vvhk(\P).
\end{align}
Note that $\SP_a(\cdot,\cdot)$ must scale like $\asP(\cdot,\cdot)$,
i.e., as $\mathcal{O}(1)$.
We can define computable stabilizations $\SP_{a}(\cdot,\cdot)$ by
resorting to the two-dimensional implementations of the effective
choices for the scalar case proposed in the
literature.~\cite{Mascotto:2018,Dassi-Mascotto:2018}
The one used in our implementations of the method is discussed in
subsection~\ref{subsec:vem:implementation}.
  
\medskip
The discrete bilinear form $\ashP(\cdot,\cdot)$ satisfies the two
fundamental properties:
\begin{description}
\item[-] {\emph{$k$-consistency}}: for all $\vvh\in\Vvhk$ and for all
  $\qv\in\big[\PS{k}(\P)\big]^2$ it holds
  \begin{align}
    \label{eq:k-consistency}
    \ashP(\vvh,\qv) = \asP(\vvh,\qv);
  \end{align}
\item[-] {\emph{stability}}: there exist two positive constants
  $\alpha_*,\,\alpha^*$, independent of $\hh$ and $\P$, such that
  \begin{align}
    \label{eq:ash:stability}
    \alpha_*\asP(\vvh,\vvh)
    \leq\ashP(\vvh,\vvh)
    \leq\alpha^*\asP(\vvh,\vvh)\quad\forall\vvh\in\Vhk.
  \end{align}
  The constants $\alpha_*$ and $\alpha^*$ may depend on the Lam\'e
  coefficients $\mu$ and $\lambda$, and the polynomial degree
  $k$~\cite{BeiraodaVeiga-Chernov-Mascotto-Russo:2016}.
\end{description} 

\medskip
\begin{remark}
  We will use the stability of both $\mshP(\cdot,\cdot)$ and
  $\ashP(\cdot,\cdot)$ to prove that the semi-discrete virtual element
  approximation is stable in time, i.e., the approximate solution
  $\uvh(t)$ for all $t\in(0,T]$ depends continuously on the initial
  solutions $\uv_0$ and $\uv_1$ and the source term $\fv$.
\end{remark}

\medskip
\begin{remark}
  The stability of the symmetric bilinear forms $\mshP(\cdot,\cdot)$
  and $\ashP(\cdot,\cdot)$ implies that both are inner products on
  $\Vhk$, and, hence, that they are continuous.
  In fact, the Cauchy-Schwarz inequality and the local stability of
  $\msh$ imply that
  \begin{align}
    \msh(\vvh,\wvh)
    &\leq\big(\msh(\vvh,\vvh)\big)^{\frac{1}{2}}\big(\msh(\wvh,\wvh)\big)^{\frac{1}{2}}
    \leq\mu^*\big(\ms(\vvh,\vvh)\big)^{\frac{1}{2}}\big(\ms(\wvh,\wvh)\big)^{\frac{1}{2}}
    \nonumber\\[0.5em]
    &\leq\mu^*\norm{\rho}{\infty}\norm{\vvh}{0}\norm{\wvh}{0}
    \label{eq:msh:continuity}
  \end{align}
  for all $\vvh\in\Vvhk$.
  Similarly, the Cauchy-Schwarz inequality and the local stability of
  $\ash$ imply that
  \begin{align}
    \ash(\vvh,\wvh)
    \leq\big(\ash(\vvh,\vvh)\big)^{\frac{1}{2}}\big(\ash(\wvh,\wvh)\big)^{\frac{1}{2}}
    \leq\alpha^*\big(\as(\vvh,\vvh)\big)^{\frac{1}{2}}\big(\as(\wvh,\wvh)\big)^{\frac{1}{2}}
    =\alpha^*{ \RED{\snorm{\vvh}{1}\snorm{\wvh}{1}} }
  \end{align}
  for all $\vvh\in\Vvhk$.
\end{remark}

\subsection{Approximation of  the right-hand side}
We approximate the right-hand side~\eqref{eq:VEM:semi-discrete} of the
semi-discrete formulation (and, consequently,
\eqref{eq:VEM:fully-discrete} of the full discrete formulation) as
follows:
\begin{align}
  \Fsh(\vvh) 
  = \int_{\Omega}\fv\cdot\Piz{k-2}(\vvh)\dV 
  + \sum_{\E\in\GamN}\int_{ \RED{\E} }\gvN\cdot{ \RED{\vvh} }\dS
  \quad\forall\vvh\in\Vvhk.
  \label{eq:Fsh:def}
\end{align}
The linear functional $\Fsh(\cdot)$ is clearly computable since
\RED{ $\vvh|_\E$ is a polynomial and } $\Piz{k}(\vvh)$ is
computable from the degrees of freedom of $\vvh$.
Moreover, when $\gvN=0$ using the stability of the projection operator
and the Cauchy-Schwarz inequality, we note that
\begin{align}
  \ABS{\Fsh(\vvh)}
  \leq\left\vert
    \int_{\Omega}\fv(t)\cdot\Piz{k-2}(\vvh)\dV
  \right\vert
  \leq \NORM{\fv(t)}{0}\NORM{\Piz{k-2}(\vvh)}{0}
  \leq \NORM{\fv(t)}{0}\NORM{\vvh}{0}
  \quad\forall t\in[0,T].
  \label{eq:Fsh:stab}
\end{align}
We will use~\eqref{eq:Fsh:stab} in the proof of the stability of the
semi-discrete virtual element approximation.

\subsection{The hitchhiker's guide of the VEM for the elastodynamics equation}
\label{subsec:vem:implementation}
In this subsection, we present the implementation details that are
practically useful and the basic steps to reduce the implementation of
the VEM to the calculation of a few small elemental matrices; \BLUE{cf. also Algorithm~1
where the sketch of the pseudocode is presented}.
In fact, the implementation of the VEM relies on the
$\LTWO$-orthogonal projection matrices for scalar shape functions and
their gradients.\cite{BeiraodaVeiga-Brezzi-Marini-Russo:2014}


We build the mass and stiffness matrices of the VEM by applying
definitions~\eqref{eq:mshP:def} and~\eqref{eq:ashP:def} to the
vector-valued shape functions generating $\Vvhk(\P)$.
Let $\bvarphi_i$ be the $i$-th ``canonical'' vector-valued basis
function of the global virtual element space $\Vvhk$.
We define the \emph{mass matrix} $\Mv=(\matM_{ij})$ and the stiffness
matrix $\Kv=(\matK_{ij})$ by
$\matM_{ij}=\msh(\bvarphi_j,\bvarphi_i)$ and
$\matK_{ij}=\ash(\bvarphi_j,\bvarphi_i)$, respectively.
The stability condition~\eqref{eq:msh:stability} implies that
$\norm{\bvarphi_i}{0}^2\lesssim\msh(\bvarphi_i,\bvarphi_i)\lesssim\norm{\bvarphi_i}{0}^2$
for every $i$.
Therefore, mass matrix $\Mv$ is strictly positive definite (and
symmetric) and, hence, nonsingular.
Similarly, the stability condition~\eqref{eq:ash:stability} implies
that
$\snorm{\bvarphi_i}{1}^2\lesssim\ash(\bvarphi_i,\bvarphi_i)\lesssim\snorm{\bvarphi_i}{1}^2$.
Therefore, the stiffness matrix $\Kv$ is non-negative
definite (and symmetric).

\medskip
As discussed in the previous subsections, the virtual element space of
two-dimensional vector-valued functions $\vvh\in\Vvhk(\P)$ is built by
taking the two components of $\vvh$ in the scalar virtual element
space $\Vhk(\P)$.
Let $\phi_i$ be the shape function of $\Vhk(\P)$ that is associated
with the $i$-th degree of freedom, so that by definition, its $i$-th
degree of freedom is equal to one, while all other degrees of freedom
are equal to zero.
Here, we consider the index $i$ (and $j$ in the next formulas) as
running from $1$ to $N^{\dofs}$, where $N^{\dofs}$ is the dimension of
$\Vhk(\P)$.
Using this convention, the dimension of the vector virtual element
space $\Vvhk(\P)$ is actually $2N^{\dofs}$.
Accordingly, the set of ``canonical'' shape functions that generate
$\Vvhk(\P)$ is given by vector-valued functions of the form
$\bphiUP_i=(\phi_i,0)^T$ and $\bphiDW_i=(0,\phi_i)^T$.

\medskip
For the exposition's sake, we simplify the notation for the orthogonal
projections.
More precisely, we use the ``hat'' symbol over $\phi_i$, e.g.,
$\widehat{\phi_i}$, to denote the $\LTWO$-projection of $\phi_i$ onto
the polynomials of degree $k$.
We also denote the partial derivatives of $\phi_i$ along the $x$ and
$y$ direction by $\px\phi_i$ and $\py\phi_i$, respectively, and, with
a small abuse of notation, their $\LTWO$-orthogonal projections onto
the polynomials of degree $k-1$ by $\pxh{\phi_i}$ and $\pyh{\phi_i}$
As discussed previously, all these projections are computable from the
degrees of freedom of $\phi_i$.\cite{BeiraodaVeiga-Brezzi-Marini-Russo:2014}

\medskip
Let $\Mv=\Mv^c+\Mv^s$ and $\Kv=\Kv^c+\Kv^s$ be the mass and stiffness
matrices, that we write as the sum of the consistence term, i.e.,
matrices $\Mv^c$ and $\Kv^c$, and stability term, i.e., matrices
$\Mv^s$ and $\Kv^s$.
In the rest of this section, we detail the construction of each one of
these four matrix terms.

\medskip
The splitting of the polygonal shape functions in vector-valued
functions like $\bphiUP_i$ and $\bphiDW_i$, where only one components
is actually nonzero, simplifies the expression of the mass matrix
significantly.
Matrix $\Mv^{c}$ is, indeed, block diagonal, each block has size
$N^{\dofs}\times N^{\dofs}$, and its $(ij)$-th entry is given by
\begin{align}
  \Mv^c_{ij} = \int_{\P}\widehat{\phi}_{i}\,\widehat{\phi}_{j}\dV.
\end{align}
Let $\bPi{k}$ be the projection matrix of the set of scalar shape
functions $\{\phi_i\}_{i=1}^{N^{\dofs}}$.
Matrix $\bPi{k}$ has size $N^{\poly}\times N^{\dofs}$, where
$N^{\poly}$ is the dimension of $\PS{k}(\P)$.
The coefficients of the expansion of $\widehat{\phi_i}$ on the monomial basis
are on the $i$-th columns of $\bPi{k}$, so that:
\begin{align}
  \widehat{\phi_i}(x,y) =
  \sum_{\alpha=1}^{N^{k}}\ms_{\alpha}(x,y)\big(\bPi{k}\big)_{\alpha,i}.
\end{align}
Using this polynomial expansion we find that 
\begin{align}
  \Mv^c_{ij} = \sum_{\alpha,\beta=1}^{N^{\poly}} \Qv_{\alpha,\beta}\big(\bPi{k}\big)_{\alpha,i}\big(\bPi{k}\big)_{\beta,j},
\end{align}
where $\Qv$ is the mass matrix of the monomials,
\begin{align}
  \Qv_{\alpha,\beta} = \int_{\P}\ms_{\beta}(x,y)\,\ms_{\alpha}(x,y)\,\dV.
  \label{eq:Qv:def}
\end{align}
The equivalent matrix form is
\begin{align}
  \Mv^{c} = \big(\bPi{k}\big)^T\,\Qv\,\bPi{k}.
  \label{eq:Mvc:def}
\end{align}
The stability matrix used in this work is obtained from the
stabilization bilinear form
\begin{align}
  \SP_{m}(\vvh,\wvh) =
  \overline{\rho}\hP^2
  \sum_{\ell=1}^{2N^{\dofs}}\textrm{dof}_{\ell}(\vvh)\,\textrm{dof}_{\ell}(\wvh),
\end{align}
where $\overline{\rho}$ is the cell-average of $\rho$ over $\P$.
We recall that $\ell$ runs from $1$ to $2N^{\dofs}$ since $N^{\dofs}$
is the number of degrees of freedom of the scalar virtual element
space.
Using $\bphiUP_i=(\phi_i,0)^T$ and $\bphiDW_i=(0,\phi_i)^T$ for the
vector basis functions, the stability part of the mass matrix is
provided by the formula:
\begin{align}
  \Mv^s_{ij} =
  \overline{\rho}\hP^2
  \sum_{\ell=1}^{2N^{\dofs}}\bigg[
  &
  \textrm{dof}_{\ell}\big( (1-\Piz{k})\bphiUP_i \big)\,\textrm{dof}_{\ell}\big( (1-\Piz{k})\bphiUP_j \big) 
  \nonumber\\[0.5em]
  &+\textrm{dof}_{\ell}\big( (1-\Piz{k})\bphiDW_i \big)\,\textrm{dof}_{\ell}\big( (1-\Piz{k})\bphiDW_j \big)
  \bigg].
\end{align}
Using $\bphiUP_i=(\phi_i,0)^T$ and $\bphiDW_i=(0,\phi_i)^T$ for the
vector basis functions, the stability part of the mass matrix is
provided by the formula:
\begin{align}
  \Mv^s =
  \overline{\rho}\hP^2
  \left[
    \begin{array}{cc}
      \big(\Iv-\bPi{k}\big)^T\,\big(\Iv-\bPi{k}\big) & 0 \\[0.5em]
      0 & \big(\Iv-\bPi{k}\big)^T\,\big(\Iv-\bPi{k}\big)
    \end{array}
  \right].
  \label{eq:Mvs:def}
\end{align}
The block-diagonal structure above is induced by our choice of using
the vector basis functions $\bphiUP_i=(\phi_i,0)^T$ and
$\bphiDW_i=(0,\phi_i)^T$.

\medskip
The situation is more complex for the stiffness matrix, where the
splitting ``$\UP-\DW$'' induces the $2\times 2$ splitting:
\begin{align}
  \Kv^c = 
  \left[
    \begin{array}{ll}
      \Kv^{c,\UP,\UP} &\quad \Kv^{c,\UP,\DW} \\[0.5em]
      \Kv^{c,\DW,\UP} &\quad \Kv^{c,\DW,\DW}
    \end{array}
  \right].
  \label{eq:Kvc:def}
\end{align}
To detail each one of these four submatrices, consider first a generic
vector-valued field $\wv=(\wsx,\wsy)^T$.
From the standard definition of the tensor fields $\beps(\wv)$ and
$\bsig(\wv)$, we immediately find that:
\begin{align}
  \beps(\wv) = 
  \left[
    \begin{array}{cc}
      \px\wsx                      & \small{\frac{1}{2}}(\px\wsy+\py\wsx) \\[0.5em]
      \frac{1}{2}(\px\wsy+\py\wsx) & \py\wsy   
    \end{array}
  \right],
\end{align}
and, according to~\eqref{eq:bsig:def},
\begin{align}
  \bsig(\wv) = 
  \left[
    \begin{array}{cc}
      (2\mu+\lambda)\px\wsx+\lambda\py\wsy & \mu(\px\wsy+\py\wsx) \\[0.5em]
      \mu(\px\wsy+\py\wsx)                 & \lambda\px\wsx + (2\mu+\lambda)\py\wsy
    \end{array}
  \right].
\end{align}
Then, we take
$\wv\in\big\{\bphiUP_i,\,\bphiDW_i\big\}_{i=1}^{N^{\dofs}}$, so that
\begin{align}
  \beps(\bphiUP_i) = 
  \left[
    \begin{array}{cc}
      \px\phi_i            & \frac{1}{2}\py\phi_i \\[0.5em]
      \frac{1}{2}\py\phi_i & 0   
    \end{array}
  \right],\qquad
  \beps(\bphiDW_i) = 
  \left[
    \begin{array}{cc}
      0                    & \frac{1}{2}\px\phi_i \\[0.5em]
      \frac{1}{2}\px\phi_i & \py\phi_i   
    \end{array}
  \right],
\end{align}
and
\begin{align}
  \bsig(\bphiUP_i) = 
  \left[
    \begin{array}{cc}
      (2\mu+\lambda)\px\phi_i & \mu\py\phi_i \\[0.5em]
      \mu\py\phi_i            & \lambda\px\phi_i
    \end{array}
  \right],\qquad
  \bsig(\bphiDW_i) = 
  \left[
    \begin{array}{cc}
      \lambda\py\phi_i & \mu\px\phi_i \\[0.5em]
      \mu\px\phi_i     & (2\mu+\lambda)\py\phi_i
    \end{array}
  \right].
\end{align}
Using such definitions, a straightforward calculation immediately
provides us the formulas for the stiffness submatrices:
\begin{align}
  \begin{array}{lll}
    \Kv^{c,\UP,\UP}_{ij}
    &
    = \int_{\P} \widehat{\bsig(\bphiUP_j)}\colon\widehat{\beps(\bphiUP_i)}\,\dV
    &= (2\mu+\lambda)\int_{\P} \pxh{\phi_j}\,\pxh{\phi_i}\,\dV 
    + \mu\int_{\P} \pyh{\phi_j}\,\pyh{\phi_i}\,\dV,\\[1.em]
    \Kv^{c,\UP,\DW}_{ij} 
    &
    = \int_{\P} \widehat{\bsig(\bphiUP_j)}\colon\widehat{\beps(\bphiDW_i)}\,\dV
    &= \mu\int_{\P} \pyh{\phi_j}\,\pxh{\phi_i}\,\dV
    + \lambda\int_{\P} \pxh{\phi_j}\,\pyh{\phi_i}\,\dV,\\[1.em]
    \Kv^{c,\DW,\UP}_{ij} 
    &
    = \int_{\P} \widehat{\bsig(\bphiDW_j)}\colon\widehat{\beps(\bphiUP_i)} \,\dV
    &= \lambda\int_{\P} \pyh{\phi_j}\,\pxh{\phi_i}\,\dV
    + \mu\int_{\P} \pxh{\phi_j}\,\pyh{\phi_i}\,\dV,\\[1.em]
    \Kv^{c,\DW,\DW}_{ij}
    &
    = \int_{\P} \widehat{\bsig(\bphiDW_j)}\colon\widehat{\beps(\bphiDW_i)} \,\dV
    &= \mu\int_{\P} \pxh{\phi_j}\,\pxh{\phi_i}\,\dV
    + (2\mu+\lambda)\int_{\P} \pyh{\phi_j}\,\pyh{\phi_i}\,\dV. 
  \end{array}
    \label{eq:Kvc:def-1}
\end{align}
A thorough inspection of these formulas reveals that we only need the
two projection matrices $\bPix{k-1}$ and $\bPiy{k-1}$ such that
\begin{align}
  \pxh{\phi_i}(x,y) = \sum_{\alpha=1}^{N^{k-1}}\ms_{\alpha}(x,y)\big(\bPix{k-1}\big)_{\alpha,i},
  \qquad
  \pyh{\phi_i}(x,y) = \sum_{\alpha=1}^{N^{k-1}}\ms_{\alpha}(x,y)\big(\bPiy{k-1}\big)_{\alpha,i},
\end{align}
and the four additional matrices involving the derivatives of
monomials up to the degree $k$:
\begin{align}
  \Qv^{xx}_{\alpha,\beta} = \int_{\P}\px\ms_{\beta}\px\ms_{\alpha},\dV,\qquad\quad
  \Qv^{xy}_{\alpha,\beta} = \int_{\P}\px\ms_{\beta}\py\ms_{\alpha},\dV,\label{eq:Qv-xx:def}\\[1em]
  \Qv^{yx}_{\alpha,\beta} = \int_{\P}\py\ms_{\beta}\px\ms_{\alpha},\dV,\qquad\quad
  \Qv^{yy}_{\alpha,\beta} = \int_{\P}\py\ms_{\beta}\px\ms_{\alpha},\dV.\label{eq:Qv-yy:def}
\end{align}
Using such matrices we can reformulate the entries of the four
subblocks of matrix $\Kv^c$ as follows:
\begin{align}
  \begin{array}{lll}
    \Kv^{c,\UP,\UP}_{ij}
    &= (2\mu+\lambda) \sum_{\alpha,\beta} \Qv^{xx}_{\alpha,\beta}\,\big(\bPix{k-1}\big)_{\alpha,i}\,\big(\bPix{k-1}\big)_{\beta,j}
    &+ \mu             \sum_{\alpha,\beta} \Qv^{yx}_{\alpha,\beta}\,\big(\bPiy{k-1}\big)_{\alpha,i}\,\big(\bPiy{k-1}\big)_{\beta,j},\\[1.em]
    \Kv^{c,\UP,\DW}_{ij} 
    &= \mu   \sum_{\alpha,\beta} \Qv^{xy}_{\alpha,\beta}\,\big(\bPix{k-1}\big)_{\alpha,i}\,\big(\bPiy{k-1}\big)_{\beta,j}
    &+ \lambda\sum_{\alpha,\beta} \Qv^{yx}_{\alpha,\beta}\,\big(\bPiy{k-1}\big)_{\alpha,i}\,\big(\bPix{k-1}\big)_{\beta,j},\\[1.em]
    \Kv^{c,\DW,\UP}_{ij} 
    &= \lambda\sum_{\alpha,\beta} \Qv^{xy}_{\alpha,\beta}\,\big(\bPix{k-1}\big)_{\alpha,i}\,\big(\bPiy{k-1}\big)_{\beta,j}
    &+ \mu     \sum_{\alpha,\beta} \Qv^{yx}_{\alpha,\beta}\,\big(\bPiy{k-1}\big)_{\alpha,i}\,\big(\bPix{k-1}\big)_{\beta,j},\\[1.em]
    \Kv^{c,\DW,\DW}_{ij}
    &= \mu          \sum_{\alpha,\beta} \Qv^{xx}_{\alpha,\beta}\,\big(\bPix{k-1}\big)_{\alpha,i}\,\big(\bPix{k-1}\big)_{\beta,j}
    &+ (2\mu+\lambda)\sum_{\alpha,\beta} \Qv^{yy}_{\alpha,\beta}\,\big(\bPiy{k-1}\big)_{\alpha,i}\,\big(\bPiy{k-1}\big)_{\beta,j}.
  \end{array}
\end{align}
The equivalent compact matrix form is:
\begin{align}
  \begin{array}{lll}
    \Kv^{c,\UP,\UP}
    &= (2\mu+\lambda) \big(\bPix{k-1}\big)^T\,\Qv^{xx}\,\bPix{k-1}
    &+ \mu            \big(\bPiy{k-1}\big)^T\,\Qv^{yy}\,\bPiy{k-1},\\[1.em]
    \Kv^{c,\UP,\DW} 
    &= \mu    \big(\bPix{k-1}\big)^T\,\Qv^{xy}\,\bPiy{k-1}
    &+ \lambda\big(\bPiy{k-1}\big)^T\,\Qv^{yx}\,\bPix{k-1},\\[1em]
    \Kv^{c,\DW,\UP}
    &= \lambda\big(\bPix{k-1}\big)^T\,\Qv^{xy}\,\bPiy{k-1}
    &+ \mu    \big(\bPiy{k-1}\big)^T\,\Qv^{yx}\,\bPix{k-1},\\[1em]
    \Kv^{c,\DW,\DW}
    &= \mu           \big(\bPix{k-1}\big)^T\,\Qv^{xx}\,\bPix{k-1}
    &+ (2\mu+\lambda)\big(\bPiy{k-1}\big)^T\,\Qv^{yy}\,\bPiy{k-1}.
  \end{array}
  \label{eq:Kvc:def-2}
\end{align}

\medskip
The stability matrix used in this work is obtained from the
stabilization bilinear form
\begin{align}
  \SP_{a}(\vvh,\wvh) =
  \max( 2\overline{\mu}, \overline{\lambda} )
  \sum_{\ell=1}^{2N^{\dofs}}\textrm{dof}_{\ell}(\vvh)\,\textrm{dof}_{\ell}(\wvh),
\end{align}
where $\overline{\mu}$ and $\overline{\lambda}$ are the cell averages
of $\mu$ and $\lambda$, respectively.
Since we assume that the Lam\'e coefficients are constant,
$\overline{\mu}$ and $\overline{\lambda}$ are respectively equal to
$\mu$ and $\lambda$.
We recall that index $\ell$ runs from $1$ to $2N^{\dofs}$ because
$N^{\dofs}$ is the number of degrees of freedom of the scalar virtual
element space.
Using $\bphiUP_i=(\phi_i,0)^T$ and $\bphiDW_i=(0,\phi_i)^T$ for the
vector basis functions, the stability part of the mass matrix is
provided by the formula:
\begin{align}
  \Kv^s_{ij} =
  \max( 2\overline{\mu}, \overline{\lambda} )
  \sum_{\ell=1}^{N^{\dofs}}\bigg[
  &
  \textrm{dof}_{\ell}\left( \left(1-\Pi^{\nabla,\P}_{k}\right) \bphiUP_i \right)
  \textrm{dof}_{\ell}\left( \left(1-\Pi^{\nabla,\P}_{k}\right) \bphiUP_j \right) \nonumber\\
  &+
  \textrm{dof}_{\ell}\left( \left(1-\Pi^{\nabla,\P}_{k}\right) \bphiDW_i \right)
  \textrm{dof}_{\ell}\left( \left(1-\Pi^{\nabla,\P}_{k}\right) \bphiDW_j \right)
  \bigg].
  \label{eq:stiffness:stab:bilinear:form}
\end{align}
The stability matrix can be written in block-diagonal form, and in
this work we consider
\begin{align}
  \Kv^s =
  \max( 2\overline{\mu}, \overline{\lambda} )
  \left[
    \begin{array}{cc}
      \big(\Iv-\bPin{k}\big)^T\,\big(\Iv-\bPin{k}\big) & 0 \\[0.5em]
      0 & \big(\Iv-\bPin{k}\big)^T\,\big(\Iv-\bPin{k}\big)
    \end{array}
  \right],
  \label{eq:stiffness:stab:matrix}
\end{align}
where $\bPin{k}$ is the elliptic projection matrix for the scalar
case.
An alternative formulation can be obtained by considering the
orthogonal projector $\Pi^{0,\P}_{k}$ instead of the elliptic
projector $\Pi^{\nabla,\P}_{k}$
in~\eqref{eq:stiffness:stab:bilinear:form},\ and, consistently, the
orthogonal projection matrix $\bPi{k}$ instead of $\bPin{k}$
in~\eqref{eq:stiffness:stab:matrix}.
\section{Stability and convergence analysis for the semi-discrete problem}
\label{sec4:convergence}

The main results of this section are stated in
Theorems~\ref{theorem:semi-discrete:stability},
\ref{theorem:semi-discrete:convergence}
and~\ref{theorem:L2:convergence} below, that prove the
stability and convergence in the mesh dependent energy norm that will
be introduced in~\eqref{eq:three-bar-norm} and the convergence in the
$\LTWO(\Omega)$-norm, respectively.
For exposition's sake, we set $\rho=1$ in~\eqref{eq:pblm:strong:A},
\eqref{eq:ms:def} and~\eqref{eq:mshP:def} and $\gvN=0$
in~\eqref{eq:pblm:strong:C}, \eqref{eq:Fv:def} and~\eqref{eq:Fsh:def}.

\subsection{Technicalities and preliminary results}
\label{subsec:technicalities}
To carry out the analysis of this section and derive \emph{a priori}
estimates, we need the error estimates for piecewise polynomial
approximations and interpolation in the virtual element space $\Vvhk$
that are stated in the two following lemmas.

\medskip
\begin{lemma}\label{lemma:hp:convergence:1}
  Let $\PS{k}(\Th)$ be the space of piecewise discontinuous polynomials of
  degree up to $k$ defined on mesh $\Th$.
  Under the mesh regularity assumption \ASSUM{A0}, for all
  $\uv\in\HS{m+1}(\Omega)$, $m\in\INTG$, there exists a vector-valued
  field $\uv_\pi\in\big[\PS{k}(\Th)\big]^2$ such that
  \begin{align}
    \begin{array}{rll}
      \NORM{ \uv-\uv_\pi }{0}    &\lesssim\frac{\hh^{\mu+1}}{k^{m+1}}\NORM{\uv}{m+1} & \quad\mu=\min(k,m),\,m\geq0,\\[1.00em]
      \ABS{ \uv-\uv_\pi }_{1,\hh} &\lesssim\frac{\hh^{\mu}}{k^{m}}\NORM{\uv}{m+1}    & \quad\mu=\min(k,m),\,m\geq1.
      \label{eq:hp:convergence:1}
    \end{array}
  \end{align}  
\end{lemma}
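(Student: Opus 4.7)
The plan is to reduce the global vector-valued estimate to a scalar, element-by-element polynomial approximation result, to which classical $hp$-approximation estimates on star-shaped domains may be applied.

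First, I would define $\uv_\pi$ locally, element by element. Since $[\PS{k}(\Th)]^2$ is a space of piecewise discontinuous vector polynomials without any global conformity requirement, one may pick $\uv_\pi$ independently on each $\P\in\Th$. A convenient choice is to let $\restrict{\uv_\pi}{\P}$ be, componentwise, a best polynomial approximation of $\restrict{\uv}{\P}$ in $\PS{k}(\P)$ of Babu\v{s}ka--Suri type (for instance the averaged Taylor polynomial on a ball of radius $\varrho\hP$ contained in $\P$, or any polynomial realizing the Bramble--Hilbert bound with explicit $k$-dependence).

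The core step is then a local estimate: for each element $\P$ and each scalar component $u$ of $\uv$, there exists $q\in\PS{k}(\P)$ such that
\begin{align*}
  \norm{u-q}{0,\P} &\lesssim \frac{\hP^{\mu+1}}{k^{m+1}}\norm{u}{m+1,\P},
  & \snorm{u-q}{1,\P} &\lesssim \frac{\hP^{\mu}}{k^{m}}\norm{u}{m+1,\P},
\end{align*}
with $\mu=\min(k,m)$. This is the classical $hp$-polynomial approximation estimate on a star-shaped domain with chunkiness constant controlled by $\varrho$, and it is precisely where assumption \ASSUM{A0}(i) is used: star-shapedness with respect to a disk of radius $\ge\varrho\hP$ allows one to scale the reference estimate without introducing $\varrho$-blowing constants. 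I would invoke the result as stated in the VEM $p$/$hp$ literature rather than re-deriving it.

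Next, I would sum the local estimates over all elements. Squaring and summing gives
\begin{align*}
  \NORM{\uv-\uv_\pi}{0}^2 = \sum_{\P\in\Th}\norm{\uv-\uv_\pi}{0,\P}^2
  \lesssim \sum_{\P\in\Th}\frac{\hP^{2(\mu+1)}}{k^{2(m+1)}}\norm{\uv}{m+1,\P}^2
  \leq \frac{\hh^{2(\mu+1)}}{k^{2(m+1)}}\NORM{\uv}{m+1}^2,
\end{align*}
using $\hP\le\hh$, and analogously for the broken $H^1$-seminorm. Taking square roots yields the two bounds in~\eqref{eq:hp:convergence:1}.

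The main obstacle is entirely concentrated in the local $hp$-estimate: obtaining the explicit $k^{-(m+1)}$ and $k^{-m}$ factors requires the sharp Babu\v{s}ka--Suri / Chernov-type approximation theory on general star-shaped polygonal domains, which is non-trivial but by now standard. Once that result is quoted, the passage to the global piecewise estimate, and to the vector-valued setting by handling each Cartesian component separately, is entirely mechanical.
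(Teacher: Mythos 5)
Your proposal is correct and matches the paper's treatment: the paper proves this lemma simply by citing the $hp$ virtual element approximation results of Beir\~ao da Veiga, Chernov, Mascotto and Russo, which is exactly the local Babu\v{s}ka--Suri-type estimate you invoke, and your componentwise, element-by-element reduction followed by summing over $\P\in\Th$ is the standard (implicit) way that citation is applied here.
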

\begin{proof}
  The assertion of the lemma is proved
  in Reference~\cite{BeiraodaVeiga-Chernov-Mascotto-Russo:2016}.
\end{proof}

\medskip
\begin{lemma}\label{lemma:hp:convergence:2}
  Under the mesh regularity assumption \ASSUM{A0}, for all
  $\uv\in\HS{m+1}(\Omega)$, $m\in\INTG$, there exists a virtual
  element interpolant $\uvI\in\Vvhk$ such that
  \begin{align}
    \begin{array}{rll}
      \NORM{\uv-\uvI}{0} &\lesssim\frac{\hh^{\mu+1}}{k^{m}}\,\NORM{\uv}{m+1}  & \quad\mu=\min(k,m),\,m\geq0,\\[0.75em]
      \ABS{\uv-\uvI}_{1}  &\lesssim\frac{\hh^{\mu}}{k^{m}}\,\NORM{\uv}{m+1}   & \quad\mu=\min(k,m),\,m\geq1.
    \end{array}
    \label{eq:hp:convergence:2}
  \end{align}
\end{lemma}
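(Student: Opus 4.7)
The plan is to construct the interpolant componentwise and reduce the estimate to piecewise polynomial approximation by exploiting the fact that $\bigl[\PS{k}(\P)\bigr]^2 \subset \Vvhk(\P)$. Since $\Vvhk(\P) = [\Vhk(\P)]^2$, it suffices to treat each scalar component. On every element $\P \in \Th$, define the scalar interpolant $u_I$ locally by prescribing its degrees of freedom \textbf{(C1)}--\textbf{(C3)} to coincide with those of the corresponding component of $\uv$: the vertex values, the edge moments against $\calM_{k-1}(\E)$ on every $\E\in\partial\P$, and the internal moments against $\calM_{k-2}(\P)$. Unisolvence of the DOFs on $\Vhk(\P)$ makes this well-defined, and matching vertex values together with edge moments guarantees interelement continuity, so the assembled field $\uvI$ lies in $\Vvhk$.

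Next, I would establish that the local interpolation operator $\uv \mapsto \uvI|_\P$ is a projection onto $\bigl[\PS{k}(\P)\bigr]^2$: since $\PS{k}(\P)\subset\Vhk(\P)$ and the DOFs are unisolvent, any polynomial of degree $\leq k$ coincides with its own VEM interpolant. The remaining ingredient is a local stability bound of the form
\begin{align*}
\NORM{\uvI}{0,\P} \lesssim \NORM{\uv}{0,\P} + \hP\snorm{\uv}{1,\P},
\qquad
\snorm{\uvI}{1,\P} \lesssim \snorm{\uv}{1,\P} + \hP^{-1}\NORM{\uv}{0,\P},
\end{align*}
with constants depending on $k$ in the appropriate way to match the stated $hp$-rates. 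These follow from the standard VEM recipe: bound each DOF functional against suitably scaled trace and Poincar\'e inequalities on the star-shaped element $\P$ (using \ASSUM{A0}), then pass to norms on $\uvI$ via an $hp$-scaled inverse inequality on the polynomial image of the DOFs and the equivalence between the discrete norm induced by the DOFs and the ambient Sobolev norms on $\Vhk(\P)$.

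Once this local stability is in hand, the estimates follow from a standard Bramble--Hilbert/Deny--Lions argument. Letting $\uv_\pi$ denote the piecewise polynomial approximant provided by Lemma~\ref{lemma:hp:convergence:1} and using polynomial invariance $(\uv_\pi)_I = \uv_\pi$ elementwise, split
\begin{align*}
\uv - \uvI = (\uv - \uv_\pi) - (\uv - \uv_\pi)_I \quad\text{on each } \P,
\end{align*}
apply the local stability bound to $(\uv - \uv_\pi)_I$, sum over $\P\in\Th$, and invoke the rates from Lemma~\ref{lemma:hp:convergence:1}. This yields~\eqref{eq:hp:convergence:2}, provided the constants in the local stability estimate carry the correct powers of $k$.

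The main obstacle I expect is precisely tracking the $k$-dependence in the local stability bound. Sharp $hp$-trace and inverse inequalities on star-shaped polygonal elements are more delicate than in the shape-regular simplicial case, because a single reference element is not available; one must instead use the scaling of norms under the star-shapedness parameter $\varrho$ in \ASSUM{A0} and the $hp$-Clement/Scott--Zhang-type constructions adapted to polygons. For this part I would either invoke the interpolation theory developed in Reference~\cite{BeiraodaVeiga-Chernov-Mascotto-Russo:2016}, where such $hp$-robust estimates on virtual element spaces are established, or adapt the argument therein to the vector-valued setting used here, noting that the vector-valued interpolant inherits the rates from its scalar components.
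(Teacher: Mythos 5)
Your plan is essentially sound, but it is more than the paper does: the paper's entire proof of this lemma is a citation to Reference~\cite{BeiraodaVeiga-Chernov-Mascotto-Russo:2016} (see also \cite{Certik-Gardini-Manzini-Mascotto-Vacca:2019}), whereas you reconstruct the standard argument -- componentwise DOF interpolation, polynomial invariance of the interpolation operator on $\big[\PS{k}(\P)\big]^2\subset\Vvhk(\P)$, local $hp$-stability, and the splitting $\uv-\uvI=(\uv-\uv_\pi)-(\uv-\uv_\pi)_{\INTP}$ combined with Lemma~\ref{lemma:hp:convergence:1} -- and defer only the hard technical core (the $k$-explicit local stability of the interpolation operator on star-shaped polygons) to the same reference. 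That is exactly how the result is proved there, so the two routes coincide at the level of substance; yours simply makes the mechanism visible. What your sketch buys is transparency about where the constants come from; what the citation buys is not having to redo the delicate $hp$-trace, inverse, and Poincar\'e estimates under \ASSUM{A0}, which you correctly identify as the main obstacle.

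Two points deserve care if you were to write this out. First, the vertex and edge DOFs \textbf{(C1)}--\textbf{(C2)} require point values and edge traces of $\uv$, so the nodal interpolant is only defined for $\uv$ continuous; this is automatic for $m\geq1$ in two dimensions, but the $\LTWO$ estimate is claimed for $m\geq0$, where a Cl\'ement/Scott--Zhang-type quasi-interpolant (which you mention in passing) is genuinely needed rather than optional. Second, your remark that the argument works ``provided the constants in the local stability estimate carry the correct powers of $k$'' is precisely where the statement's $\LTWO$ bound loses a power: the stability constant of the VEM interpolation operator is not $k$-uniform, which is why the lemma has $\hh^{\mu+1}\slash k^{m}$ rather than $\hh^{\mu+1}\slash k^{m+1}$ (cf.\ Remark~\ref{remark5}). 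A naive reading of your Bramble--Hilbert step would suggest the optimal rate; the suboptimality is not an artifact of the splitting but of the operator itself, so you should not expect to recover $k^{m+1}$ by this route without the additional duality argument sketched in Remark~\ref{remark5}.
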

\begin{proof}
  The assertion of the lemma is proved
  in Reference~\cite{BeiraodaVeiga-Chernov-Mascotto-Russo:2016}. See also ~\cite{Certik-Gardini-Manzini-Mascotto-Vacca:2019}.
\end{proof}

\begin{remark}\label{remark5}
  The $L^2$-estimate provided by Lemma~\ref{lemma:hp:convergence:2} is
  suboptimal in $k$ as we have a dependence like $k^{m}$ instead of
  $k^{m+1}$ in the fraction denominators.
   \RED{ We point out that, whenever $\Omega$ is convex and the partition
    $\Th$ is made of convex elements, following the proof of
    Proposition 1 in \cite{BeiraodaVeiga-Manzini-Mascotto:2019} and
    combining the energy estimate in \eqref{eq:hp:convergence:2}
    (where $\uvI$ has to be defined as in eq. (20) of
    \cite{BeiraodaVeiga-Manzini-Mascotto:2019}) with the standard
    Aubin-Nitsche trick it is possible to prove that the following
    holds
    \begin{align}
      \begin{array}{rll} \NORM{\uv-\uvI}{0}
        \lesssim\frac{\hh^{\mu+1}}{k^{m+1}}\,\NORM{\uv}{m+1} &
        \quad\mu=\min(k,m),\,m\geq0.\\[0.75em]
      \end{array}
    \end{align}
    provided that $hp$ optimal $L^2$-error estimates hold for the
    continuous piecewise polynomial approximation $\varphi_k^h$
    introduced in eq. (4.10) of
    \cite{Beirao-Chernov-Mascotto-Russo:2018}.  }
\end{remark}

\subsection{Stability}
\label{subsec:stability}
The semi-discrete virtual element approximation of the time-dependent
linear elastodynamics problem in variational form is stable and
convergent, cf. Theorems~\ref{theorem:semi-discrete:stability} and
\ref{theorem:semi-discrete:convergence} below, which are the main
results of this section.
Moreover, we state Theorems~\ref{theorem:semi-discrete:stability} and
\ref{theorem:semi-discrete:convergence} below by using the
\emph{energy} norm
\begin{align}
  \TNORM{\vvh(t)}{}^2 
  = \NORM{ \rho^{\frac{1}{2}}\dot{\vv}_{\hh}(t) }{0}^2 
  + \abs{ \vvh(t) }_{1}^2,
  \qquad t\in[0,T],
  \label{eq:three-bar-norm}
\end{align}
which is defined for all $\vs\in\Vvhk$.
The local stability property of the bilinear forms $\msh(\cdot,\cdot)$
and $\ash(\cdot,\cdot)$ readily imply the equivalence relation
\begin{align}
  \msh(\dot{\vv}_{\hh},\dot{\vv}_{\hh}) + \ash(\vvh,\vvh)
  \lesssim\TNORM{\vvh(t)}{}^2\lesssim \msh(\dot{\vv}_{\hh},\dot{\vv}_{\hh}) +
  \ash(\vvh,\vvh)
  \label{eq:energy:norm:equivalence}
\end{align}
for all time-dependent virtual element functions $\vvh(t)$ with square
integrable derivative $\dot{\vv}_{\hh}(t)$.

\medskip
\begin{remark}
  \RED{ The hidden constants in~\eqref{eq:energy:norm:equivalence}
    are independent of the mesh size parameter $\hh$.
    However, they may depend on the stability parameters $\mu_*$,
    $\mu^*$, $\alpha_*$, $\alpha^*$, the regularity constant $\varrho$
    of the mesh, and the polynomial degree
    $k$.\cite{Beirao-Chernov-Mascotto-Russo:2018}
    The dependence on $k$ does not seem to have a relevant impact on
    the optimality of the convergence rates in our numerical
    experiments in Section~\ref{sec5:numerical}.  }
\end{remark}

\medskip
\begin{theorem}\label{theorem:semi-discrete:stability}
  Let $\fv\in\LTWO( (0,T]; [\LTWO(\Omega)]^2 )$ and let
  $\uvh\in\CS{2}((0,T]; \Vvhk)$ be the solution of
  \eqref{eq:VEM:semi-discrete}.
  Then, it holds
  \begin{equation}
    \TNORM{\uvh(t)}{} \lesssim 
    \TNORM{ (\uv_{0})_I }{} + \int_0^t \NORM{ \fv(\tau) }{0,\Omega} d\tau. 
  \end{equation}
  The hidden constant in $\lesssim$ is independent of $\hh$, but may
  depend on the model parameters and approximation constants and the
  polynomial degree $k$.
\end{theorem}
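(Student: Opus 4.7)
The plan is to run a classical energy argument for the symmetric hyperbolic problem at the discrete level, exploiting that both $\msh(\cdot,\cdot)$ and $\ash(\cdot,\cdot)$ are symmetric inner products on $\Vvhk$. I will test the semi-discrete equation~\eqref{eq:VEM:semi-discrete} with $\vvh = \dot{\uv}_h(t) \in \Vvhk$, which is admissible because $\uvh \in \CS{2}((0,T];\Vvhk)$. Since $\msh$ is a symmetric bilinear form, bilinearity gives
\[
\msh(\ddot{\uv}_h,\dot{\uv}_h) \;=\; \tfrac{1}{2}\tfrac{d}{dt}\msh(\dot{\uv}_h,\dot{\uv}_h),
\]
and similarly the symmetry of $\ash$ gives $\ash(\uvh,\dot{\uv}_h) = \tfrac{1}{2}\tfrac{d}{dt}\ash(\uvh,\uvh)$. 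Introducing the discrete energy $E(t) := \msh(\dot{\uv}_h(t),\dot{\uv}_h(t)) + \ash(\uvh(t),\uvh(t))$, the tested equation becomes $\tfrac{1}{2}\dot{E}(t) = \Fsh(\dot{\uv}_h(t))$.

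Next I would bound the right-hand side using the continuity estimate~\eqref{eq:Fsh:stab} (recall $\gvN=0$ by assumption here), which gives $|\Fsh(\dot{\uv}_h)| \leq \NORM{\fv(t)}{0}\,\NORM{\dot{\uv}_h(t)}{0}$. With $\rho=1$, the norm equivalence~\eqref{eq:energy:norm:equivalence} and the stability~\eqref{eq:msh:stability} of $\msh$ yield $\NORM{\dot{\uv}_h(t)}{0} \lesssim \sqrt{E(t)}$. Integrating the identity from $0$ to $t$ then produces the nonlinear integral inequality
\[
E(t) \;\leq\; E(0) + C\int_0^t \NORM{\fv(\tau)}{0}\,\sqrt{E(\tau)}\,d\tau.
\]

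At this point I would invoke the standard nonlinear Gronwall lemma of Willett--Wong type, which states that $y^2(t) \le c^2 + 2\int_0^t g(\tau)\,y(\tau)\,d\tau$ with $g\ge 0$ implies $y(t) \le c + \int_0^t g(\tau)\,d\tau$. Applying it to $y(t) = \sqrt{E(t)}$ gives $\sqrt{E(t)} \lesssim \sqrt{E(0)} + \int_0^t \NORM{\fv(\tau)}{0}\,d\tau$. Finally, using~\eqref{eq:energy:norm:equivalence} on both sides and the initial conditions $\uvh(0) = (\uv_0)_I$, $\dot{\uv}_h(0) = (\uv_1)_I$, one identifies $\sqrt{E(0)}$ with $\TNORM{\uvh(0)}{}$, up to constants coming from $\mu_*,\mu^*,\alpha_*,\alpha^*$, yielding the claimed bound.

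The argument is essentially routine once the symmetric discrete calculus is set up; the only step requiring a bit of care is the transition from the energy identity to a bound in the energy norm, since the right-hand side contains $\sqrt{E(\tau)}$ rather than $E(\tau)$, so a \emph{nonlinear} (not just linear) Gronwall inequality is needed. All hidden constants are independent of $\hh$ but depend on $\rho$, the stability constants of $\msh$ and $\ash$, and the polynomial degree $k$ through the equivalence~\eqref{eq:energy:norm:equivalence}, which is consistent with the statement of the theorem.
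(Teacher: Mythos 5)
Your proposal is correct and follows essentially the same route as the paper: test with $\dot{\uv}_{\hh}$, use the symmetry of $\msh$ and $\ash$ to obtain the energy identity, integrate in time, bound $\Fsh(\dot{\uv}_{\hh})$ via~\eqref{eq:Fsh:stab}, and close with a nonlinear Gronwall inequality (the paper cites Lemma~A5, p.~157 of Brezis, which is precisely the Willett--Wong-type lemma you invoke). Your explicit remark that a \emph{nonlinear} Gronwall step is needed because the integrand contains $\sqrt{E(\tau)}$ rather than $E(\tau)$ correctly identifies the one non-routine point of the argument.
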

\begin{proof}
  We substitute $\vvh=\dot{\uv}_{\hh}(t)$
  in~\eqref{eq:VEM:semi-discrete} and, for all $t\in(0,T]$, we obtain
  \begin{align}
    \msh(\ddot{\uv}_{\hh},\dot{\uv}_{\hh})+\ash(\uvh,\dot{\uv}_{\hh})=\Fsh(\dot{\uv}_{\hh}).
    \label{eq:LHS}
  \end{align}
  Since both $\msh(\cdot,\cdot)$ and $\ash(\cdot,\cdot)$ are symmetric
  bilinear forms, a straightforward calculation yields
  \begin{align*}
    \frac{1}{2}\frac{d}{\dt}\big( \msh(\dot{\uv}_{\hh},\dot{\uv}_{\hh}) + \ash(\uvh,\uvh) \big) =
    \msh(\ddot{\uv}_{\hh},\dot{\uv}_{\hh})+\ash(\uvh,\dot{\uv}_{\hh}).
  \end{align*}
  We substitute this expression in the left-hand side
  of~\eqref{eq:LHS}, we integrate in time the resulting equation from
  $0$ to the intermediate time $t$, and using the definition of norm
  $\TNORM{\,\cdot\,}{}$ in~\eqref{eq:three-bar-norm} and the
  equivalence relation~\eqref{eq:energy:norm:equivalence}, we find
  that
  \begin{align*}
    \TNORM{\uvh(t)}{}^2
    &\lesssim
    \msh(\dot{\uv}_{\hh}(t),\dot{\uv}_{\hh}(t)) + \ash(\uvh(t),\uvh(t))
    \\[0.5em]
    &= \msh(\dot{\uv}_{\hh}(0),\dot{\uv}_{\hh}(0)) + \ash(\uvh(0),\uvh(0))
    + 2\int_{0}^{t}\Fsh(\dot{\uv}_{\hh}(\tau))d\tau
    \\[0.5em]
    &\lesssim
    \TNORM{\uvh(0)}{}^2
    +\int_{0}^{t}\Fsh(\dot{\uv}_{\hh}(\tau))d\tau.
  \end{align*}
  Since $\uvh(0)=(\uv_{0})_{\INTP}$, and using~\eqref{eq:Fsh:stab}
  (with $\vvh=\dot{\uv}_{\hh}$), we find that
  \begin{align*}
    \TNORM{\uvh(t)}{}^2
    \lesssim\TNORM{ (\uv_{0})_{\INTP}}{}^2  + \int_{0}^{t} \Fsh\big(\dot{\uv}_{\hh}(\tau)\big) d\tau
    \lesssim\TNORM{ (\uv_{0})_{\INTP} }{}^2 + \int_0^t \NORM{\fv(\tau)}{0}\,\NORM{\dot{\uv}_{\hh}(\tau)}{0}\,d\tau.
  \end{align*}
  The thesis follows on applying Lemma~A5, p.~157 of Reference\cite{Brezis:1973}.
\end{proof}

\subsection{Convergence analysis in the energy norm}
\label{subsec:convergence:energy:norm}
In this section, we prove the convergence of the semi-discrete virtual
element approximation in the energy norm \eqref{eq:three-bar-norm}.
\emph{A priori} error estimates of the approximation error are derived
from Theorem~\ref{theorem:semi-discrete:convergence} as a corollary,
which is reported at the end of the section, by using approximation
results for discontinuous polynomial and virtual element spaces.

\medskip
\begin{theorem}\label{theorem:semi-discrete:convergence}
  Let $\uv\in\CS{2}\big( (0,T]; [H^{m+1}(\Omega)]^2 \big)$,
  $m\in\INTG$, be the exact solution of problem~\eqref{eq:weak:A}.
  Let $\uvh\in\Vvhk$ be the solution of the semi-discrete
  problem~\eqref{eq:VEM:semi-discrete} under the mesh regularity
  assumption \ASSUM{A0}.
  Then, for all $t\in[0,T]$ and all discontinuous polynomial
  approximations $\uv_{\pi}(t)$ of $\uv(t)$, it holds that
  \begin{align}
    \TNORM{\uv(t)-\uvh(t)}{}\lesssim
    \sup_{\tau\in[0,T]}\Gs_0(\tau)
    +\int_{0}^{t}\Gs_1(\tau)d\tau,
    \label{eq:semi-discrete:convergence}
  \end{align}
  where 
  \begin{align}
    \Gs_0(\tau) &= \NORM{\dot{\uv}(\tau)-\dot{\uv}_{\INTP}(\tau)}{0} + \snorm{\uv(\tau)-\uvI(\tau)}{1} + \RED{ \snorm{\uv(\tau)-\uv_{\pi}(\tau)}{1,h} },
    \label{eq:semi-discrete:convergence:G0}
    \\[0.5em]
    \Gs_1(\tau) &= 
    \NORM{\ddot{\uv}(\tau)-\ddot{\uv}_{\INTP}(\tau)}{0} + \NORM{\ddot{\uv}(\tau)-\ddot{\uv}_{\pi}(\tau)}{0} +
    \snorm{\dot {\uv}(\tau)-\dot {\uv}_{\INTP}(\tau)}{1} + \RED{ \snorm{\dot {\uv}(\tau)-\dot {\uv}_{\pi}(\tau)}{1,h} } \nonumber\\[0.5em]
    &+  
    \sup_{\vvh\in\Vvhk\backslash{\REAL^{\dims}}}\frac{\ABS{\Fs(\vvh)-\Fsh(\vvh)}}{\snorm{\vvh}{1}}.
    \label{eq:semi-discrete:convergence:G1}
  \end{align}
  The hidden constant in $\lesssim$ is independent of
  $\hh$, but may depend on the model parameters, the approximation
  constants, and the polynomial degree $k$, and the final observation time $T$.
\end{theorem}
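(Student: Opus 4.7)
The plan is to decompose the total error via the triangle inequality as $\TNORM{\uv(t)-\uvh(t)}{}\le\TNORM{\uv(t)-\uvI(t)}{}+\TNORM{\evh(t)}{}$, where $\uvI(t)\in\Vvhk$ is the time-dependent VEM interpolant of the exact solution and $\evh(t):=\uvI(t)-\uvh(t)\in\Vvhk$. The interpolation piece $\TNORM{\uv-\uvI}{}$ is immediately controlled by $\NORM{\dot{\uv}-\dot{\uv}_{\INTP}}{0}+\snorm{\uv-\uvI}{1}$ by the very definition~\eqref{eq:three-bar-norm} of the energy norm, accounting for two of the terms in $\Gs_0$. The whole content of the proof is therefore to bound $\TNORM{\evh(t)}{}$ by an energy argument that mirrors the stability proof of Theorem~\ref{theorem:semi-discrete:stability}.

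To derive an error equation for $\evh$, I would subtract~\eqref{eq:VEM:semi-discrete} from~\eqref{eq:weak:A} (tested on the admissible $\vvh\in\Vvhk\subset\Vv$), add and subtract $\msh(\ddot{\uv}_{\INTP},\vvh)$ and $\ash(\uvI,\vvh)$, and insert an arbitrary piecewise polynomial $\uv_{\pi}$ of degree $k$. Invoking the $k$-consistency properties~\eqref{eq:msh:k-consistency} and~\eqref{eq:k-consistency} element by element on $\uv_{\pi}|_{\P}\in[\PS{k}(\P)]^2$ yields
\begin{align*}
\msh(\ddot{\ev}_{\hh},\vvh)+\ash(\evh,\vvh)
&=\msh(\ddot{\uv}_{\INTP}-\ddot{\uv}_{\pi},\vvh)
+\ms(\ddot{\uv}_{\pi}-\ddot{\uv},\vvh)\\
&\quad+\ash(\uvI-\uv_{\pi},\vvh)+\as(\uv_{\pi}-\uv,\vvh)
+\bigl(\Fs(\vvh)-\Fsh(\vvh)\bigr).
\end{align*}
Choosing $\vvh=\dot{\ev}_{\hh}$ and exploiting the symmetry of $\msh$ and $\ash$ turns the left-hand side into $\tfrac12\tfrac{d}{dt}\bigl[\msh(\dot{\ev}_{\hh},\dot{\ev}_{\hh})+\ash(\evh,\evh)\bigr]$. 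Because VEM interpolation and $L^2$-polynomial projection are defined through linear functionals, they commute with $\partial_t$, and because the initial data of~\eqref{eq:VEM:semi-discrete} match the VEM interpolants of $\uv_0,\uv_1$, one obtains $\evh(0)=\dot{\ev}_{\hh}(0)=\zero$. Integrating from $0$ to $t$ and using the equivalence~\eqref{eq:energy:norm:equivalence} gives $\TNORM{\evh(t)}{}^2\lesssim\int_0^t\mathrm{RHS}\bigl(\dot{\ev}_{\hh}(\tau)\bigr)\,d\tau$.

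The main technical obstacle is the pair of stiffness-type contributions $\ash(\uvI-\uv_{\pi},\dot{\ev}_{\hh})$ and $\as(\uv_{\pi}-\uv,\dot{\ev}_{\hh})$: the test $\dot{\ev}_{\hh}$ sits in the second argument of an $H^1$-type bilinear form, so a direct continuity bound would require control of $\snorm{\dot{\ev}_{\hh}}{1}$, which is not dominated by $\TNORM{\evh}{}$. The standard remedy is integration by parts in time,
\begin{align*}
\int_0^t\ash(\uvI-\uv_{\pi},\dot{\ev}_{\hh})\,d\tau
=\ash\bigl(\uvI(t)-\uv_{\pi}(t),\evh(t)\bigr)
-\int_0^t\ash(\dot{\uv}_{\INTP}-\dot{\uv}_{\pi},\evh)\,d\tau,
\end{align*}
with no boundary contribution at $\tau=0$ since $\evh(0)=\zero$; the same manipulation handles $\as(\uv_{\pi}-\uv,\dot{\ev}_{\hh})$. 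The terminal boundary term is estimated by the continuity of $\ash$ and $\as$ together with the triangle inequality $\snorm{\uvI-\uv_{\pi}}{1,h}\le\snorm{\uv-\uvI}{1}+\snorm{\uv-\uv_{\pi}}{1,h}$, producing exactly $\sup_{\tau}\Gs_0(\tau)\cdot\TNORM{\evh(t)}{}$ after taking the supremum in time, while the new interior integral supplies the $\dot{\uv}$-interpolation and $\dot{\uv}$-projection terms of $\Gs_1$. The mass-type contributions are handled directly by Cauchy--Schwarz in $L^2$, and the load-consistency term is bounded by dividing by $\snorm{\dot{\ev}_{\hh}}{1}$ to recover the dual-type quantity in $\Gs_1$.

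Collecting everything gives an inequality of the form $\TNORM{\evh(t)}{}^2\lesssim\sup_{\tau}\Gs_0(\tau)\cdot\TNORM{\evh(t)}{}+\int_0^t\Gs_1(\tau)\,\TNORM{\evh(\tau)}{}\,d\tau$. A Young absorption of the first summand into the left-hand side, followed by the Brezis Lemma~A.5 already used in Theorem~\ref{theorem:semi-discrete:stability}, converts this into $\TNORM{\evh(t)}{}\lesssim\sup_{\tau}\Gs_0(\tau)+\int_0^t\Gs_1(\tau)\,d\tau$. Combining with the triangle inequality for $\uv-\uvI$ delivers~\eqref{eq:semi-discrete:convergence}; the hidden constants inherit dependence on the stability and continuity constants of $\msh,\ash,\ms,\as$, on the mesh regularity constant, on the polynomial degree $k$, and, through the Gronwall-type step, on the final time $T$.
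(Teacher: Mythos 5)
Your proposal follows essentially the same route as the paper's own proof: the same splitting $\evh=\uvI-\uvh$ with vanishing initial data, the same error equation obtained by inserting $\uvI$ and $\uv_{\pi}$ and invoking $k$-consistency, the choice $\vvh=\dot{\ev}_{\hh}$, integration by parts in time on the stiffness contributions, a Young absorption of the terminal boundary term, and the Brezis Lemma~A.5 to close the Gronwall-type inequality. The argument is correct and matches the paper's proof in all essential steps.
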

\begin{proof}
  Since $\Vvhk$ is a subspace of $\Vv$, we can take $\vvh\in\Vvhk$ as
  test function in~\eqref{eq:weak:A} and substract from
  \eqref{eq:VEM:semi-discrete} to find the error equation:
  \begin{align}
    \ms(\ddot{\uv}(t),\vvh) - \msh(\ddot{\uv}_{\hh}(t),\vvh) 
    + \as(\uv(t),\vvh) - \ash(\uv_{\hh}(t),\vvh) 
    = \Fs(\vvh) - \Fsh(\vvh),
    \label{eq:proof:energy:05}
  \end{align}
  which holds for all $\vvh\in\Vvhk$.
  Next, we rewrite this equation as
  $\TERM{T}{1}+\TERM{T}{2}=\TERM{T}{3}$, with the definitions:
  \begin{align*}
    \TERM{T}{1} &:= \ms(\ddot{\uv},\vvh) - \msh(\ddot{\uv}_{\hh},\vvh),\nonumber\\[0.5em]
    \TERM{T}{2} &:= \as(\uv,\vvh) - \ash(\uvh,\vvh),\nonumber\\[0.5em]
    \TERM{T}{3} &:= \Fs(\vvh) - \Fsh(\vvh),
  \end{align*}
  where we dropped out the explicit dependence on $t$ to simplify the
  notation.
  We analyze each term separately.
  First, we rewrite $\TERM{T}{1}$ as
  \begin{align*}
    \TERM{T}{1} 
    = \msh(\ddot{\uv}_{\INTP} -\ddot{\uv}_{\hh},\vvh) 
    + \ms (\ddot{\uv}        -\ddot{\uv}_{\pi},\vvh)
    - \msh(\ddot{\uv}_{\INTP} -\ddot{\uv}_{\pi},\vvh) 
  \end{align*}
  by adding and subtracting $\ddot{\uv}_{\INTP}$ and
  $\ddot{\uv}_{\pi}$ to the arguments of $\ms(\cdot,\cdot)$ and
  $\msh(\cdot,\cdot)$ and noting that
  $\ms(\ddot{\uv}_{\pi},\vvh)=\msh(\ddot{\uv}_{\pi},\vvh)$ for all
  $\vvh\in\Vvhk$.
  We also rewrite $\TERM{T}{2}$ as
  \begin{align*}
    \TERM{T}{2} 
    = \ash(\uvI-\uvh,\vvh) 
    + \as (\uv -\uv_{\pi},\vvh)
    - \ash(\uvI-\uv_{\pi},\vvh) 
  \end{align*}
  by adding and subtracting $\uvI$ and $\uv_{\pi}$ to the arguments of
  $\as(\cdot,\cdot)$ and $\ash(\cdot,\cdot)$ and noting that
  $\as(\uv_{\pi},\vvh)=\ash(\uv_{\pi},\vvh)$ for all $\vvh\in\Vvhk$.
  Let $\evh=\uvI-\uvh$.
  It holds that $\evh(0)=\dot{\ev}_{\hh}(0)=0$ since
  $\uvh(0)=\big(\uv_0(0)\big)_{\INTP}=\uvI(0)$ and
  $\dot{\uv}_{\hh}(0)=\big(\uv_1(0)\big)_{\INTP}=\dot{\uv}_{\INTP}(0)$.
  Then, using the definition of $\evh$, we reconsider the error
  equation
  \begin{align}
    \TERM{T}{1} + \TERM{T}{2} 
    &= \msh(\ddot{\ev}_{\hh},\vvh) + \ash(\evh,\vvh) 
    +  \ms (\ddot{\uv}        -\ddot{\uv}_{\pi},\vvh)
    -  \msh(\ddot{\uv}_{\INTP} -\ddot{\uv}_{\pi},\vvh)\nonumber\\[0.5em] 
    &+ \as (\uv -\uv_{\pi},\vvh)
    -  \ash(\uvI-\uv_{\pi},\vvh)
    = \Fs(\vvh) - \Fsh(\vvh).
    \label{eq:proof:energy:10}
  \end{align}
  Assume that $\vvh\neq0$ and consider the inequalities:
  \begin{align}
    \ABS{ \Fs(\vvh) - \Fsh(\vvh) }
    &= \frac{ \ABS{\Fs(\vvh) - \Fsh(\vvh)} }{ \snorm{\vvh}{1} }\,\snorm{\vvh}{1}
    \leq \left(\sup_{\vvh\in\Vvhk\backslash{\{0\}}} \frac{ \ABS{\Fs(\vvh) - \Fsh(\vvh)} }{ \snorm{\vvh}{1} }\right)\,\snorm{\vvh}{1}. 
    \label{eq:proof:energy:10:a}
  \end{align}
  Note that it holds:
  \begin{align}
    \msh(\ddot{\ev}_{\hh},\dot{\ev}_{\hh}) + \ash(\evh,\dot{\ev}_{\hh})=\frac{1}{2}\frac{d}{\dt}\big( \msh(\dot{\ev}_{\hh},\dot{\ev}_{\hh}) + \ash(\evh,\evh) \big),
    \label{eq:proof:energy:15:a}\\[0.5em]
    \ABS{ \Fs(\dot{\ev}_{\hh}) - \Fsh(\dot{\ev}_{\hh}) } \leq \left(\sup_{\vvh\in\Vvhk\backslash{\{0\}}}\frac{ \ABS{\Fs(\vvh) - \Fsh(\vvh)} }{ \snorm{\vvh}{1} }\right)\,\TNORM{\evh}{}. 
    \label{eq:proof:energy:15:b}
  \end{align}
  Setting $\vvh=\dot{\ev}_{\hh}(t)$ on the left-hand side
  of~\eqref{eq:proof:energy:10} and
  employing~\eqref{eq:proof:energy:15:a}-\eqref{eq:proof:energy:15:b}
  together with~\eqref{eq:proof:energy:05}, we obtain, after
  rearranging the terms, that:
  \begin{align}
    &\frac{1}{2}\frac{d}{\dt}\big( \msh(\dot{\ev}_{\hh},\dot{\ev}_{\hh}) + \ash(\evh,\evh) \big)
    \leq 
    {}-\ms (\ddot{\uv}        -\ddot{\uv}_{\pi},\dot{\ev}_{\hh})
    + \msh(\ddot{\uv}_{\INTP} -\ddot{\uv}_{\pi},\dot{\ev}_{\hh}) 
    \nonumber\\[0.5em]
    &\qquad
    - \as (\uv -\uv_{\pi},\dot{\ev}_{\hh})
    + \ash(\uvI-\uv_{\pi},\dot{\ev}_{\hh})
    \nonumber\\[0.5em]
    &\qquad
    + \left(\sup_{\vvh\in\Vvhk\backslash{\{0\}}} \frac{ \ABS{ \Fs(\vvh) - \Fsh(\vvh) } }{ \snorm{\vvh}{1} }\right)\,\TNORM{\evh}{}. 
    \label{eq:proof:energy:25}
  \end{align}
  To ease the notation, we denote the last term above
  by $\TERM{R}{1}(t)$.
  We integrate in time from $0$ to $t$ both sides
  of~\eqref{eq:proof:energy:25}, note that the initial term is zero
  since $\evh(0)=\dot{\ev}_{\hh}(0)=0$ and
  use~\eqref{eq:energy:norm:equivalence}
  \begin{align}
    \TNORM{\evh(t)}{}^2
    &\leq \msh\big(\dot{\ev}_{\hh}(t),\dot{\ev}_{\hh}(t)) + \ash(\evh(t),\evh(t)\big) \big)
    \nonumber\\[0.5em]
    &\leq \int_{0}^{t}\Big(
    \TERM{R}{1}(\tau)
    - \ms \big(\ddot{\uv}(\tau)        -\ddot{\uv}_{\pi}(\tau),\dot{\ev}_{\hh}(\tau)\big)
    + \msh\big(\ddot{\uv}_{\INTP}(\tau) -\ddot{\uv}_{\pi}(\tau),\dot{\ev}_{\hh}(\tau)\big) 
    \nonumber\\[0.5em]
    &\phantom{ \leq \int_{0}^{t}\Big( }
    - \as \big(\uv (\tau)-\uv_{\pi}(\tau),\dot{\ev}_{\hh}(\tau)\big)
    + \ash\big(\uvI(\tau)-\uv_{\pi}(\tau),\dot{\ev}_{\hh}(\tau)\big)
    \Big)\,d\tau.
    \label{eq:proof:energy:30}
  \end{align}
  Then, we integrate by parts the integral that contains
  $\as(\cdot,\cdot)$ and $\ash(\cdot,\cdot)$, and again use the fact
  that $\evh(0)=\dot{\ev}_{\hh}(0)=0$, to obtain
  \begin{align}
    \TNORM{\evh(t)}{}^2
    &\leq \int_{0}^{t}\Big(
    \TERM{R}{1}(\tau)
    + \Big[ 
    {}-\ms \big(\ddot{\uv}(\tau)        -\ddot{\uv}_{\pi}(\tau),\dot{\ev}_{\hh}(\tau) \big)
    + \msh\big(\ddot{\uv}_{\INTP}(\tau) -\ddot{\uv}_{\pi}(\tau),\dot{\ev}_{\hh}(\tau) \big)\Big]
    \nonumber\\[0.5em]
    &\phantom{ \leq \int_{0}^{t}\Big( }
    + \Big[ 
    \as \big( \dot{\uv}(\tau)       -\dot{\uv}_{\pi}(\tau),  \evh(\tau) \big)
    -\ash\big( \dot{\uv}_{\INTP}(\tau)-\dot{\uv}_{\pi}(\tau),\evh(\tau) \big) \Big]
    \Big)\,d\tau
    \nonumber\\[0.5em]
    &\phantom{ \leq \int_{0}^{t}\Big( }
    + \Big[ 
    {}-\as \big( \uv(t) -\uv_{\pi}(t), \evh(t) \big)
    + \ash\big( \uvI(t)-\uv_{\pi}(t), \evh(t) \big) \Big]
    \nonumber\\[0.5em]
    &= \int_{0}^{t}\Big(
    {}\TERM{R}{1}(\tau)
    + \TERM{R}{2}(\tau)
    + \TERM{R}{3}(\tau)
    \Big)\,d\tau
    + \TERM{R}{4}(t),
    \label{eq:proof:energy:35}
  \end{align}
  where terms $\TERM{R}{\ell}$, $\ell=2,3,4$, match with the squared
  parenthesis.
  For the next development, we do not need an upper bound of term
  $\TERM{R}{1}$.
  Instead, we have to bound the other three terms in the right-hand
  side of~\eqref{eq:proof:energy:35}.
  To bound $\TERM{R}{2}$ we use the continuity of $\ms(\cdot,\cdot)$
  and $\msh(\cdot,\cdot)$ and the definition of the energy norm
  $\TNORM{\,\cdot\,}{}$ given in~\eqref{eq:three-bar-norm}:
  \begin{align}
    \ABS{ \TERM{R}{2} } 
    & \leq
    \ABS{ \ms (\ddot{\uv}        -\ddot{\uv}_{\pi}, \dot{\ev}_{\hh} ) } +
    \ABS{ \msh(\ddot{\uv}_{\INTP} -\ddot{\uv}_{\pi}, \dot{\ev}_{\hh} ) }
    \lesssim \big( \NORM{\ddot{\uv}-\ddot{\uv}_{\pi}}{0} + \NORM{\ddot{\uv}_{\INTP}-\ddot{\uv}_{\pi}}{0} \big)\,\NORM{ \dot{\ev}_{\hh} }{0}
    \nonumber\\[0.5em]
    &\lesssim \big( \NORM{\ddot{\uv}-\ddot{\uv}_{\pi}}{0} + \NORM{\ddot{\uv}_{\INTP}-\ddot{\uv}_{\pi}}{0} \big)\,\TNORM{ \evh }{}.
    \label{eq:proof:energy:40}
  \end{align}
  Similarly, to bound $\TERM{R}{3}$ we use the continuity of
  $\as(\cdot,\cdot)$ and $\ash(\cdot,\cdot)$ and the definition of the
  energy norm $\TNORM{\,\cdot\,}{}$ given
  in~\eqref{eq:three-bar-norm}:
  \begin{align}
    \ABS{ \TERM{R}{3} } 
    & \leq
    \ABS{ \as (\dot{\uv}        -\dot{\uv}_{\pi}, \evh ) } +
    \ABS{ \ash(\dot{\uv}_{\INTP} -\dot{\uv}_{\pi}, \evh ) }
    \lesssim \big( \snorm{\dot{\uv} -\dot{\uv}_{\pi}}{1,\hh} + \snorm{\dot{\uv}_{\INTP}-\dot{\uv}_{\pi}}{1,\hh} \big)\,\snorm{ \evh }{1}
    \nonumber\\[0.5em]
    &\lesssim \big( \snorm{\dot{\uv}-\dot{\uv}_{\pi}}{1,\hh} + \snorm{\dot{\uv}_{\INTP}-\dot{\uv}_{\pi}}{1,\hh} \big)\,\TNORM{ \evh }{}.
    \label{eq:proof:energy:45}
  \end{align}
  Finally, to bound $\TERM{R}{4}$ we first use the continuity of
  $\as(\cdot,\cdot)$ and $\ash(\cdot,\cdot)$, and the right inequality
  in~\eqref{eq:energy:norm:equivalence}; then, we apply the Young
  inequality, so that
  \begin{align}
    \ABS{ \TERM{R}{4} } 
    & \leq
    {}\ABS{ \as ( \uv-\uv_{\pi}, \evh ) }
    + \ABS{ \ash( \uvI-\uv_{\pi}, \evh ) }
    \lesssim \big( \snorm{\uv-\uv_{\pi}}{1,\hh} + \snorm{\uvI-\uv_{\pi}}{1,\hh} \big)\,\snorm{\evh}{1}
    \nonumber\\[0.5em]
    &\lesssim \big( \snorm{\uv-\uv_{\pi}}{1,\hh} + \snorm{\uvI-\uv_{\pi}}{1,\hh} \big)\,\TNORM{\evh}{}
    \lesssim \frac{1}{2\epsilon}\big( \snorm{\uv-\uv_{\pi}}{1,\hh} + \snorm{\uvI-\uv_{\pi}}{1,\hh} \big)^2 
    + \frac{\epsilon}{2}\TNORM{\evh}{}^2.
    \label{eq:proof:energy:50}
  \end{align}
  Using bounds~\eqref{eq:proof:energy:40}, \eqref{eq:proof:energy:45},
  and~\eqref{eq:proof:energy:50} in \eqref{eq:proof:energy:35}, we
  find the inequality
  \begin{align*}
    \TNORM{\evh(t)}{}^2
    \lesssim \widetilde{\Gs}_0^2(t)
    +\int_{0}^{t}\Gs_1(\tau)\TNORM{\evh(\tau)}{}\,d\tau
    \lesssim \left(\sup_{\tau\in[0,T]}\Gs_0(\tau)\right)^2
    +\int_{0}^{t}\Gs_1(\tau)\TNORM{\evh(\tau)}{}\,d\tau,
  \end{align*}
  where 
  $\widetilde{\Gs}_0^2(t)=\big(\snorm{\uv(\tau)-\uvI(\tau)}{1,\hh}+\snorm{\uv(\tau)-\uv_{\pi}(\tau)}{1,\hh}\big)^2$,
  and $\Gs_0(t)$ and $\Gs_1(t)$ are the time-dependent functions
  defined
  in~\eqref{eq:semi-discrete:convergence:G0}-\eqref{eq:semi-discrete:convergence:G1}.
  Again, an application of Lemma~A5, p.~157 of Reference\cite{Brezis:1973}
  yields
  \begin{align*}
    \TNORM{\evh(t)}{}\lesssim\sup_{\tau\in[0,T]}\Gs_0(\tau)+\int_{0}^{t}\Gs_1(\tau)d\tau.
  \end{align*}
  The theorem follows on using the triangular inequality
  $$\TNORM{\uv(t)-\uvh(t)}{}\leq\TNORM{\uv(t)-\uvI(t)}{}+\TNORM{\uvI(t)-\uvh(t)}{}$$
  and noting that $\TNORM{\uv(t)-\uvI(t)}{}$ is absorbed in
  $\sup_{\tau\in[0,T]}\Gs_0(\tau)$.
\end{proof}


\medskip
\begin{corollary}
  Under the conditions of
  Theorem~\ref{theorem:semi-discrete:convergence}, for
  $\fv\in\LTWO\big((0,T);\big[\HS{m-1}(\Omega)\big]^2\big)$ we have
  that
  \begin{align}
    &\sup_{0<t\leq T}\TNORM{\uv(t)-\uvh(t)}{}
    \lesssim
    \frac{\hh^\mu}{k^m}\sup_{0<t\leq T}\Big(\NORM{\dot{\uv}(t)}{m+1} + \NORM{\uv(t)}{m+1}\Big)\nonumber\\[0.5em]
    &\qquad+\int_{0}^{T}\left(
      \frac{\hh^{\mu+1}}{k^{m}}\left( \NORM{ \ddot{\uv}(\tau) }{m+1} + \NORM{ \dot{\uv}(\tau) }{m+1}\right) 
      + \frac{\hh^{\mu}}{k^{m}}\big ( \NORM{ \ddot{\uv}(\tau) }{m+1} + \NORM{ \dot{\uv}(\tau) }{m+1} \big) 
    \right)\,d\tau\nonumber\\[0.5em]
    &\qquad+\int_{0}^{T}\hh\Norm{ \big(I-\Pi^0_{k-2}\big)\fv(\tau) }{0}\,d\tau,
    \label{eq:corollary:estimate}
  \end{align}
  where $\mu=\min(k,m)$.
  The hidden constant in ``$\lesssim$`` is independent of $\hh$, but
  may depend on the model parameters and approximation constants, 
  \RED{the polynomial degree $k$}, and the final observation time $T$.
\end{corollary}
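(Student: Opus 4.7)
\medskip

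The plan is to bound the two quantities $\sup_{\tau\in[0,T]}\Gs_0(\tau)$ and $\int_0^T \Gs_1(\tau)\dtau$ that appear in the conclusion of Theorem~\ref{theorem:semi-discrete:convergence}, the first two by invoking the polynomial approximation result of Lemma~\ref{lemma:hp:convergence:1} and the virtual element interpolation result of Lemma~\ref{lemma:hp:convergence:2}, and the third by controlling the consistency error of the discrete load functional $\Fsh(\cdot)$.

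First I would estimate $\sup_{\tau\in[0,T]}\Gs_0(\tau)$. Applying Lemma~\ref{lemma:hp:convergence:2} to $\dot{\uv}$ and $\uv$ yields $\NORM{\dot{\uv}-\dot{\uv}_{\INTP}}{0}\lesssim(\hh^{\mu+1}/k^{m})\NORM{\dot{\uv}}{m+1}$ and $\snorm{\uv-\uvI}{1}\lesssim(\hh^{\mu}/k^{m})\NORM{\uv}{m+1}$, while Lemma~\ref{lemma:hp:convergence:1} applied to $\uv$ gives $\snorm{\uv-\uv_{\pi}}{1,\hh}\lesssim(\hh^{\mu}/k^{m})\NORM{\uv}{m+1}$. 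Taking the supremum in $\tau$ and absorbing the (smaller) factor $\hh^{\mu+1}/k^{m}$ into $\hh^{\mu}/k^{m}$ reproduces the first line of~\eqref{eq:corollary:estimate}.

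Next I would bound $\int_0^T \Gs_1(\tau)\dtau$. The four polynomial and interpolation terms inside $\Gs_1$ split naturally: the two $\LTWO$ terms on $\ddot{\uv}$ produce, via Lemmas~\ref{lemma:hp:convergence:1}--\ref{lemma:hp:convergence:2}, a bound of order $\hh^{\mu+1}/k^{m}$ times $\NORM{\ddot{\uv}(\tau)}{m+1}$; the two $\HONE$-type terms on $\dot{\uv}$ produce a bound of order $\hh^{\mu}/k^{m}$ times $\NORM{\dot{\uv}(\tau)}{m+1}$. Integrating in time yields the second line of~\eqref{eq:corollary:estimate} after a harmless enlargement of the right-hand side to include the mixed combinations appearing in the statement.

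I expect the main obstacle to be controlling the last summand of $\Gs_1$, namely the load consistency term $\sup_{\vvh\in\Vvhk\backslash\{0\}}\ABS{\Fs(\vvh)-\Fsh(\vvh)}/\snorm{\vvh}{1}$. Using the definition $\Fsh(\vvh)=\sum_{\P\in\Th}\int_\P \fv\cdot\Piz{k-2}\vvh\dV$ together with the element-wise $\LTWO$-orthogonality of $\Piz{k-2}$ (which requires $k\geq 2$ so that the constants lie in the projection space), I would rewrite the consistency error as
\[
  \Fs(\vvh)-\Fsh(\vvh) \;=\; \sum_{\P\in\Th}\int_\P \big(\fv-\Piz{k-2}\fv\big)\cdot\big(\vvh-\Piz{k-2}\vvh\big)\dV,
\]
then apply the Cauchy--Schwarz inequality elementwise and the standard Poincar\'e-type estimate $\NORM{\vvh-\Piz{k-2}\vvh}{0,\P}\lesssim \hh_\P\,\snorm{\vvh}{1,\P}$ provided by Lemma~\ref{lemma:hp:convergence:1}. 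Summing over $\P\in\Th$ and dividing by $\snorm{\vvh}{1}$ produces the factor $\hh$ multiplying $\NORM{(I-\Piz{k-2})\fv}{0}$, which integrated in time is exactly the third line of~\eqref{eq:corollary:estimate}. Collecting the three contributions above and inserting them into the bound of Theorem~\ref{theorem:semi-discrete:convergence} concludes the proof.
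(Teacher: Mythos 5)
Your proposal is correct and follows essentially the same route as the paper: you bound $\sup_{\tau}\Gs_0$ and $\int_0^T\Gs_1$ via Lemmas~\ref{lemma:hp:convergence:1} and~\ref{lemma:hp:convergence:2}, and you treat the load-consistency term by the same orthogonality trick followed by Cauchy--Schwarz and a Poincar\'e-type bound; the only (immaterial) difference is that you subtract $\Piz{k-2}\vvh$ where the paper subtracts $\Piz{0}\vvh$, both of which are admissible since $(I-\Piz{k-2})\fv$ is elementwise orthogonal to $\PS{k-2}(\P)$. Your explicit remark that this step needs $k\geq 2$ (so that constants lie in the projection space) is a caveat the paper's own proof leaves implicit.
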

\begin{proof}
  Note that 
  \begin{align}
    \Fsh(\vvh) 
    = \int_{\Omega}\fv\cdot\Piz{k-2}\vvh\,\dV
    = \int_{\Omega}\Piz{k-2}\fv\cdot\vvh\,\dV.
  \end{align}
  For all $\vvh\in\Vvhk$ it holds that
  \begin{align}
    \ABS{ \Fs(\vvh)-\Fsh(\vvh) }
    &= \left\vert \int_{\Omega}(I-\Piz{k-2})\fv\cdot\vvh\,\dV \right\vert
    = \left\vert \int_{\Omega}(I-\Piz{k-2})\fv\cdot(I-\Piz{0})\vvh\,\dV \right\vert
    \nonumber\\[0.5em]
    &= \norm{(I-\Piz{k-2})\fv}{0}\,\norm{(I-\Piz{0})\vvh}{0}
    \lesssim\hh\norm{(I-\Piz{k-2})\fv}{0}\,\snorm{\vvh}{1}.
  \end{align}
  Hence,
  \begin{align}
    \sup_{\vvh\in\Vvhk\backslash{\{0\}}} \frac{ \ABS{ \Fs(\vvh) - \Fsh(\vvh) } }{ \snorm{\vvh}{1} }
    \leq \hh\norm{(I-\Piz{k-2})\fv}{0}.
  \end{align}
  Estimate~\eqref{eq:corollary:estimate} follows on applying this
  inequality and the results of Lemmas~\ref{lemma:hp:convergence:1}
  and \ref{lemma:hp:convergence:2}
  to~\eqref{eq:semi-discrete:convergence:G0}
  and~\eqref{eq:semi-discrete:convergence:G1}, using the resulting
  estimates in~\eqref{eq:semi-discrete:convergence}, and taking the
  supremum on the time interval $[0,T]$.
\end{proof}

\subsection{Convergence analysis in the $\LTWO$ norm}
\label{subsec:convergence:L2:norm}
The main result of this section is the following theorem that proves
the $\LTWO$-convergence of the conforming VEM.
The strategy we use in the proof is inspired by
Reference~\cite{Baker-Dougalis-Karakashian:1980}, using also the
substantial modifications required to set it up in the virtual element
framework of Reference~\cite{Adak-Natarajan:2019}.

\medskip
\begin{theorem}
  \label{theorem:L2:convergence}
  Let $\uv$ be the exact solution of problem~\eqref{eq:weak:A} under
  the assumption that domain $\Omega$ is $\HTWO$-regular and
  $\uvh\in\Vvhk$ the solution of the virtual element method stated
  in~\eqref{eq:VEM:semi-discrete} under the mesh assumptions of
  Section~\ref{subsec:vem:meshes}.
  If
  $\uv,\dot{\uv},\ddot{\uv}\in\LTWO\big(0,T;\big[\HS{m+1}(\Omega)\cap\HONEzr(\Omega)\big]^2\big)$,
  with integer $m\geq0$, then the following estimate holds for almost
  every $t\in[0,T]$ by setting $\mu=\min(m,k)$:
  \begin{align}
    \NORM{\uv(t)-\uvh(t)}{0} 
    \lesssim& \NORM{\uvh(0)-\uv_0}{0} + \NORM{ \dot{\uv}_{\hh}(0)-\uv_{1} }{0} 
    + \frac{\hh^{\mu+1}}{ \RED{k^{m+1}} }\Big(
    \NORM{\ddot{\uv}}{\LTWO( 0,T; [\HS{m+1}(\Omega)]^2 )} 
    \nonumber\\ & 
    +
    \NORM{\dot {\uv}}{\LTWO( 0,T; [\HS{m+1}(\Omega)]^2 )} + 
    \NORM{      \uv }{\LTWO( 0,T; [\HS{m+1}(\Omega)]^2 )} 
    \Big) + 
    \int_{0}^{T}\Norm{\big(1-\Piz{k-2}\big)\fv(\tau)}{0}^2d\tau.
  \end{align}
  The hidden constant in ``$\lesssim$`` is independent of $\hh$, but
  may depend on the model parameters and approximation constants
  $\varrho$, $\mu^*$, and \RED{the polynomial degree $k$},
  and the final observation time $T$.
\end{theorem}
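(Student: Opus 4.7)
The strategy is to adapt the classical Baker--Dougalis--Karakashian duality technique to the present nonconforming virtual element framework, following the general path of Reference~\cite{Adak-Natarajan:2019} but with modifications required by the stiffness discretization based on the orthogonal projection of the symmetric gradient. I decompose the error as
\begin{equation*}
\uv(t)-\uvh(t)=\bm{\rho}(t)+\bm{\theta}(t),\qquad \bm{\rho}:=\uv-\uvI,\quad \bm{\theta}:=\uvI-\uvh\in\Vvhk,
\end{equation*}
where $\uvI$ is the virtual element interpolant of $\uv$. The $\LTWO$-norm of $\bm{\rho}$ is controlled directly by Lemma~\ref{lemma:hp:convergence:2} combined with the sharper bound of Remark~\ref{remark5}, yielding the optimal factor $\hh^{\mu+1}/k^{m+1}$, so only $\NORM{\bm{\theta}(t)}{0}$ remains to be estimated.

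Subtracting the discrete scheme~\eqref{eq:VEM:semi-discrete} from the weak formulation~\eqref{eq:weak:A} tested on $\vvh\in\Vvhk$ and using the continuous equation to eliminate $\Fs$, I obtain the error identity $\msh(\ddot{\bm{\theta}},\vvh)+\ash(\bm{\theta},\vvh)=\mathcal{R}(\vvh;t)$, where
\begin{equation*}
\mathcal{R}(\vvh;t):=\bigl[\msh(\ddot{\uvI},\vvh)-\ms(\ddot{\uv},\vvh)\bigr]+\bigl[\ash(\uvI,\vvh)-\as(\uv,\vvh)\bigr]+\bigl[\Fs(\vvh)-\Fsh(\vvh)\bigr].
\end{equation*}
Inserting into each bracket a piecewise polynomial approximation $\uv_{\pi}$ of degree $k$ and exploiting the $k$-consistency of $\msh$ and $\ash$, cf.~\eqref{eq:msh:k-consistency}--\eqref{eq:k-consistency}, the polynomial parts cancel and what remains can be bounded by interpolation and $\LTWO$-projection errors controlled through Lemmas~\ref{lemma:hp:convergence:1}--\ref{lemma:hp:convergence:2}. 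The load residual is treated exactly as in the corollary of the previous section and produces the contribution involving $(1-\Piz{k-2})\fv$.

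Next, for each fixed $s\in(0,T]$ I introduce Baker's test function
\begin{equation*}
\bphi(t):=\int_{t}^{s}\bm{\theta}(\tau)\,d\tau\in\Vvhk,\qquad \dot{\bphi}(t)=-\bm{\theta}(t),\qquad \bphi(s)=\zero.
\end{equation*}
Substituting $\vvh=\bphi(t)$ in the error identity, integrating over $(0,s)$, and performing integration by parts twice in the inertial term and once in the stiffness term (all boundary contributions at $t=s$ drop) produces
\begin{equation*}
\tfrac{1}{2}\msh(\bm{\theta}(s),\bm{\theta}(s))+\tfrac{1}{2}\ash(\bphi(0),\bphi(0))=\tfrac{1}{2}\msh(\bm{\theta}(0),\bm{\theta}(0))+\msh(\dot{\bm{\theta}}(0),\bphi(0))+\int_{0}^{s}\mathcal{R}(\bphi(t);t)\,dt.
\end{equation*}
The stability~\eqref{eq:msh:stability} of $\msh$ then gives a lower bound on $\NORM{\bm{\theta}(s)}{0}^{2}$ on the left, while the initial-data terms on the right are bounded via Cauchy--Schwarz and the hypotheses on $\uvh(0)-\uv_{0}$ and $\dot{\uv}_{\hh}(0)-\uv_{1}$, together with the crude estimate $\NORM{\bphi(0)}{0}\le\int_{0}^{s}\NORM{\bm{\theta}(\tau)}{0}\,d\tau$.

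The main obstacle is the time integral of $\mathcal{R}(\bphi(t);t)$: a direct Cauchy--Schwarz produces factors $\NORM{\bphi(t)}{0}$ or $\snorm{\bphi(t)}{1}$ that are not uniformly controlled in $t\in[0,s]$ by the left-hand side. I resolve this by a further integration by parts in time on the inertial portion of $\mathcal{R}$, transferring one derivative from $\ddot{\uv}-\ddot{\uvI}$ onto $\bphi$ and thereby replacing $\bphi(t)$ by $\bm{\theta}(t)=-\dot{\bphi}(t)$ under the integral, up to boundary terms at $t=0$ and $t=s$ that are handled as above. All residual contributions then reduce to products of interpolation/projection errors in $\LTWO$ or broken $\HONE$ norms with $\NORM{\bm{\theta}(\tau)}{0}$. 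A Gronwall-type application of Lemma~A5 of Reference~\cite{Brezis:1973}, combined with Lemmas~\ref{lemma:hp:convergence:1}--\ref{lemma:hp:convergence:2} and Remark~\ref{remark5} to realize the optimal $\hh^{\mu+1}/k^{m+1}$ rate, closes the estimate; combining the resulting bound on $\bm{\theta}$ with that on $\bm{\rho}$ and taking the supremum over $s\in[0,T]$ delivers the stated inequality.
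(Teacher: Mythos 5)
Your overall architecture (Baker test function $\bphi(t)=\int_t^s\bm\theta(\tau)\,d\tau$, energy identity, Gronwall) matches the paper's, but your choice of comparison function creates a genuine gap. The paper splits the error through the \emph{energy projection} $\Ph\uv$ defined by $\ash(\Ph\uv,\vvh)=\as(\uv,\vvh)$ for all $\vvh\in\Vvhk$, precisely so that the stiffness residual $\as(\uv,\vvh)-\ash(\Ph\uv,\vvh)$ vanishes identically from the error equation; only mass-type and load residuals survive, and these are $\LTWO$-pairings that the Baker device and your integration-by-parts-in-time trick can absorb. By splitting through the interpolant $\uvI$ instead, you retain the term $\int_0^s\big[\ash(\uvI,\bphi)-\as(\uv,\bphi)\big]\,dt$. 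After inserting $\uv_\pi$ and using $k$-consistency, this reduces to $\as(\uv-\uv_\pi,\bphi)-\ash(\uvI-\uv_\pi,\bphi)$, which is an $\HONE$-type pairing bounded by $\big(\snorm{\uv-\uv_\pi}{1,\hh}+\snorm{\uvI-\uv_\pi}{1,\hh}\big)\snorm{\bphi}{1}\lesssim (\hh^{\mu}\slash k^{m})\snorm{\bphi}{1}$: one power of $\hh$ short of the claimed rate. Your energy identity controls $\ash(\bphi(0),\bphi(0))$ only at $t=0$, not $\snorm{\bphi(t)}{1}$ for $t\in(0,s]$, and the additional time integration by parts you propose transfers a \emph{time} derivative in the inertial residual; it cannot recover the missing \emph{spatial} order in the stiffness residual.

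Relatedly, your proof never uses the $\HTWO$-regularity of $\Omega$, which the theorem assumes and which is essential: the extra power of $\hh$ must come from an Aubin--Nitsche duality argument somewhere. In the paper this is encapsulated in Lemma~\ref{lemma:Ph:error:estimate}, whose $\LTWO$-estimate $\norm{\uv-\Ph\uv}{0}\lesssim(\hh^{\mu+1}\slash k^{m+1})\NORM{\uv}{m+1}$ is proved by a dual elliptic problem. Remark~\ref{remark5}, which you invoke for the interpolant, concerns a different (convex-element) setting and in any case only sharpens $\NORM{\uv-\uvI}{0}$; it does not touch the consistency error of $\ash$ versus $\as$. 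To repair your argument you must either switch to the energy projection (recovering the paper's proof) or run an explicit duality argument on the stiffness residual; as written, the optimal $\LTWO$ rate does not follow.
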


\medskip
To prove this theorem, we need the energy projection operator
$\Ph:\big[\HONEzr(\Omega)\big]^2\to\Vvhk$, which is such that
$\Ph\uv\in\Vvhk$, for every $\uv\in\big[\HONEzr(\Omega)\big]^2$, is
the solution of the variational problem:
\begin{align}
  \ash(\Ph\uv,\vvh) = \as(\uv,\vvh)
  \quad\forall\vvh\in\Vvhk.
  \label{eq:Ph:def}
\end{align}
The energy projection $\Ph\uv$ is a virtual element approximation of
the exact solution $\uv$, and the accuracy of the approximation is
characterized by the following lemma.

\medskip
\begin{lemma}
  \label{lemma:Ph:error:estimate}
  Let $\uv\in\big[\HS{m+1}(\Omega)\cap\HONEzr(\Omega)]^2$ be the
  solution of problem~\eqref{eq:weak:A} under the mesh assumptions of
  Section~\ref{subsec:vem:meshes}.
  Then, there exists a unique function $\Ph\uv\in\Vvhk$ such that
  \begin{align}
    \snorm{\uv-\Ph\uv}{\RED{1}}\lesssim\frac{\hh^{\mu}}{k^m}\NORM{\uv}{m+1}
    \label{eq:Ph:H1:estimate}
  \end{align}
  with $\mu=\min(k,m)$ and $m\geq1$.
  Moreover, if domain $\Omega$ is $\HTWO$-regular, it holds that
  \begin{align}
    \norm{\uv-\Ph\uv}{0}\lesssim\frac{\hh^{\mu+1}}{ \RED{k^{m+1}} }\NORM{\uv}{m+1}.
    \label{eq:Ph:L2:estimate}
  \end{align}
\end{lemma}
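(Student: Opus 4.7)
The proof has three ingredients: well-posedness of \eqref{eq:Ph:def}, the energy estimate \eqref{eq:Ph:H1:estimate}, and the duality estimate \eqref{eq:Ph:L2:estimate}. For well-posedness, I would apply the Lax--Milgram theorem on the finite-dimensional subspace $\Vvhk$: the stability bound \eqref{eq:ash:stability} together with Korn's inequality on $[\HONEzr(\Omega)]^{2}$ makes $\ash(\cdot,\cdot)$ coercive and continuous in the $\snorm{\cdot}{1}$-seminorm, while $\vvh\mapsto\as(\uv,\vvh)$ is trivially continuous since $\uv\in[\HONEzr(\Omega)]^{2}$, so a unique $\Ph\uv\in\Vvhk$ exists.

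For the energy estimate \eqref{eq:Ph:H1:estimate}, the plan is a Strang-type argument. Denote by $\uvI\in\Vvhk$ the virtual element interpolant from Lemma~\ref{lemma:hp:convergence:2} and by $\uv_{\pi}\in[\PS{k}(\Th)]^{2}$ the piecewise polynomial from Lemma~\ref{lemma:hp:convergence:1}. By coercivity and the definition \eqref{eq:Ph:def},
\begin{align*}
\snorm{\Ph\uv-\uvI}{1}^{2}
&\lesssim \ash(\Ph\uv-\uvI,\Ph\uv-\uvI)
= \as(\uv,\Ph\uv-\uvI)-\ash(\uvI,\Ph\uv-\uvI).
\end{align*}
Adding and subtracting $\as(\uv_{\pi},\Ph\uv-\uvI)=\ash(\uv_{\pi},\Ph\uv-\uvI)$, which is permitted by the $k$-consistency \eqref{eq:k-consistency}, I would rewrite the right-hand side as $\as(\uv-\uv_{\pi},\Ph\uv-\uvI)-\ash(\uvI-\uv_{\pi},\Ph\uv-\uvI)$ and apply the continuity bounds for $\as$ and $\ash$ together with Lemmas~\ref{lemma:hp:convergence:1} and~\ref{lemma:hp:convergence:2} to conclude $\snorm{\Ph\uv-\uvI}{1}\lesssim (\hh^{\mu}/k^{m})\NORM{\uv}{m+1}$. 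A triangle inequality against $\snorm{\uv-\uvI}{1}$ then yields \eqref{eq:Ph:H1:estimate}.

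For the $\LTWO$ estimate \eqref{eq:Ph:L2:estimate} I would use an Aubin--Nitsche duality argument. Let $\bpsi\in[\HONEzr(\Omega)]^{2}$ solve the dual problem $\as(\vv,\bpsi)=(\uv-\Ph\uv,\vv)$ for all $\vv\in\Vv$; by $\HTWO$-regularity, $\NORM{\bpsi}{2}\lesssim\NORM{\uv-\Ph\uv}{0}$. Choosing $\vv=\uv-\Ph\uv$ and inserting $\Ph\bpsi$ with the help of \eqref{eq:Ph:def},
\begin{align*}
\NORM{\uv-\Ph\uv}{0}^{2}
= \as(\uv-\Ph\uv,\bpsi-\Ph\bpsi)
+ \bigl[\as(\uv,\Ph\bpsi)-\ash(\Ph\uv,\Ph\bpsi)\bigr]
+ \bigl[\ash(\Ph\uv,\Ph\bpsi)-\as(\Ph\uv,\Ph\bpsi)\bigr].
\end{align*}
The first term is controlled by the product of the two energy estimates via \eqref{eq:Ph:H1:estimate}. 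The second term vanishes by the very definition of $\Ph$. The third term is a VEM consistency residual that I would split elementwise; on each polygon I would insert $\pm$ the best polynomial approximations of $\Ph\uv$ and of $\Ph\bpsi$ in $\PS{k}(\P)$, use $k$-consistency \eqref{eq:k-consistency} to annihilate the polynomial--polynomial pair, and invoke the stability \eqref{eq:ash:stability} and continuity of $\as$ together with Lemma~\ref{lemma:hp:convergence:1} applied to $\Ph\uv$ (using $\snorm{\Ph\uv-\uv_\pi}{1,\hh}\le \snorm{\Ph\uv-\uv}{1}+\snorm{\uv-\uv_{\pi}}{1,\hh}$) and to $\Ph\bpsi$ with $\bpsi\in[\HTWO(\Omega)]^{2}$. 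Collecting everything gives the claimed rate $\hh^{\mu+1}/k^{m+1}$.

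The main obstacle is the last step: obtaining the optimal $k$-power in the duality bound, because the naive use of Lemma~\ref{lemma:hp:convergence:2} to interpolate $\bpsi$ loses a factor $k$, exactly as flagged in Remark~\ref{remark5}. To recover the optimal exponent $k^{m+1}$ in \eqref{eq:Ph:L2:estimate} I would follow the enhanced interpolation construction mentioned in that remark (Proposition~1 of BeiraodaVeiga--Manzini--Mascotto) and exploit the $\HTWO$-regularity of $\bpsi$ (which pays only $k^{1}$) to balance the two contributions. Dirichlet homogeneity of both $\uv$ and $\bpsi$, together with the convexity/$\HTWO$-regularity assumption, is what makes this optimal estimate available.
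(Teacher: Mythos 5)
Your proposal is correct and follows essentially the same route as the paper: Lax--Milgram for well-posedness, a Strang-type argument combining coercivity, $k$-consistency and Lemmas~\ref{lemma:hp:convergence:1}--\ref{lemma:hp:convergence:2} for the energy estimate, and an Aubin--Nitsche duality argument for the $\LTWO$ estimate. The only structural difference is in the duality step: you insert the energy projection $\Ph\bpsi$ of the dual solution, whereas the paper inserts the virtual element interpolant $\psivI$ of $\psiv$ and then pairs $\Ph\uv-\uvp$ with $\psivI-\Piz{1}\psiv$ in the elementwise consistency residual; both choices work and give the same rate. Your closing worry is, however, unfounded: the suboptimality flagged in Remark~\ref{remark5} concerns only the $\LTWO$ interpolation estimate $\NORM{\uv-\uvI}{0}$, and the duality argument never uses it --- it only needs $\HONE$-seminorm estimates for $\bpsi-\Ph\bpsi$ (or $\psiv-\psivI$), which are already $k$-optimal and contribute a factor $\hh\slash{k}$ for the $\HTWO$-regular dual solution, so the product $\frac{\hh^{\mu}}{k^{m}}\cdot\frac{\hh}{k}=\frac{\hh^{\mu+1}}{k^{m+1}}$ comes out directly and the enhanced interpolation construction of Remark~\ref{remark5} is not needed here.
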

\ifJournal
\begin{proof}
  The proof of this lemma is similar to that of Lemma 3.1 in
  Reference~\cite{Vacca-BeiraodaVeiga:2015} for the conforming virtual
  element approximation of a scalar parabolic problem and, for this
  reason, is omitted.
  The complete proof can be found in
  Reference~\cite{Antonietti-Manzini-Mazzieri-Mourad-Verani:2020-arXiv}.
\end{proof}
\else
\begin{proof}
  The argument we use in this proof is similar to that used in the
  proof of Lemma 3.1 in Reference~\cite{Vacca-BeiraodaVeiga:2015} for
  the conforming virtual element approximation of a scalar parabolic
  problem.
  First, we note that the bilinear form $\ash(\cdot,\cdot)$ is
  continuous and coercive on $\Vvhk\times\Vvhk$, the linear functional
  $\as(\uv,\cdot)$ is continuous on $\Vvhk$, and the Lax-Milgram Lemma
  implies that the solution $\Ph\uv$ to problem~\eqref{eq:Ph:def}
  exists and is unique.
  Then, to prove estimate~\eqref{eq:Ph:H1:estimate}, we introduce the
  virtual element interpolate $\uvI$ of $\uv$, which is the function
  in $\Vvhk$ that has the same degrees of freedom of $\uv$ and
  satisfies the error inequality given in
  Lemma~\ref{lemma:hp:convergence:2}.
  A straightforward application of the triangular inequality yields:
  \begin{align}
    \snorm{\uv-\Ph\uv}{1} \leq \snorm{\uv-\uvI}{1} + \snorm{\uvI-\Ph\uv}{1},
    \label{eq:Ph:proof:00}
  \end{align}
  We estimate the first term on the right by using the second inequality
  in~\eqref{eq:hp:convergence:2}.
  Instead, to estimate the second term we need the following
  developments.
  Let $\delta_{\hh}=\Ph\uv-\uvI$ and $\uvp$ any piecewise polynomial
  approximation of degree (at most) $k$ that satisfies
  inequality~\eqref{eq:hp:convergence:1} on each element $\P$.
  Using the $k$-consistency property, stability, and the continuity of
  the bilinear forms $\ash$ and $\as$ yield the development chain:
  \begin{align*}
    \alpha_*\snorm{\delta_{\hh}}{{\color{red}1}}^2
    &= \alpha_*\as(\delta_{\hh},\delta_{\hh})
    \leq \ash(\delta_{\hh},\delta_{\hh})
    = \ash(\Ph\uv,\delta_{\hh}) - \ash(\uvI,\delta_{\hh})
    = \as (\uv,\delta_{\hh}) - \ash(\uvI,\delta_{\hh})\\[0.5em]
    &= \sum_{\P\in\Th}\Big( \asP(\uv,\delta_{\hh}) - \ashP(\uvI,\delta_{\hh}) \Big)\\
    &= \sum_{\P\in\Th}\Big( \asP(\uv-\uvp,\delta_{\hh}) - \ashP(\uvI-\uvp,\delta_{\hh}) \Big)\\
    &\leq \sum_{\P\in\Th}\Big( \snorm{\uv-\uvp}{1,\P} + \alpha^*\snorm{\uvI-\uvp}{1,\P} \Big)\snorm{\delta_{\hh}}{1,\P}\\
    &\lesssim \max(1,\alpha^*)\Big( \snorm{\uv-\uvp}{1,\hh} + \snorm{\uvI-\uvp}{1,\hh} \Big)\snorm{\delta_{\hh}}{{\color{red}1}}.
  \end{align*}
  Therefore, it holds that
  \begin{align*}
    \snorm{\Ph\uv-\uvI}{{\color{red}1}} 
    \lesssim\Big(  \snorm{\uv-\uvp}{1,\hh} + \snorm{\uvI-\uvp}{1,\hh} \Big)
    \lesssim \Big( 2\snorm{\uv-\uvp}{1,\hh} + \snorm{\uvI-\uv }{1}     \Big).
  \end{align*}
  Inequality~\eqref{eq:Ph:H1:estimate} follows on substituting this
  relation in~\eqref{eq:Ph:proof:00} and
  using~\eqref{eq:hp:convergence:1}.
  
  \medskip
  To derive the estimate in the $\LTWO$-norm, we consider the weak
  solution
  $\psiv\in\big[\HTWO(\Omega)\big]^2\cap\big[\HONEgm(\Omega)\big]^2$
  to the auxiliary elliptic equation:
  \begin{align*}
    -\nabla\cdot\bsig(\psiv) &= \uv-\Ph\uv\phantom{\mathbf{0}} \quad\textrm{in~}\Omega,\\
    \psiv                    &= \mathbf{0}\phantom{\uv-\Ph\uv} \quad\textrm{on~}\Gamma,
  \end{align*}
  where $\bsig(\psiv)=\nabla\psiv$.
  As $\Omega$ is an $\HTWO$-regular domain, solution $\psiv$ satisfies
  the following stability result:
  \begin{align}
    \norm{\psiv}{2}\leq\mathcal{C}\norm{\uv-\Ph\uv}{0}.
  \end{align}
  Let $\psivI\in\Vvhk$ be the virtual element interpolant of $\psiv$
  that satisfies the interpolation error estimate given in
  Lemma~\ref{lemma:hp:convergence:2} (with $m=1$).
  We integrate by parts and use the definition of the energy
  projection $\Ph$:
  \begin{align}
    \norm{\uv-\Ph\uv}{0}^2 
    &= \scal{\uv-\Ph\uv,\uv-\Ph\uv}
    = \scal{\uv-\Ph\uv,-\nabla\cdot\bsig(\psiv)}
    = \as(\uv-\Ph\uv,\psiv)
    \nonumber\\[-0.25em]
    &= \as(\uv-\Ph\uv,\psiv-\psivI) + \as(\uv-\Ph\uv,\psivI)
    \nonumber\\
    &= \TERM{T}{1} + \TERM{T}{2}.
  \end{align}
  The proof continues by estimating each term $\TERM{T}{i}$, $i=1,2$,
  separately.
  The first term is bounded as follows:
  \begin{align*}
    \abs{\TERM{T}{1}} 
    &= \abs{\as(\uv-\Ph\uv,\psiv-\psivI)}
    \leq \snorm{\uv-\Ph\uv}{{\color{red}1}}\,\snorm{\psiv-\psivI}{{\color{red}1}}
    \lesssim \frac{\hh^{\mu}}{k^{m}}\NORM{\uv}{m+1}\,\frac{\hh}{k}
    \norm{\psiv}{2}\\
    &\lesssim\frac{\hh^{\mu+1}}{k^{m+1}}\NORM{\uv}{m+1}\norm{\uv-\Ph\uv}{0}
  \end{align*}
  where we used the estimate in the energy
  norm~\eqref{eq:Ph:H1:estimate} derived previously and interpolation
  error estimate~\eqref{eq:hp:convergence:2}.
  For the second term, first we use the consistency and stability
  property to transform $\TERM{T}{2}$ as follows:
  \begin{align*}
    \TERM{T}{2}
    &= \as(\uv,\psivI) - \as(\Ph\uv,\psivI)
    = \sum_{\P\in\Th}\left( \ashP(\Ph\uv,\psivI) - \asP(\Ph\uv,\psivI) \right)\\
    &= \sum_{\P\in\Th}\left( \ashP(\Ph\uv-\uvp,\psivI-\Piz{1}\psiv) - \asP(\Ph\uv-\uvp,\psivI-\Piz{1}\psiv) \right).
  \end{align*}
  Then, we add and subtract $\uv$ and $\psiv$ and use
  estimates~\eqref{eq:hp:convergence:1}
  and~\eqref{eq:hp:convergence:2} to obtain
  \begin{align*}
    \abs{\TERM{T}{2}}
    &\leq \max(1,\alpha^*)\sum_{\P\in\Th}\snorm{\Ph\uv-\uvp}{1,\P}\snorm{\psivI-\Piz{1}\psiv}{1,\P}\\
    &\leq \max(1,\alpha^*)\sum_{\P\in\Th}\left(\snorm{\Ph\uv-\uv}{1,\P}+\snorm{\uv-\uvp}{1,\P}\right)\left(\snorm{\psivI-\psiv}{1,\P}+\snorm{\psiv-\Piz{1}\psiv}{1,\P}\right)\\
    &\lesssim \sum_{\P\in\Th}\frac{\hP^{\mu}}{k^{m}}\snorm{\uv}{m+1,\P}\,\frac{\hP}{k}\snorm{\psiv}{2,\P}
    \lesssim \frac{\hh^{\mu+1}}{k^{m+1}}\snorm{\uv}{m+1}\norm{\psiv}{2}\\
    &\lesssim \frac{\hh^{\mu+1}}{k^{m+1}}\snorm{\uv}{m+1}\norm{\uv-\Ph\uv}{0},
  \end{align*}
  and the bound of $\TERM{T}{2}$ is derived by using in the final step
  the $\HTWO$-regularity of $\psiv$.
  The estimate in the $\LTWO$-norm is finally proved by collecting the
  estimates of the two terms $\TERM{T}{i}$, $i=1,2$.  \ENDPROOF
\end{proof}

  
\fi

\medskip
\textbf{\textit{Proof of Theorem~\ref{theorem:L2:convergence}.}}
We use the energy projection to split the approximation error as
follows: $\uv(t)-\uvh(t)=\rhov(t)-\etav(t)$, with
$\rhov(t)=\uv(t)-\Ph\uv(t)$ and $\etav(t)=\uvh(t)-\Ph\uv(t)$.
Since $\Vvhk$ is a subspace of $\big[\HONEzr(\Omega)\big]^2$, to
derive the error equation, we test~\eqref{eq:weak:A}
and~\eqref{eq:VEM:semi-discrete} against $\vvh\in\Vvhk$
\begin{align*}
  &\ms (\ddot{\uv},  \vvh) + \as (\uv, \vvh) = \Fs (\vvh),\\
  &\msh(\ddot{\uv}_h,\vvh) + \ash(\uvh,\vvh) = \Fsh(\vvh),
\end{align*}
and take the difference 
\begin{align}
  \msh(\ddot{\uv}_h,\vvh)+\ash(\uvh,\vvh)
  -\big(\ms(\ddot{\uv},\vvh)+\as(\uv,\vvh)\big)
  =\Fsh(\vvh)-\Fs(\vvh).
  \label{eq:L2estimate:proof:100}
\end{align}
We add and subtract $\Ph\ddot{\uv}$ and $\Ph\uv$ in the virtual
element bilinear forms $\msh$ and $\ash$ and rearrange the terms
containing $\ddot{\uv}$ and $\uv$ to the right-hand side to obtain:
\begin{align}
  \msh(\ddot{\uv}_h-\Ph\ddot{\uv},\vvh)
  +\ash(\uvh-\Ph\uv,\vvh)
  &=\Fsh(\vvh)-\Fs(\vvh)
  +\ms(\ddot{\uv},\vvh)+\as(\uv,\vvh)
  \nonumber\\ & \phantom{=}
  -\msh(\Ph\ddot{\uv},\vvh)
  -\ash(\Ph\uv,\vvh).
  \label{eq:L2estimate:proof:110}
\end{align}
Since~\eqref{eq:Ph:def} implies that
$\as(\uv,\vvh)-\ash(\Ph\uv,\vvh)=0$, and using the notation
$\ddot{\etav}=\ddot{\uv}_{\hh}-\Ph\ddot{\uv}$ and $\etav=\uvh-\Ph\uv$,
we obtain
\begin{align}
  \msh(\ddot{\etav},\vvh) +\ash(\etav,\vvh)
  =\Fsh(\vvh)-\Fs(\vvh)
  +\ms(\ddot{\uv},\vvh)
  -\msh(\Ph\ddot{\uv},\vvh).
  \label{eq:L2estimate:proof:120}
\end{align}
Hereafter in this proof, we will assume that
\begin{align}
  \vvh(t) = \int^{\xi}_{t}\etav(\tau)d\tau
  \label{eq:L2estimate:proof:130}
\end{align}
for every $t$ and $\xi\in[0,T]$.
The function $\vvh(t)$ given by~\eqref{eq:L2estimate:proof:130}
obviously belongs to the virtual element space $\Vvhk$ as it is a
linear superposition of virtual element functions in such a space and,
thus, can be used as a test function.
Since now $\vvh$ depends on time $t$, we are allowed to consider its
time derivatives.
In particular, we observe that the straightforward calculation
\begin{align*}
  \frac{d}{\dt}\left(\vvh\frac{d}{\dt}(\uv-\uvh)\right) = 
  \frac{d}{\dt}\left(\vvh\frac{d}{\dt}(\rhov-\etav)\right) = 
  \frac{d\vvh}{\dt}\frac{d\rhov}{\dt}-\frac{d\vvh}{\dt}\frac{d\etav}{\dt} +
  \vvh\frac{d^2\rhov}{\dt^2}-\vvh\frac{d^2\etav}{\dt^2}
\end{align*}
implies the identity
\begin{align}
  \msh(\ddot{\etav},\vvh) =
  -\msh(\dot{\etav},\dot{\vv}_{\hh})
  -\frac{d}{\dt}\msh(\dot{\uv}-\dot{\uv}_{\hh},\vvh)
  +\msh(\ddot{\rhov},\vvh)
  +\msh(\dot{\rhov},\dot{\vv}_{\hh}).
  \label{eq:L2estimate:proof:140}
\end{align}
Therefore, using~\eqref{eq:L2estimate:proof:140}
in~\eqref{eq:L2estimate:proof:120} yields
\begin{align}
  -\msh(\dot{\etav},\dot{\vv}_{\hh})
  +\ash(\etav,\vvh)
  &=
  \Fsh(\vvh)-\Fs(\vvh)
  +\ms(\ddot{\uv},\vvh)
  -\msh(\Ph\ddot{\uv},\vvh)
  \nonumber\\[0.5em]
  &\phantom{=}
  +\frac{d}{\dt}\msh(\dot{\uv}-\dot{\uv}_{\hh},\vvh)
  -\msh(\ddot{\rhov},\vvh)
  -\msh(\dot{\rhov},\dot{\vv}_{\hh}).
  \label{eq:L2estimate:proof:150}
\end{align}
We rewrite the approximation error on the source term on the
right-hand side of~\eqref{eq:L2estimate:proof:150} as follows:
\begin{align}
  \Fsh(\vvh(t))-\Fs(\vvh(t)) = \ms\big( \fvh(t)-\fv(t), \vvh(t) \big),
\end{align}
where $\fvh(t)=\Piz{k-2}\fv$ according to~~\eqref{eq:Fsh:def}.
Consider the integral quantities\cite{Adak-Natarajan:2019}
\begin{align}
  \calA_1(t) = \int_{0}^{t}\big(\,\fvh(\tau)-\fv(\tau)\,\big)d\tau,\qquad
  \calA_2(t) = \int_{0}^{t}\ddot{\uv}(\tau)d\tau,\qquad
  \calA_3(t) = \int_{0}^{t}\Ph\ddot{\uv}(\tau)d\tau,
  \label{eq:L2estimate:proof:160}
\end{align}
and note that 
\begin{align}
  &\Fsh(\vvh(t))-\Fs(\vvh(t)) = \frac{d}{\dt}\ms (\calA_1,\vvh) - \ms (\calA_1,\dot{\vv}_{\hh}),   \label{eq:L2estimate:proof:170:a}\\[0.5em]
  &\ms(\ddot{\uv},\vvh)       = \frac{d}{\dt}\ms (\calA_2,\vvh) - \ms (\calA_2,\dot{\vv}_{\hh}),   \label{eq:L2estimate:proof:170:b}\\[0.5em]
  &\msh(\Ph\ddot{\uv},\vvh)   = \frac{d}{\dt}\msh(\calA_3,\vvh) - \msh(\calA_3,\dot{\vv}_{\hh}).   \label{eq:L2estimate:proof:170:c}
\end{align}
Hence,
using~\eqref{eq:L2estimate:proof:170:a},~\eqref{eq:L2estimate:proof:170:b}
and~\eqref{eq:L2estimate:proof:170:c}
in~\eqref{eq:L2estimate:proof:150} yields
\begin{align}
  &-\msh(\dot{\etav},\dot{\vv}_{\hh})
  +\ash(\etav,\vvh)
  =
  \sum_{i=1}^{2}\left[ \frac{d}{\dt}\ms (\calA_i,\vvh) - \ms (\calA_i,\dot{\vv}_{\hh}) \right]
  - \left[ \frac{d}{\dt}\msh(\calA_3,\vvh) - \msh(\calA_3,\dot{\vv}_{\hh}) \right]
  \nonumber\\[0.5em] & \qquad\qquad\qquad
  +\frac{d}{\dt}\msh(\dot{\uv}-\dot{\uv}_{\hh},\vvh)
  -\msh(\ddot{\rhov},\vvh)
  -\msh(\dot{\rhov},\dot{\vv}_{\hh}).
  \label{eq:L2estimate:proof:180}
\end{align}
To prove the assertion of the theorem we integrate both sides of
\eqref{eq:L2estimate:proof:180} with respect to $t$ between $0$ and
$\xi$; then, we estimate a lower bound for the left-hand side (LHS)
and an upper bound for the right-hand side (RHS).

\medskip
To estimate a lower bound for the left-hand side
of~\eqref{eq:L2estimate:proof:180}, we note that
$\dot{\vv}_{\hh}=-\etav$, $\vvh(\xi)=0$ and
$\ash(\vvh(0),\vvh(0))\geq0$ from the coercivity of $\ash$.
Thus, we estimate the left-hand side as follows:
\begin{align}
  &\int_{0}^{\xi}\big[\textrm{LHS of Eq.~\eqref{eq:L2estimate:proof:180}}\big]\dt
  =\int_{0}^{\xi}\Big[\msh(\dot{\etav},\etav) - \ash(\dot{\vv}_{\hh},\vvh)\Big]\dt
  \nonumber\\[0.5em]
  &\qquad\qquad= \int_{0}^{\xi}\frac{1}{2}\frac{d}{\dt}\Big[\msh(\etav,\etav) - \ash(\vvh,\vvh)\Big]\dt
  \nonumber\\[0.75em]
  &\qquad\qquad=\frac{1}{2}\Big[\msh(\etav(\xi),\etav(\xi))-\msh(\etav(0),\etav(0))\Big]
  -\frac{1}{2}\Big[\ash(\vvh (\xi),\vvh (\xi))-\ash(\vvh(0), \vvh(0) )\Big]
  \nonumber\\[0.75em]
  &\qquad\qquad\geq \frac{1}{2}\left( \msh(\etav(\xi),\etav(\xi)) - \msh(\etav(0),\etav(0)) \right).
  \label{eq:L2estimate:proof:200}
\end{align}

\medskip
To estimate an upper bound for the right-hand side
of~\eqref{eq:L2estimate:proof:180}, we note that $\calA_i(0)=0$ for
$i=1,2,3$ and use again the fact $\vvh(\xi)=0$.
So, for $i=1,2$, a direct integration yield
\begin{align}
  \int_{0}^{\xi}\Big[\,\frac{d}{\dt}\ms(\calA_{i},\vvh)-\ms(\calA_{i},\dot{\vv}_{\hh})\,\Big]\dt 
  &= 
  \ms(\calA_{i}(\xi),\vvh(\xi)) - \ms(\calA_{i}(0),\vvh(0)) 
  -\int_{0}^{\xi}\ms(\calA_{i},\dot{\vv}_{\hh})\dt 
  \nonumber\\[0.5em]&= 
  -\int_{0}^{\xi}\ms(\calA_{i},\dot{\vv}_{\hh})\dt
  \label{eq:A12:integrate}\\[-0.5em]
  \intertext{and, similarly,}
  \int_{0}^{\xi}\Big[\,\frac{d}{\dt}\msh(\calA_3,\vvh)-\msh(\calA_3,\dot{\vv}_{\hh})\,\Big]\dt 
  &= 
  \msh(\calA_3(\xi),\vvh(\xi)) - \msh(\calA_3(0),\vvh(0)) 
  -\int_{0}^{\xi}\msh(\calA_3,\dot{\vv}_{\hh})\dt
  \nonumber\\[0.5em]&= 
  -\int_{0}^{\xi}\msh(\calA_3,\dot{\vv}_{\hh})\dt.
  \label{eq:A3:integrate}
\end{align}
Using \eqref{eq:A12:integrate} and \eqref{eq:A3:integrate} in the
right-hand side of~\eqref{eq:L2estimate:proof:180} and rearranging the
terms yield
\begin{align*}
  \int_{0}^{\xi}\big[\textrm{RHS of Eq.~\eqref{eq:L2estimate:proof:180}}\big]\dt
  =& 
  -\int_{0}^{\xi}\ms(\calA_{1},\dot{\vv}_{\hh})\dt 
  + \int_{0}^{\xi} \big(\, \msh(\calA_3,\dot{\vv}_{\hh}) - \ms(\calA_{2},\dot{\vv}_{\hh}) \,\big)\dt
  \nonumber\\[0.5em]&
  +\Big[\,\msh(\dot{\uv}(\xi)-\dot{\uv}_{\hh}(\xi),\vvh(\xi)) - \msh(\dot{\uv}(0)-\dot{\uv}_{\hh}(0),\vvh(0))\,\Big]
  \nonumber\\[0.5em]&
  -\int_{0}^{\xi}\Big(\,\msh(\ddot{\rhov},\vvh)+\msh(\dot{\rhov},\dot{\vv}_{\hh})\,\Big)\dt  
  = \TERM{L}{1}(\xi) + \TERM{L}{2}(\xi) + \TERM{L}{3}(\xi) + \TERM{L}{4}(\xi).
\end{align*}
The proof continues by bounding the four terms $\TERM{L}{i}(\xi)$,
$i=,1,2,3,4$, separately.

\PGRAPH{Estimate of term $\TERM{L}{1}$}
To estimate $\TERM{L}{1}$, we first note that $\ms(\cdot,\cdot)$ is an
inner product, so we can apply the Cauchy-Schwarz and the Young
inequalities to derive the following bound, which holds for any
$t\in[0,\xi]$:
\begin{align}
  \ABS{\ms(\calA_{1}(t),\dot{\vv}_{\hh}(t))}
  \leq \NORM{\calA_1(t)}{0}\,\NORM{\dot{\vv}_{\hh}(t)}{0}
  \leq 
  \frac12\NORM{\calA_1(t)}{0}^2+\frac12\NORM{\dot{\vv}_{\hh}(t)}{0}^2.
  \label{eq:L1:bound:10}
\end{align}
We estimate term $\NORM{\calA_1(t)}{0}$ by applying Jensen's
inequality and Fubini's theorem to exchange the integration order
\begin{align}
  \NORM{\calA_1(t)}{0}^2
  &=   \int_{\Omega}\bigg\vert \int_{0}^{t} \big(\,\fv(\tau)-\Piz{k-2}(\fv(\tau)\,\big)d\tau \bigg\vert^2\!\dV
  \leq \int_{\Omega}\,T\bigg( \int_{0}^{t}\ABS{ \big(I-\Piz{k-2}\big)\fv(\tau) }^2d\tau\bigg) \dV
  \nonumber\\[0.5em]
  &=   T\int_{0}^{t}\bigg(\int_{\Omega}\ABS{ \big(I-\Piz{k-2}\big)\fv(\tau) }^2\dV\bigg)d\tau
  \leq T\int_{0}^{t}\NORM{ \big(I-\Piz{k-2}\big)\fv(\tau) }{0}^2d\tau.
  \nonumber\\[0.5em]
  &\leq T\int_{0}^{T}\NORM{ \big(I-\Piz{k-2}\big)\fv(\tau) }{0}^2d\tau.
  \label{eq:L1:bound:20}
\end{align}
We use \eqref{eq:L1:bound:20} in~\eqref{eq:L1:bound:10} and the
resulting inequality in the definition of $\TERM{L}{1}$; then, we note
that the last integral in~\eqref{eq:L1:bound:20} is on the whole
interval $[0,T]$, and is, thus, independent on $t$.
Since $\xi\leq\Ts$ and by using the Young's inequality, we readily
find that
\begin{align*}
  \ABS{\TERM{L}{1}(\xi)}
  &\leq \ABS{ \int_{0}^{\xi} \big( \ms(\calA_{1}(t),\dot{\vv}_{\hh}(t)) \big)\dt }
  \leq \int_{0}^{\xi}\ABS{\ms(\calA_{1}(t),\dot{\vv}_{\hh}(t))}\dt
  \leq 
  \frac12\int_{0}^{\xi}\NORM{\calA_1(t)}{0}^2\dt +
  \frac12\int_{0}^{\xi}\NORM{\dot{\vv}_{\hh}(t)}{0}^2\dt
  \nonumber\\[0.5em]
  &\leq
  \frac{T^2}{2}\int_{0}^{T}\NORM{ \big(1-\Piz{k-2}\big)\fv(\tau) }{0}^2d\tau +
  \frac{1}{2}\int_{0}^{\xi}\NORM{\dot{\vv}_{\hh}(t)}{0}^2\dt.
\end{align*}

\PGRAPH{Estimate of term $\TERM{L}{2}$}
Let $\ddot{\uv}_{\pi}$ be \emph{any} piecewise polynomial
approximation of degree at most $k$ of $\ddot{\uv}$ that satisfies
Lemma~\ref{lemma:hp:convergence:1}.
Now, consider the piecewise polynomial function defined on mesh $\Th$
that is given by
\begin{align*}
  (\calA_{2})_{\pi} = \int_{0}^{t}\ddot{\uv}_{\pi}(\tau)d\tau.
\end{align*}
Then, we start the estimate of the integral argument of $\TERM{L}{2}$
by using consistency property~\eqref{eq:msh:k-consistency} to add and
substract $(\calA_{2})_{\pi}$:
\begin{equation}
  \begin{array}{lll}
    &\abs{ \ms(\calA_{2},\dot{\vv}_{\hh})-\msh(\calA_{3},\dot{\vv}_{\hh}) }
    = \abs{ \ms(\calA_{2}-(\calA_{2})_{\pi},\dot{\vv}_{\hh})-\msh(\calA_{3}-(\calA_{2})_{\pi},\dot{\vv}_{\hh}) }
    &\quad\mbox{\big[add and subtract $\calA_{2}$ \big]}
    \nonumber\\[0.5em]
    &\qquad= \abs{ \ms(\calA_{2}-(\calA_{2})_{\pi},\dot{\vv}_{\hh})
      -  \msh(\calA_{3}-\calA_{2},\dot{\vv}_{\hh})
      -  \msh(\calA_{2}-(\calA_{2})_{\pi},\dot{\vv}_{\hh}) }
    &\quad\mbox{\big[use triangular inequality\big]}
    \nonumber\\[0.5em]
    &\qquad\leq 
    \abs{ \ms(\calA_{2}-(\calA_{2})_{\pi},\dot{\vv}_{\hh}) }
    +\abs{ \msh(\calA_{3}-\calA_{2},\dot{\vv}_{\hh}) }
    +\abs{ \msh(\calA_{2}-(\calA_{2})_{\pi},\dot{\vv}_{\hh}) }
    &\quad\mbox{\big[use~\eqref{eq:msh:continuity} and continuity of $\ms$\big]}
    \nonumber\\[0.5em]
    &\qquad\leq 
    \Big( 
    (1+\mu^*)\NORM{\calA_{2}-(\calA_{2})_{\pi}}{0}
    +\mu^*\NORM{\calA_{3}-\calA_{2}}{0}
    \Big)\,\NORM{\dot{\vv}_{\hh}}{0}
    &\quad\mbox{\big[note that $\mu^*<1+\mu^*$\big]}
    \nonumber\\[0.5em]
    &\qquad\leq 
    (1+\mu^*) \Big( \NORM{\calA_{2}-(\calA_{2})_{\pi}}{0}+\NORM{\calA_{3}-\calA_{2}}{0} \Big)\,\NORM{\dot{\vv}_{\hh}}{0}
    &\quad\mbox{\big[use Young's inequality\big]}
    \nonumber\\[0.5em]
    &\qquad\leq 
    \frac{(1+\mu^*)}{2} \Big( \NORM{\calA_{2}-(\calA_{2})_{\pi}}{0}+\NORM{\calA_{3}-\calA_{2}}{0} \Big)^2 
    + \frac{(1+\mu^*)}{2}\NORM{\dot{\vv}_{\hh}}{0}^2
    &\quad\mbox{\big[use $(a+b)^2\leq 2a^2+2b^2$\big]} 
    \nonumber\\[0.5em]
    &\qquad\leq 
    (1+\mu^*)\Big( \NORM{\calA_{2}-(\calA_{2})_{\pi}}{0}^2+\NORM{\calA_{3}-\calA_{2}}{0}^2 \Big)
    + \frac{(1+\mu^*)}{2}\NORM{\dot{\vv}_{\hh}}{0}^2.
  \end{array}
\end{equation}
To estimate $\NORM{\calA_{2}(t)-(\calA_{2}(t))_{\pi}}{0}^2$, we start
from the definition of $\calA_{2}(t)$ and $(\calA_{2}(t))_{\pi}$:
\begin{align}
  \begin{array}{lll}
    &\NORM{\calA_{2}(t)-(\calA_{2}(t))_{\pi}}{0}^2
    =\NORM{ \int_{0}^{t} \Big(\ddot{\uv}(\tau)-\ddot{\uv}_{\pi}(\tau)\Big)d\tau }{0}^2
    &\quad\mbox{\big[use definition of $\LTWO(\Omega)$-norm\big]}
    \nonumber\\[0.75em]
    &\qquad=\int_{\Omega}\ABS{ \int_{0}^{t} \Big(\ddot{\uv}(\tau)-\ddot{\uv}_{\pi}(\tau)\Big)d\tau }^2\dV
    &\quad\mbox{\big[use Jensen's inequality\big]}
    \nonumber\\[1.00em]
    &\qquad\leq \int_{\Omega} \bigg( T\int_{0}^{t} \ABS{ \ddot{\uv}(\tau)-\ddot{\uv}_{\pi}(\tau) }^2d\tau \bigg)\dV
    &\quad\mbox{\big[apply Fubini's Theorem\big]}
    \nonumber\\[1.00em]
    &\qquad\leq T\int_{0}^{t} \bigg(  \int_{\Omega}\ABS{ \ddot{\uv}(\tau)-\ddot{\uv}_{\pi}(\tau) }^2\dV \bigg)d\tau
    &\quad\mbox{\big[use definition of $\LTWO(\Omega)$-norm\big]}
    \nonumber\\[1.00em]
    &\qquad=T\int_{0}^{t} \NORM{ \ddot{\uv}(\tau)-\ddot{\uv}_{\pi}(\tau) }{0}^2d\tau
    &\quad\mbox{\big[apply Lemma~\ref{lemma:hp:convergence:1} \big]}
    \nonumber\\[1.00em]
    &\qquad\lesssim \Ts\frac{\hh^{2(\mu+1)}}{k^{2(m+1)}}\int_{0}^{t} \SNORM{\ddot{\uv}(\tau)}{m+1}^2d\tau
    &\quad\mbox{\big[use definition of $\LTWO(0,T;[\HS{m+1}(\Omega)]^2)$-norm\big]}
    \nonumber\\[1.00em]
    &\qquad\lesssim \Ts\frac{\hh^{2(\mu+1)}}{k^{2(m+1)}}\NORM{ \ddot{\uv} }{\LTWO(0,T; [\HS{m+1}(\Omega)]^2 )}^2.
  \end{array}
\end{align}
Similarly, to estimate $\NORM{\calA_{3}(t)-\calA_{2}(t)}{0}^2$, we
start from the definition of $\calA_{3}(t)$ and $\calA_{2}(t)$:
\begin{align}
  \begin{array}{lll}
    &\NORM{\calA_{3}(t)-\calA_{2}(t)}{0}^2
    =\NORM{ \int_{0}^{t} \Big(\Ph\ddot{\uv}(\tau)-\ddot{\uv}(\tau)\Big)d\tau }{0}^2
    &\quad\mbox{\big[use definition of $\LTWO(\Omega)$-norm\big]}
    \nonumber\\[1.00em]
    &\qquad=\int_{\Omega} \ABS{ \int_{0}^{t} \Big(\Ph\ddot{\uv}(\tau)-\ddot{\uv}(\tau)\Big)d\tau }^2\dV
    &\quad\mbox{\big[use Jensen's inequality\big]}
    \nonumber\\[1.00em]
    &\qquad\leq \int_{\Omega} \bigg( T\int_{0}^{t} \ABS{ \Ph\ddot{\uv}(\tau)-\ddot{\uv}(\tau) }^2d\tau \bigg)\dV
    &\quad\mbox{\big[apply Fubini's Theorem\big]}
    \nonumber\\[1.00em]
    &\qquad\leq T\int_{0}^{t} \bigg(  \int_{\Omega}\ABS{ \Ph\ddot{\uv}(\tau)-\ddot{\uv}(\tau) }^2\dV \bigg)d\tau
    &\quad\mbox{\big[use definition of $\LTWO(\Omega)$-norm\big]}
    \nonumber\\[1.00em]
    &\qquad= T\int_{0}^{t} \NORM{ \Ph\ddot{\uv}(\tau)-\ddot{\uv}(\tau) }{0}^2d\tau
    &\quad\mbox{\big[apply Lemma~\ref{lemma:Ph:error:estimate} \big]}
    \nonumber\\[1.00em]
    &\qquad\lesssim \Ts\frac{\hh^{2(\mu+1)}}{\RED{k^{2(m+1)}}}\int_{0}^{t} \SNORM{\ddot{\uv}(\tau)}{m+1}^2d\tau
    &\quad\mbox{\big[use definition of $\LTWO(0,T;[\HS{m+1}(\Omega)]^2)$-norm\big]}
    \nonumber\\[1.00em]
    &\qquad\lesssim \Ts\frac{\hh^{2(\mu+1)}}{\RED{k^{2(m+1)}}}\NORM{ \ddot{\uv} }{\LTWO(0,T; [\HS{m+1}(\Omega)]^2 )}^2.
  \end{array}
\end{align}
Using the previous estimates, and noting that the integration on the
time interval $[0,\xi]$, $\xi\leq\Ts$, produces an additional factor
$\Ts$, we obtain the final upper bound for term $\TERM{L}{2}$:
\begin{align*}
  \ABS{\TERM{L}{2}(\xi)} 
  \leq \int_{0}^{\xi} \ABS{ \msh(\calA_3,\dot{\vv}_{\hh}) - \ms(\calA_{2},\dot{\vv}_{\hh}) }\dt
  \lesssim 
  \big(1+\mu^*\big)\Ts^2\frac{\hh^{2(\mu+1)}}{\RED{k^{2(m+1)}}}\NORM{ \ddot{\uv} }{\LTWO(0,T; (\HS{m+1}(\Omega))^2 )}^2 +
  \frac{1+\mu^*}{2}\int_{0}^{\xi} \NORM{\dot{\vv}_{\hh}}{0}^2\dt.
\end{align*}

\PGRAPH{Estimate of term $\TERM{L}{3}$}
Since $\vvh(\xi)=0$ by definition, term $\TERM{L}{3}$ only depends on
the approximation error on $\dot{\uv}(0)=\uv_1$, i.e., the initial
condition for $\dot{\uv}$.
Using~\eqref{eq:msh:continuity} and Young's inequality yield:
\begin{align*}
  \ABS{\TERM{L}{3}(\xi)}
  &= \ABS{\msh(\dot{\uv}(0)-\dot{\uv}_{\hh}(0),\vvh(0))}
  \leq \mu^*\NORM{\dot{\uv}(0)-\dot{\uv}_{\hh}(0)}{0} \,\NORM{\vvh(0)}{0}
  \nonumber\\[0.5em]
  &\leq 
  \frac{(\mu^*)^2}{2}\NORM{\dot{\uv}(0)-\dot{\uv}_{\hh}(0)}{0}^2+
  \frac{(\mu^*)^2}{2}\NORM{\vvh(0)}{0}^2.
\end{align*}

\PGRAPH{Estimate of term $\TERM{L}{4}$}
We start from the definition of term $\TERM{L}{4}$ and apply the
triangular inequality to find that:
\begin{align}
  \ABS{\TERM{L}{4}(\xi)}
  = \ABS{\int_{0}^{\xi}\Big(\,\msh(\ddot{\rhov},\vvh)+\msh(\dot{\rhov},\dot{\vv}_{\hh})\,\Big)\dt}
  \leq 
  \int_{0}^{\xi}\ABS{\msh(\ddot{\rhov},\vvh)}\dt +
  \int_{0}^{\xi}\ABS{\msh(\dot{\rhov},\dot{\vv}_{\hh})}\dt.
  \label{eq:bound:L4:10}
\end{align}
Both terms on the right depend on the exact solution $\uv$ and its
approximation provided by the energy projection $\Ph\uv$.
We bound the first term by starting from~\eqref{eq:msh:continuity}:
\begin{align*}
  \begin{array}{lll}
    &\int_{0}^{\xi}\ABS{\msh(\ddot{\rhov},\vvh)}\dt
    \leq \mu^*\int_{0}^{\xi}\NORM{\ddot{\rhov}}{0}\,\NORM{\vvh}{0}\dt
    &\quad\mbox{\big[use Young's inequality\big]}
    \nonumber\\[0.5em]
    &\qquad\leq 
    \frac{\mu^*}{2}\int_{0}^{\xi}\NORM{\ddot{\rhov}}{0}^2\dt +
    \frac{\mu^*}{2}\int_{0}^{\xi}\NORM{\vvh}{0}^2\dt
    &\quad\mbox{\big[apply Lemma~\ref{lemma:Ph:error:estimate} to $\ddot{\rhov}$\big]}
    \nonumber\\[0.5em]
    &\qquad\lesssim
    \frac{\mu^*}{2} \frac{\hh^{2(\mu+1)}}{\RED{k^{2(m+1)}}} \int_{0}^{\xi}\SNORM{\ddot{\uv}}{m+1}^2\dt +
    \frac{\mu^*}{2}\int_{0}^{\xi}\NORM{\vvh}{0}^2\dt
    &\quad\mbox{\big[use definition of $\LTWO(0,T;[\HS{m+1}(\Omega)]^2)$-norm\big]}
    \nonumber\\[0.5em]
    &\qquad\lesssim
    \frac{\mu^*}{2} \frac{\hh^{2(\mu+1)}}{\RED{k^{2(m+1)}}} \NORM{\ddot{\uv}}{\LTWO(0,T;[\HS{m+1}(\Omega)]^2}^2 +
    \frac{\mu^*}{2}\int_{0}^{\xi}\NORM{\vvh}{0}^2\dt.
  \end{array}
\end{align*}
Similarly, we bound the second term by starting
from~\eqref{eq:msh:continuity}:
\begin{align*}
  \begin{array}{lll}
    &\int_{0}^{\xi}\ABS{\msh(\dot{\rhov},\dot{\vv}_{\hh})}\dt
    \leq \mu^*\int_{0}^{\xi}\NORM{\dot{\rhov}}{0}\,\NORM{\dot{\vv}_{\hh}}{0}\dt
    &\quad\mbox{\big[use Young's inequality\big]}
    \nonumber\\[0.5em]
    &\qquad\leq 
    \frac{\mu^*}{2}\int_{0}^{\xi}\NORM{\dot{\rhov}}{0}^2\dt +
    \frac{\mu^*}{2}\int_{0}^{\xi}\NORM{\dot{\vv}_{\hh}}{0}^2\dt
    &\quad\mbox{\big[apply Lemma~\eqref{lemma:Ph:error:estimate} to $\dot{\rhov}$\big]}
    \nonumber\\[0.5em]
    &\qquad\lesssim
    \frac{\mu^*}{2} \frac{\hh^{2(\mu+1)}}{\RED{k^{2(m+1)}}} \int_{0}^{\xi}\SNORM{\dot{\uv}}{m+1}^2\dt +
    \frac{\mu^*}{2}\int_{0}^{\xi}\NORM{\dot{\vv}_{\hh}}{0}^2\dt
    &\quad\mbox{\big[use definition of $\LTWO(0,T;[\HS{m+1}(\Omega)]^2)$-norm\big]}
    \nonumber\\[0.5em]
    &\qquad\lesssim
    \frac{\mu^*}{2} \frac{\hh^{2(\mu+1)}}{\RED{k^{2(m+1)}}} \NORM{\dot{\uv}}{\LTWO(0,T;[\HS{m+1}(\Omega)]^2}^2 +
    \frac{\mu^*}{2}\int_{0}^{\xi}\NORM{\dot{\vv}_{\hh}}{0}^2\dt.
  \end{array}
\end{align*}
Using these two estimates in~\eqref{eq:bound:L4:10}, we immediately
find that
\begin{align}
  \ABS{\TERM{L}{4}(\xi)}\lesssim
  \frac{\mu^*}{2} \frac{\hh^{2(\mu+1)}}{\RED{k^{2(m+1)}}} \Big(
  \NORM{\ddot{\uv}}{\LTWO(0,T;[\HS{m+1}(\Omega)]^2}^2 +
  \NORM{\dot {\uv}}{\LTWO(0,T;[\HS{m+1}(\Omega)]^2}^2 
  \Big)+
  \mu^*\int_{0}^{\xi}\NORM{\dot{\vv}_{\hh}}{0}^2\dt.
  \label{eq:bound:L4:20}
\end{align}

\PGRAPH{Estimate of $\NORM{\etav}{0}$ and conclusion of the proof}
On collecting the upper bounds of $\TERM{L}{i}(\xi)$, $i=1,2,3,4$,
absorbing the factors $\mu^*$ in the hidden constants and using the
lower bound~\eqref{eq:L2estimate:proof:200}, we have that
\begin{align}
  \mu_*\NORM{\etav(\xi)}{0}^2 
  =&\,     \mu_*\ms(\etav(\xi),\etav(\xi))
  \leq     \msh(\etav(\xi),\etav(\xi))
  \nonumber\\[0.5em]
  \lesssim
  &\,
  \msh(\etav(0),  \etav(0)) 
  + \NORM{\dot{\uv}(0)-\dot{\uv}_{\hh}(0)}{0}^2
  + \Ts^2\int_{0}^{T}\NORM{ \big(I-\Piz{k-2}\big)\fv(\tau) }{0}^2d\tau
  \nonumber\\[0.5em]
  &
  + \frac{\hh^{2(\mu+1)}}{\RED{k^{2(m+1)}}} \big( 
  \NORM{\dot{\uv}}{\LTWO(0,T;[\HS{m+1}(\Omega)]^2)}^2
  + \Ts^2\NORM{ \ddot{\uv} }{\LTWO(0,T; [\HS{m+1}(\Omega)]^2 )}^2
  \big)
  \nonumber\\[0.5em]
  &
  + \int_{0}^{\xi}\NORM{\dot{\vv}_{\hh}}{0}^2\dt
  + \int_{0}^{\xi}\NORM{\vvh}{0}^2\dt.
  \label{eq:etav:bound:10}
\end{align}
We estimate the first term in the right-hand side
of~\eqref{eq:etav:bound:10} by using
property~\eqref{eq:msh:k-consistency}, the definition of $\etav(0)$,
adding and subtracting $\uv(0)$
and estimate~\eqref{eq:Ph:L2:estimate}:
\begin{align}
  \msh(\etav(0),\etav(0))
  &\leq  \mu^*\NORM{\etav(0)}{0}^2
  =     \mu^*\NORM{\uvh(0)-\Ph\uv(0)}{0}^2
  \nonumber\\[0.5em]
  &\leq 2\mu^*\NORM{\uvh(0)-\uv(0)}{0}^2 + 2\mu^*\NORM{\uv(0)-\Ph\uv(0)}{0}^2
  \nonumber\\[0.5em]
  &\lesssim \NORM{\uvh(0)-\uv_0}{0}^2    + \frac{\hh^{2(\mu+1)}}{\RED{k^{2(m+1)}}}\SNORM{\uv_0}{m+1}^2.
  \label{eq:etav:bound:20}
\end{align}
We also estimate the last integral term in~\eqref{eq:etav:bound:10} by
using the definition of the $\LTWO(\Omega)$-norm,
definition~\eqref{eq:L2estimate:proof:130}, Jensen's inequality, and
Fubini's Theorem to change the integration order:
\begin{align}
  \NORM{\vvh(t)}{0}^2 
  &=    \int_{\Omega}\bigg| \int_{t}^{\xi}\etav(\tau)d\tau \bigg|^2\dV
  \leq \int_{\Omega}\ABS{\xi-t}\bigg(\int_{t}^{\xi}\ABS{\etav(\tau)}^2d\tau\bigg)\dV
  \nonumber\\[0.5em]
  &=    \ABS{\xi-t}\int_{t}^{\xi}\bigg(\int_{\Omega}\ABS{\etav(\tau)}^2\dV\bigg)d\tau
  \leq T\int_{0}^{\xi}\NORM{\etav(\tau)}{0}^2\,d\tau
  \label{eq:etav:bound:30}
  \intertext{and}
  \int_{0}^{\xi}\NORM{\vvh(t)}{0}^2\dt 
  &= 
  \bigg(T\int_{0}^{\xi}\NORM{\etav(\tau)}{0}^2d\tau\bigg)\,
  \bigg(\int_{0}^{\xi}\dt\bigg)
  \leq T^2\int_{0}^{\xi}\NORM{\etav(\tau)}{0}^2\,d\tau
    \label{eq:etav:bound:35}
\end{align}
Using estimates~\eqref{eq:etav:bound:20} and~\eqref{eq:etav:bound:35}
in~\eqref{eq:etav:bound:10} and recalling that
$\dot{\vv}_{\hh}=-\etav$, we find that
\begin{align}
  \NORM{\etav(\xi)}{0}^2 
  \lesssim
  &\,
  \NORM{\uv(0)-\uvh(0)}{0}^2
  + \NORM{\dot{\uv}(0)-\dot{\uv}_{\hh}(0)}{0}^2
  + \Ts^2\int_{0}^{T}\NORM{ \big(I-\Piz{k-2}\big)\fv(\tau) }{0}^2d\tau
  \nonumber\\[0.5em]
  &
  + \frac{\hh^{2(\mu+1)}}{\RED{k^{2(m+1)}}} \big( 
  \SNORM{\uv_0}{m+1}^2 
  + \NORM{\dot{\uv}}{\LTWO(0,T;[\HS{m+1}(\Omega)]^2}^2
  + \Ts^2\NORM{ \ddot{\uv} }{\LTWO(0,T; [\HS{m+1}(\Omega)]^2 )}^2
  \big)
  \nonumber\\[0.5em]
  &+ 
  (1+T^2)\int_{0}^{\xi}\NORM{\etav(t)}{0}^2\dt.
  \label{eq:etav:bound:40}
\end{align}
An application of Gronwall's inequality yields, for (almost) every
$\ts\in[0,\Ts]$, the desired upper bound on $\etav(t)$:
\begin{align}
  \NORM{\etav(t)}{0} 
  \lesssim
  \,\Cs(\Ts)\bigg(&
  \NORM{\uv(0)-\uvh(0)}{0}
  + \NORM{\dot{\uv}(0)-\dot{\uv}_{\hh}(0)}{0}
  + \int_{0}^{T}\NORM{ \big(1-\Piz{k-2}\big)\fv(\tau) }{0}d\tau
  \nonumber\\[0.5em]
  &
  +\frac{\hh^{\mu+1}}{\RED{k^{m+1}}}\big( 
  \SNORM{\uv_0}{s+1,\Omega} 
  + \NORM{\dot{\uv}}{\LTWO(0,T;[\HS{m+1}(\Omega)]^2}
  + \NORM{ \ddot{\uv} }{\LTWO(0,T;[ \HS{m+1}(\Omega])^2 )}
  \big)
  \bigg),
  \label{eq:etav:bound:50}
\end{align}
where $\Cs(\Ts)$ depends on $\Ts$, the final observation time, and 
the constant hidden in the ``$\lesssim$'' notation depends on $\mu^*$ 
and the mesh regularity constant $\varrho$, but is independent of $\hh$.
Finally, we prove the assertion of the theorem through the triangular
inequality $\NORM{\uvh-\uv}{0}\leq\NORM{\rhov}{0}+\NORM{\etav}{0}$
and, then, on using~\eqref{eq:Ph:H1:estimate} to bound $\rhov$
and~\eqref{eq:etav:bound:50} to bound $\etav$.
\ENDPROOF

\section{Numerical investigations}
\label{sec5:numerical}

In this section, we carry out number of numerical investigation to
validate the VEM by documenting its optimal accuracy and its
dissipation/dispersion properties in solving wave propagation
problems.
To this end, in section~\ref{subsec51:manuf_soln} we show the optimal
convergence properties of the VEM by using a manufactured solution on
three different mesh families, each one possessing some special
feature.
In section~\ref{subsec52:diss-disp}, we carry out a \textit{Von Neumann}
analysis by studying the dispersion and dissipation of an elementary
wave on a periodic domain tessellated in different ways.

\RED{
\begin{remark}
  All our numerical results are obtained by using the stabilization
  introduced in Section~\ref{subsec:vem:implementation}.
  This stabilization is independent of the polynomial degree $k$ and
  our numerical results are in good accordance with the expected
  results.
  However, it is known that practical tests in situations like the
  scalar Poisson problem show that for moderate-to-high values of $k$
  this might not be a good choice (especially in three-dimensions).
  An alternative choice that is surely worth mentioning is provided by
  the so called ``D-recipe''
  stabilization.~\cite{Beirao-Chernov-Mascotto-Russo:2018,Mascotto:2018,Dassi-Mascotto:2018}
\end{remark}
}

\begin{figure}
  \centering
  \begin{tabular}{ccc}
    \begin{overpic}[scale=0.2]{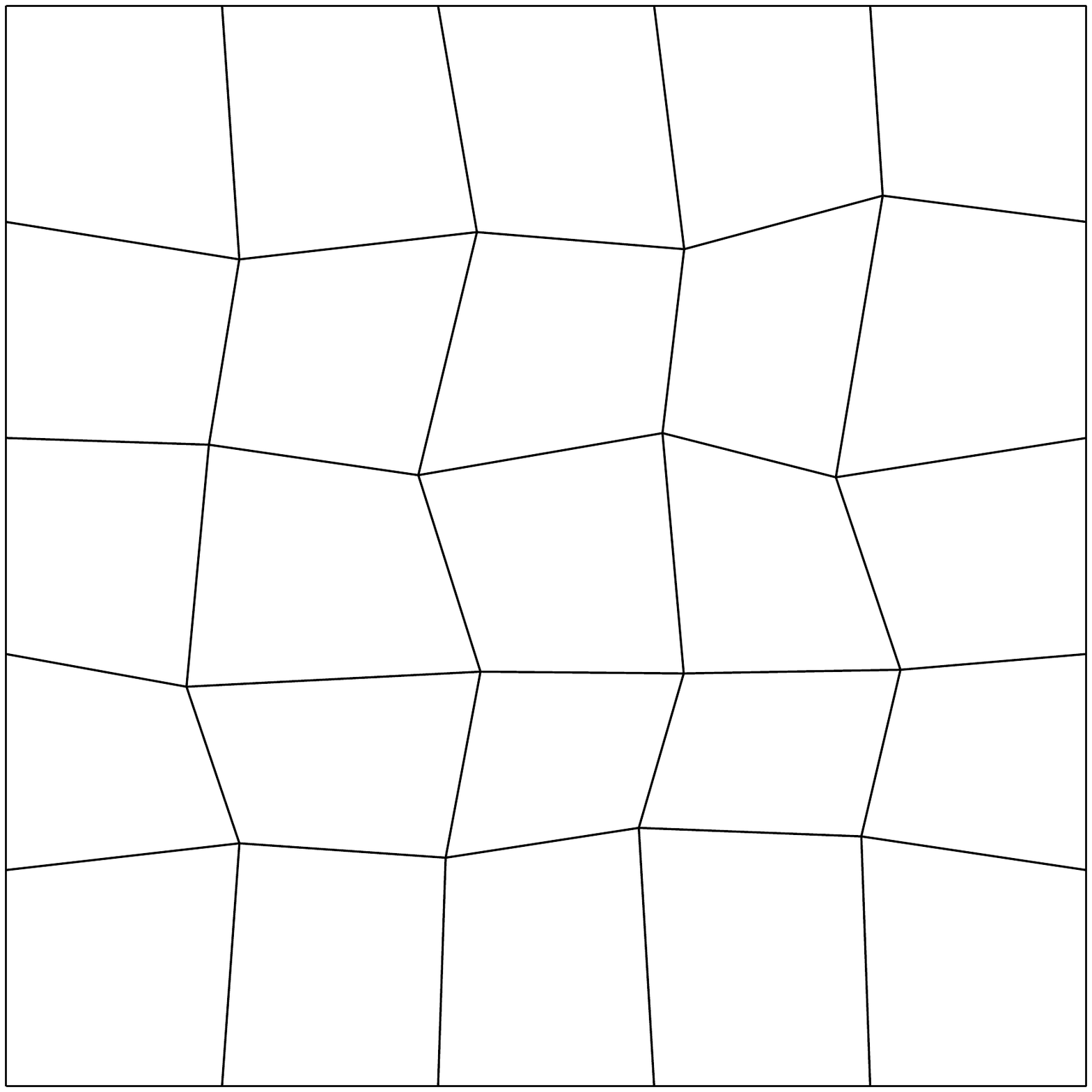}
    \end{overpic} 
    &
    \begin{overpic}[scale=0.2]{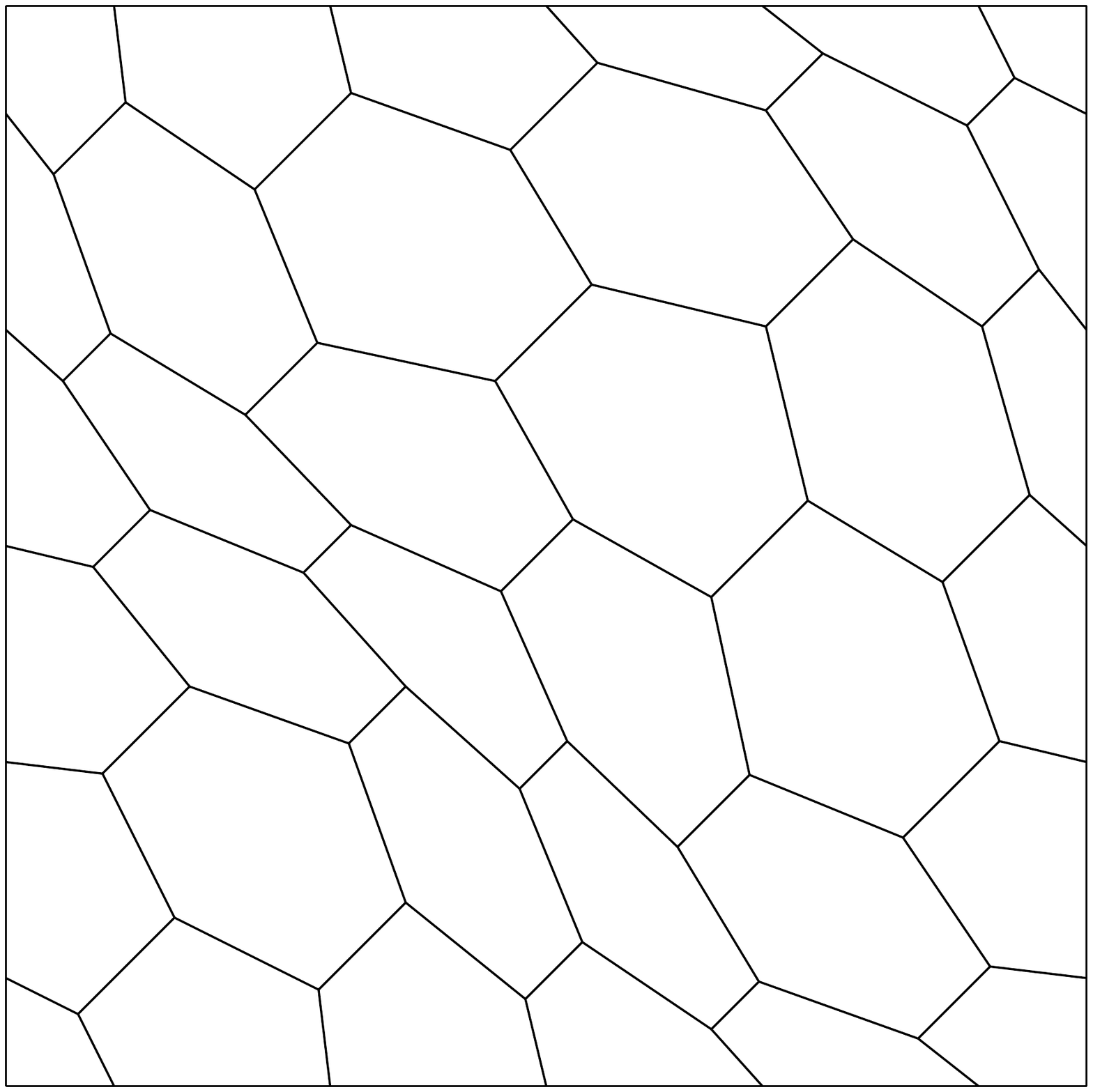}
    \end{overpic} 
    &
    \begin{overpic}[scale=0.2]{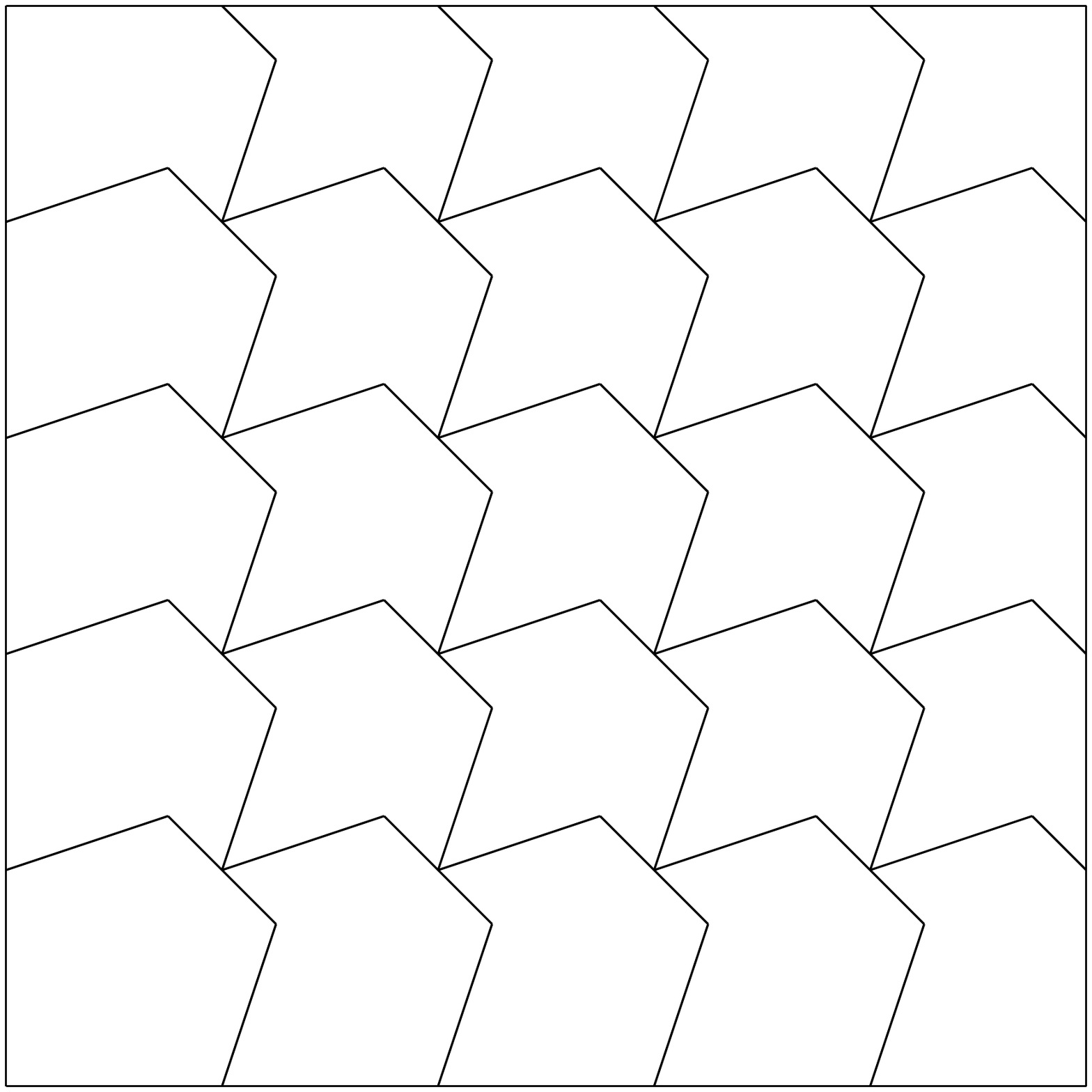}
    \end{overpic}
    \\[0.25em]
    \begin{overpic}[scale=0.2]{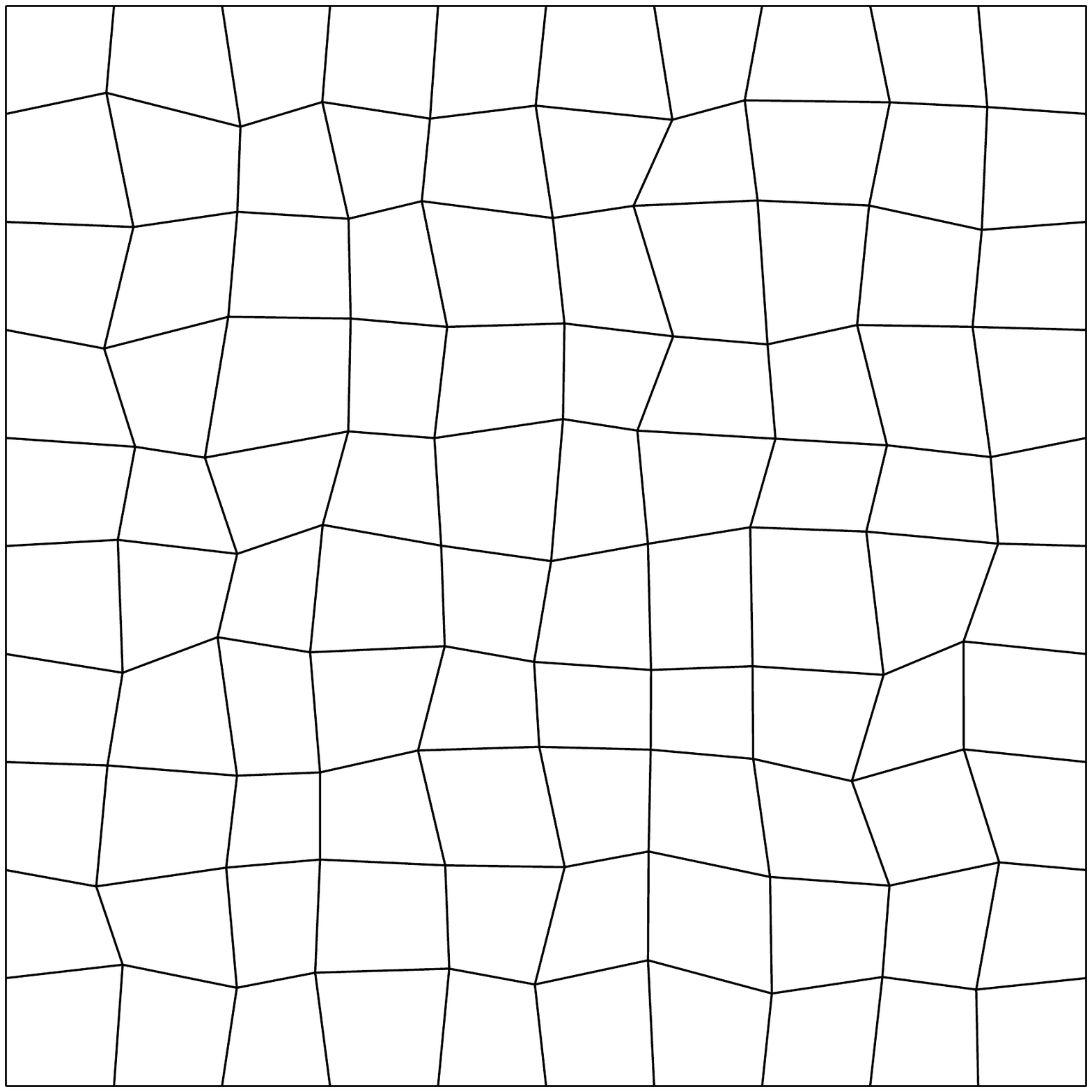}
    \end{overpic} 
    &
    \begin{overpic}[scale=0.2]{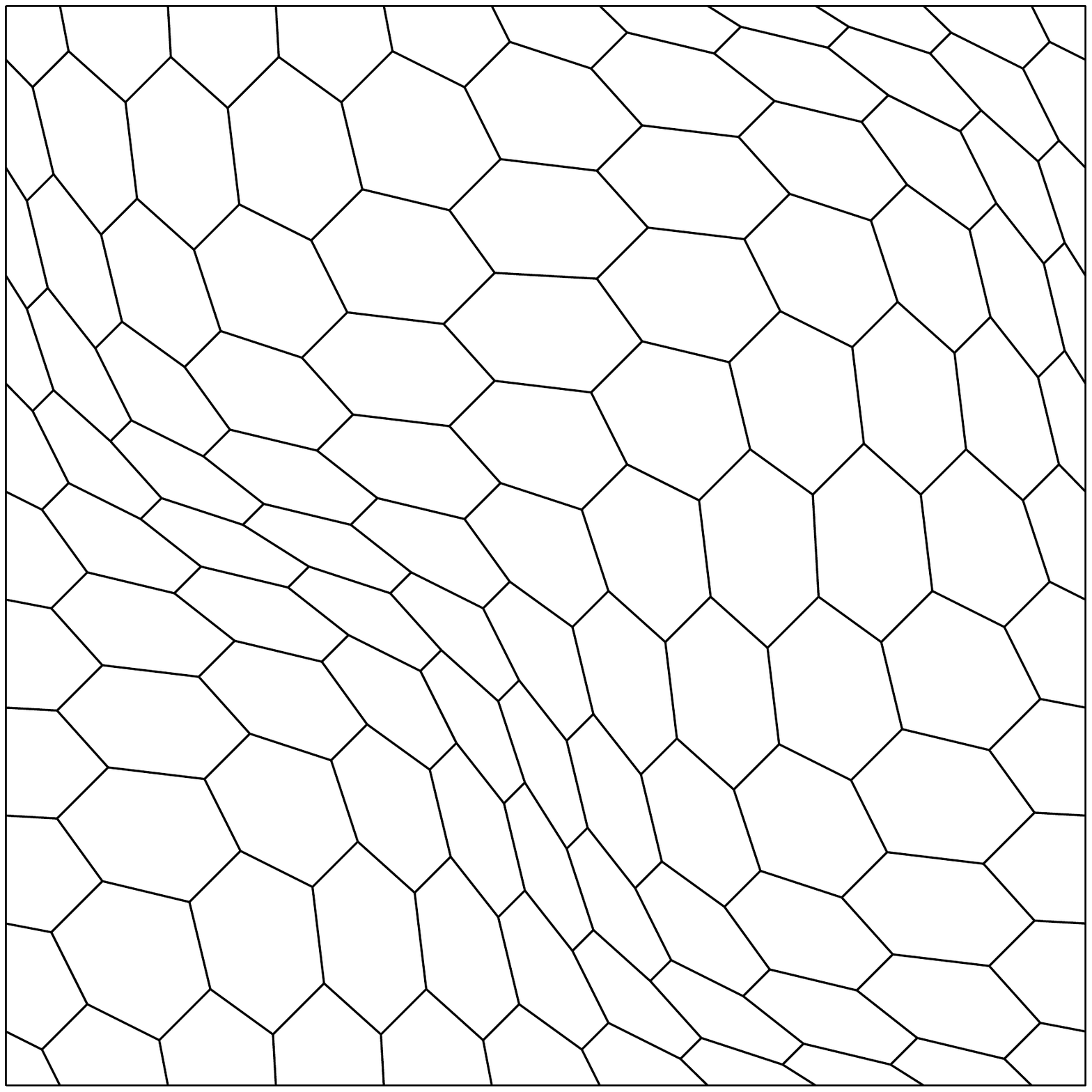}
    \end{overpic} 
    &
    \begin{overpic}[scale=0.2]{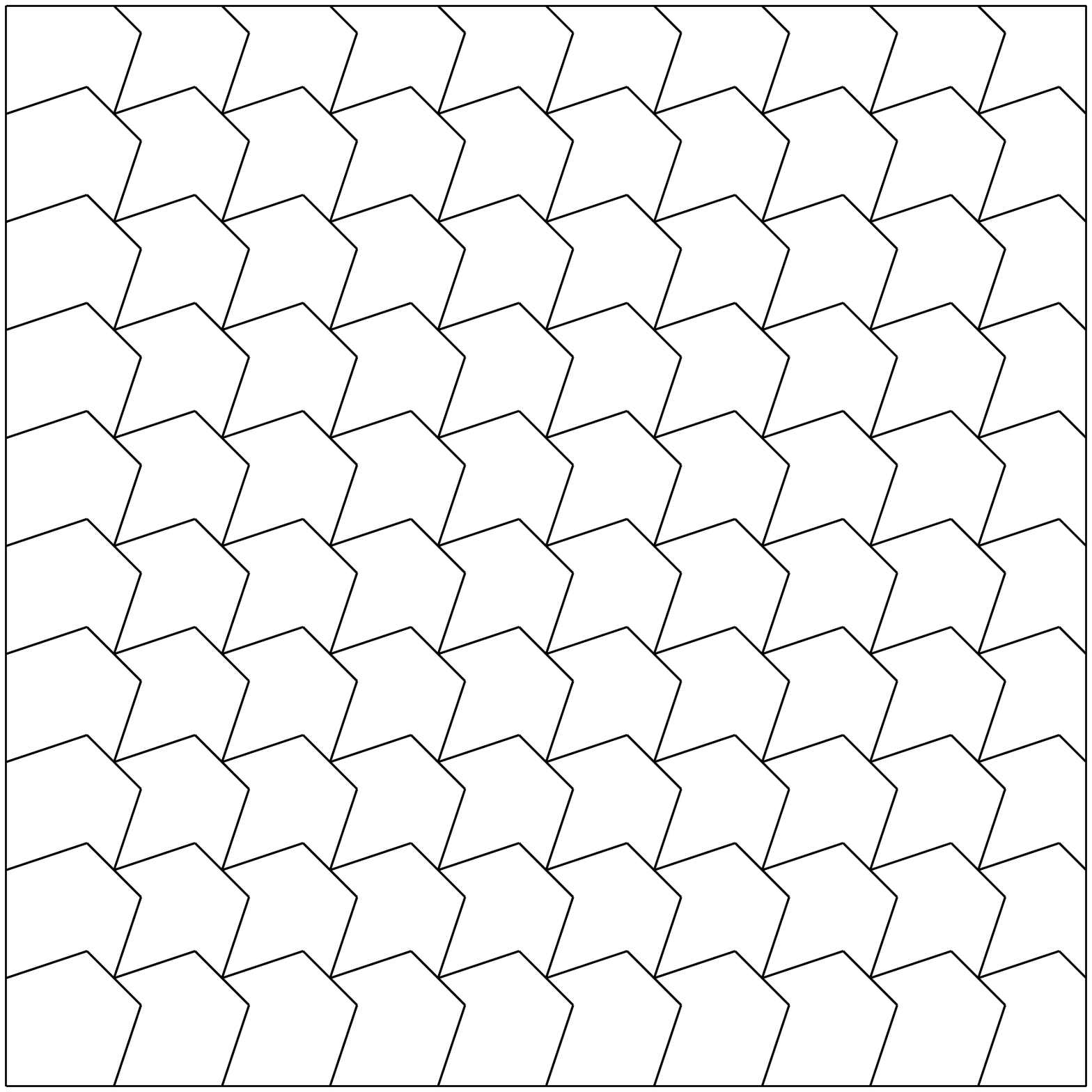}
    \end{overpic}
    \\[-0.25em]
    \textit{Mesh~1} & \textit{Mesh~2} & \textit{Mesh~3}
  \end{tabular}
  \caption{Base meshes (top row) and first refined meshes (bottom row)
    of the following mesh families from left to right: randomized
    quadrilateral mesh; mainly hexagonal mesh; nonconvex octagonal
    mesh.}
  \label{fig:Meshes}
\end{figure}

\begin{figure}
  \centering
  \begin{tabular}{cc}
    \begin{overpic}[scale=0.325]{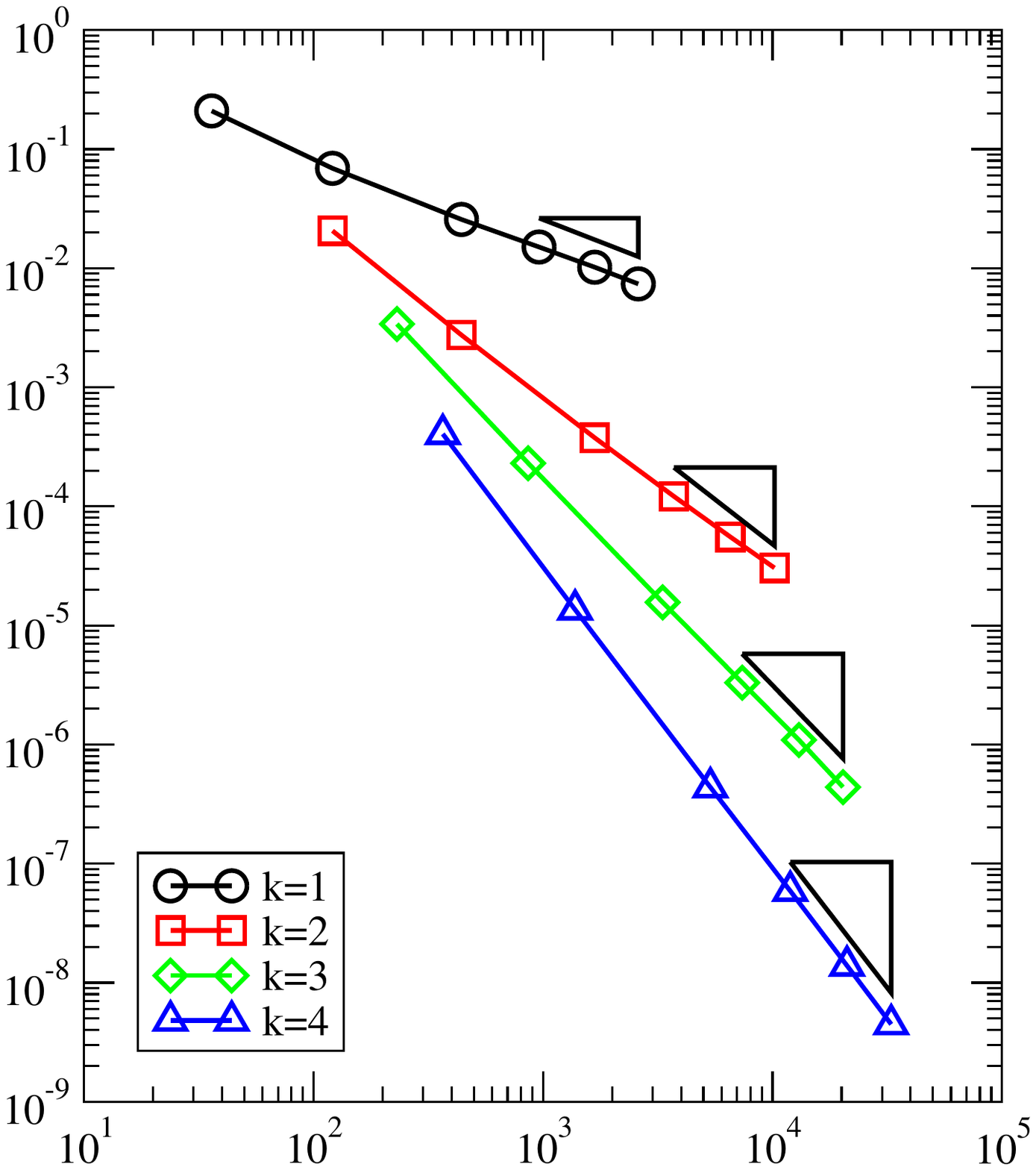}
      \put(-5,9){\begin{sideways}\textbf{$\mathbf{L^2}$ relative approximation error}\end{sideways}}
      \put(20,-2) {\textbf{\#degrees of freedom}}
      \put(50,   82.0){\textbf{1}}
      \put(57.5,62.0){\textbf{1.5}}
      \put(67,   46.5){\textbf{2}}
      \put(68,   30.0){\textbf{2.5}}
      \end{overpic} 
    &\qquad
    \begin{overpic}[scale=0.325]{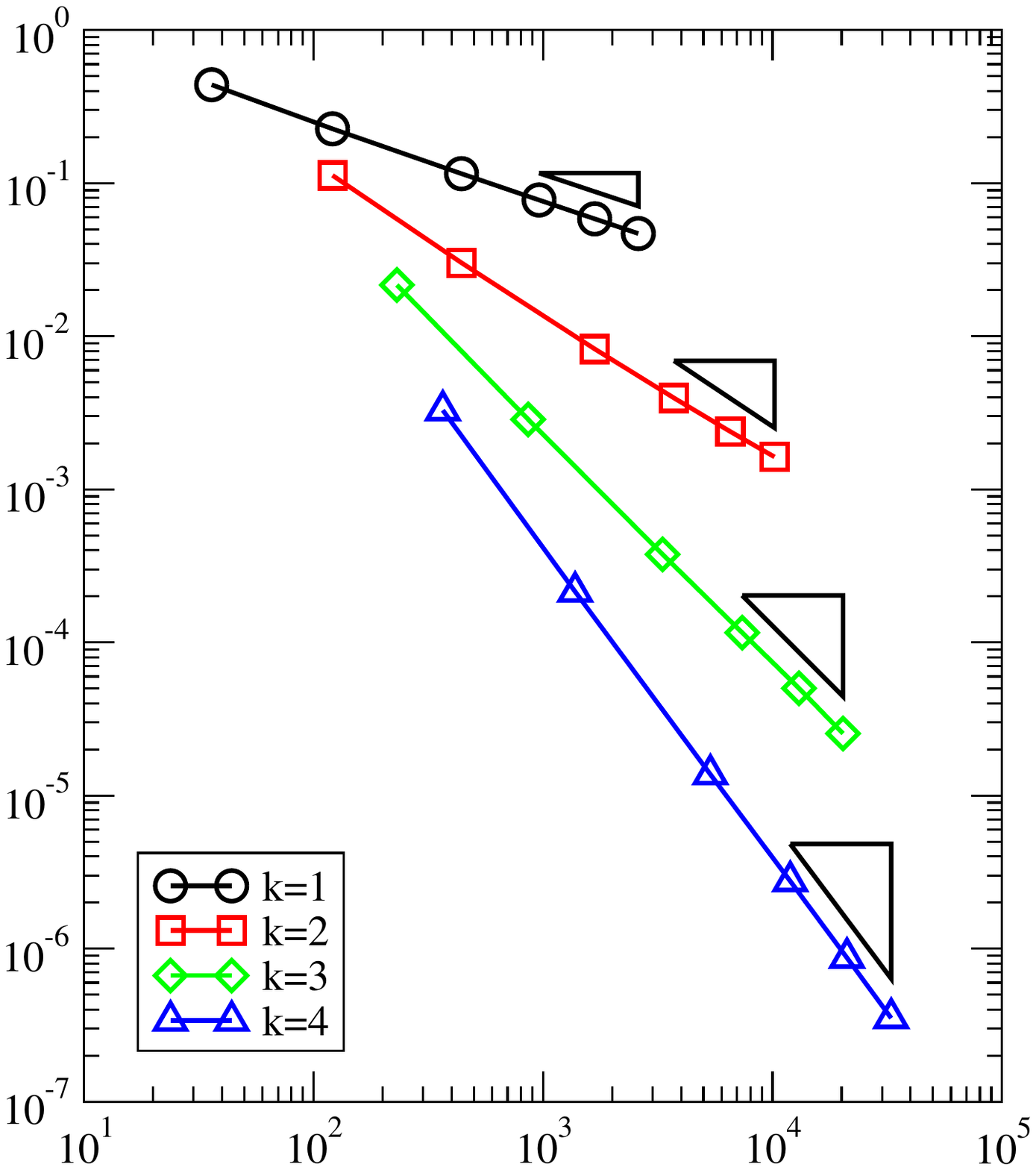}
      \put(-5,9){\begin{sideways}\textbf{$\mathbf{H^1}$ relative approximation error}\end{sideways}}
      \put(20,-2) {\textbf{\#degrees of freedom}}
      \put(47,85.0){\textbf{0.5}}
      \put(61,70.0){\textbf{1}}
      \put(64,51.0){\textbf{1.5}}
      \put(70,31.0){\textbf{2}}      
    \end{overpic}
    \\[0.5em]
  \end{tabular}
  \caption{Convergence plots for the virtual element approximation of
    Problem~\eqref{eq:pblm:strong:A}-\eqref{eq:pblm:strong:E} with
    exact solution~\eqref{eq:benchmark:solution} using
    family~\textit{Mesh~1} of randomized quadrilateral meshes. 
    Error curves are computed using the $\LTWO$ norm (left panels) and
    $\HONE$ norm (right panels) and are plot versus the 
    number of degrees of freedom.
  }
  \label{fig:quads:rates}
\end{figure}

\begin{figure}
  \centering
  \begin{tabular}{cc}
    \begin{overpic}[scale=0.325]{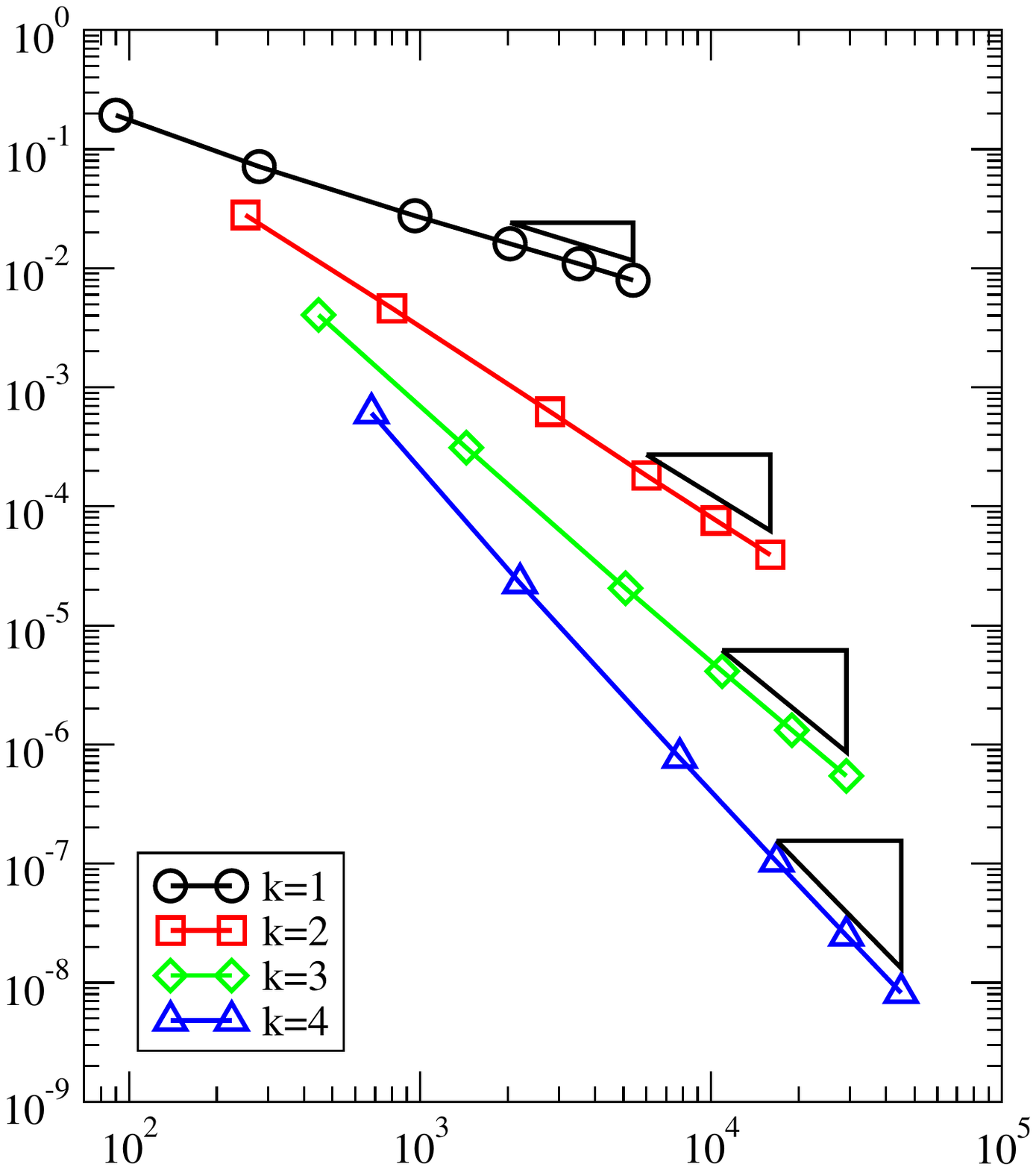}
      \put(-5,9){\begin{sideways}\textbf{$\mathbf{L^2}$ relative approximation error}\end{sideways}}
      \put(20,-2) {\textbf{\#degrees of freedom}}
      \put(48,81.5){\textbf{1}}
      \put(57,63){\textbf{1.5}}
      \put(66,46.5){\textbf{2}}
      \put(66.5,31){\textbf{2.5}}
    \end{overpic} 
    & \qquad
    \begin{overpic}[scale=0.325]{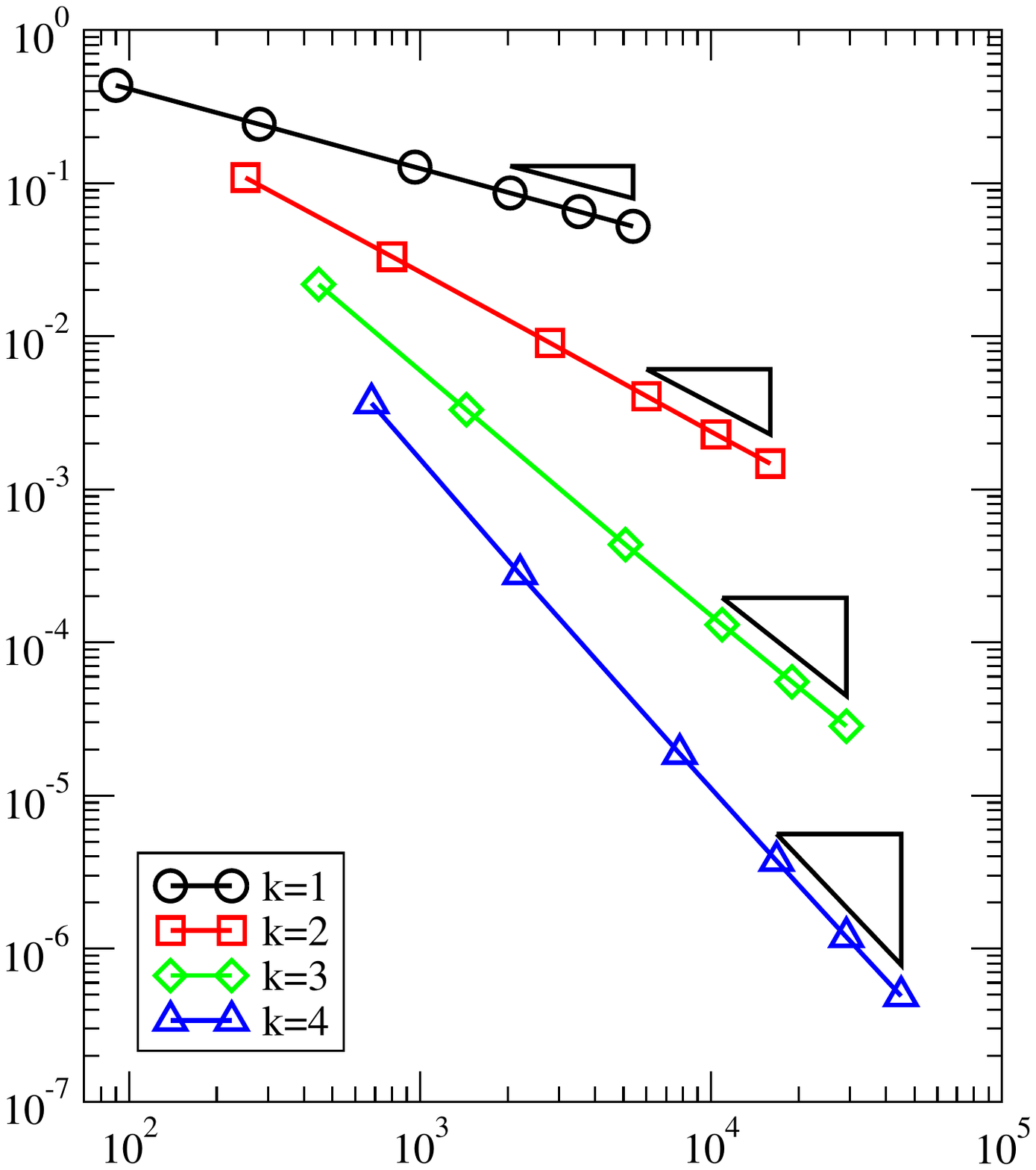}
      \put(-5,9){\begin{sideways}\textbf{$\mathbf{H^1}$ elative approximation error}\end{sideways}}
      \put(20,-2) {\textbf{\#degrees of freedom}}
     \put(47,85.5){\textbf{0.5}}
      \put(60,70.0){\textbf{1}}
      \put(63,51.0){\textbf{1.5}}
      \put(70,32.0){\textbf{2}}     
    \end{overpic}
    \\[0.5em]
  \end{tabular}
  \caption{Convergence plots for the virtual element
    approximation of
    Problem~\eqref{eq:pblm:strong:A}-\eqref{eq:pblm:strong:E} with
    exact solution~\eqref{eq:benchmark:solution} using
    family~\textit{Mesh~2} of mainly hexagonal meshes. 
    Error curves are computed using the $\LTWO$ norm (left panels) and
    $\HONE$ norm (right panels) and are plot versus
    the number of degrees of freedom.
  }
  \label{fig:hexa:rates}
\end{figure}

\begin{figure}
  \centering
  \begin{tabular}{cc}
    \begin{overpic}[scale=0.325]{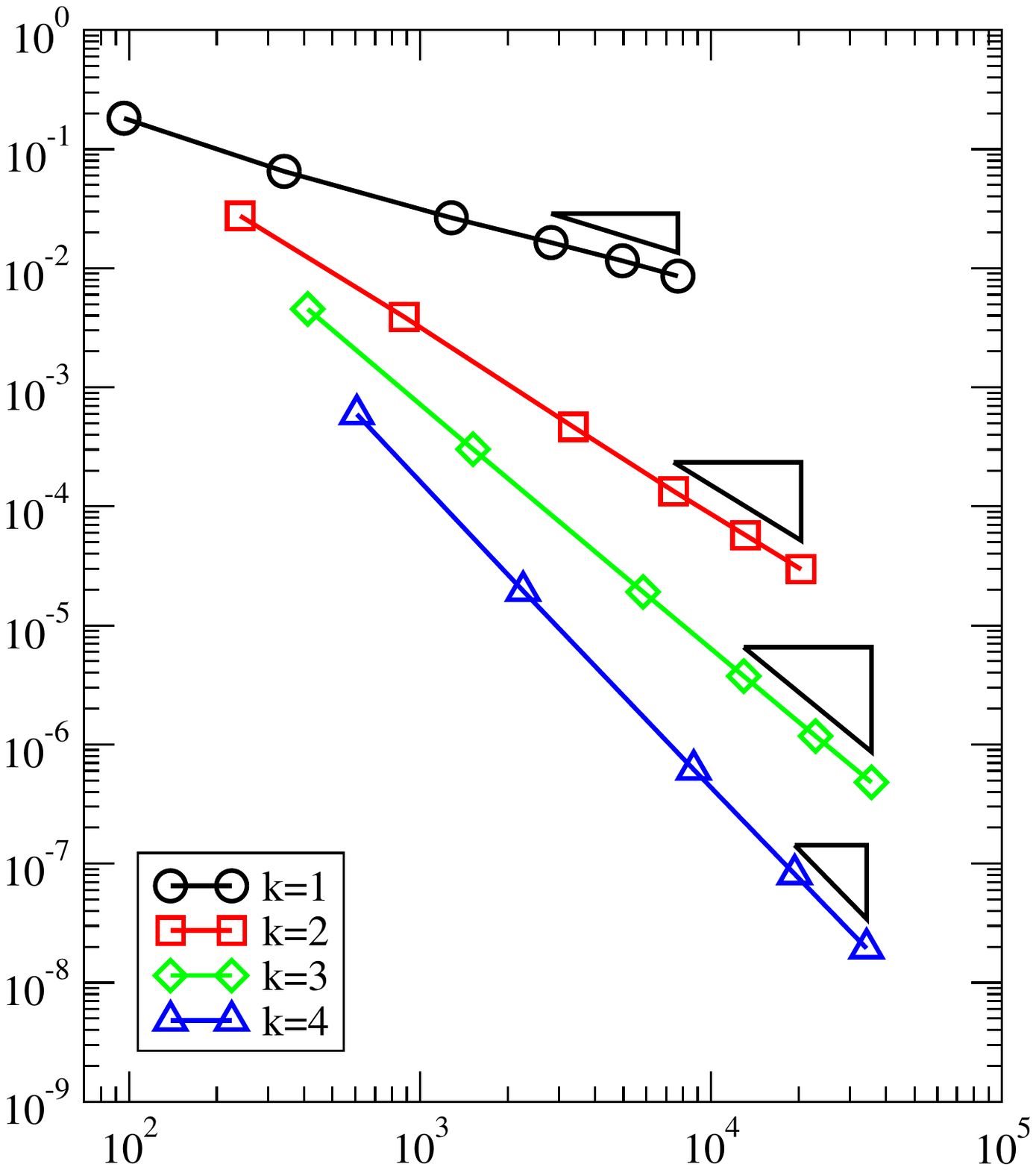}
      \put(-5,9){\begin{sideways}\textbf{$\mathbf{L^2}$ relative approximation error}\end{sideways}}
      \put(20,-2) {\textbf{\#degrees of freedom}}
      \put(52,82){\textbf{1}}
      \put(59,62){\textbf{1.5}}
      \put(67,46.5){\textbf{2}}
      \put(66.5,31){\textbf{2.5}}
    \end{overpic}
    & \qquad
    \begin{overpic}[scale=0.325]{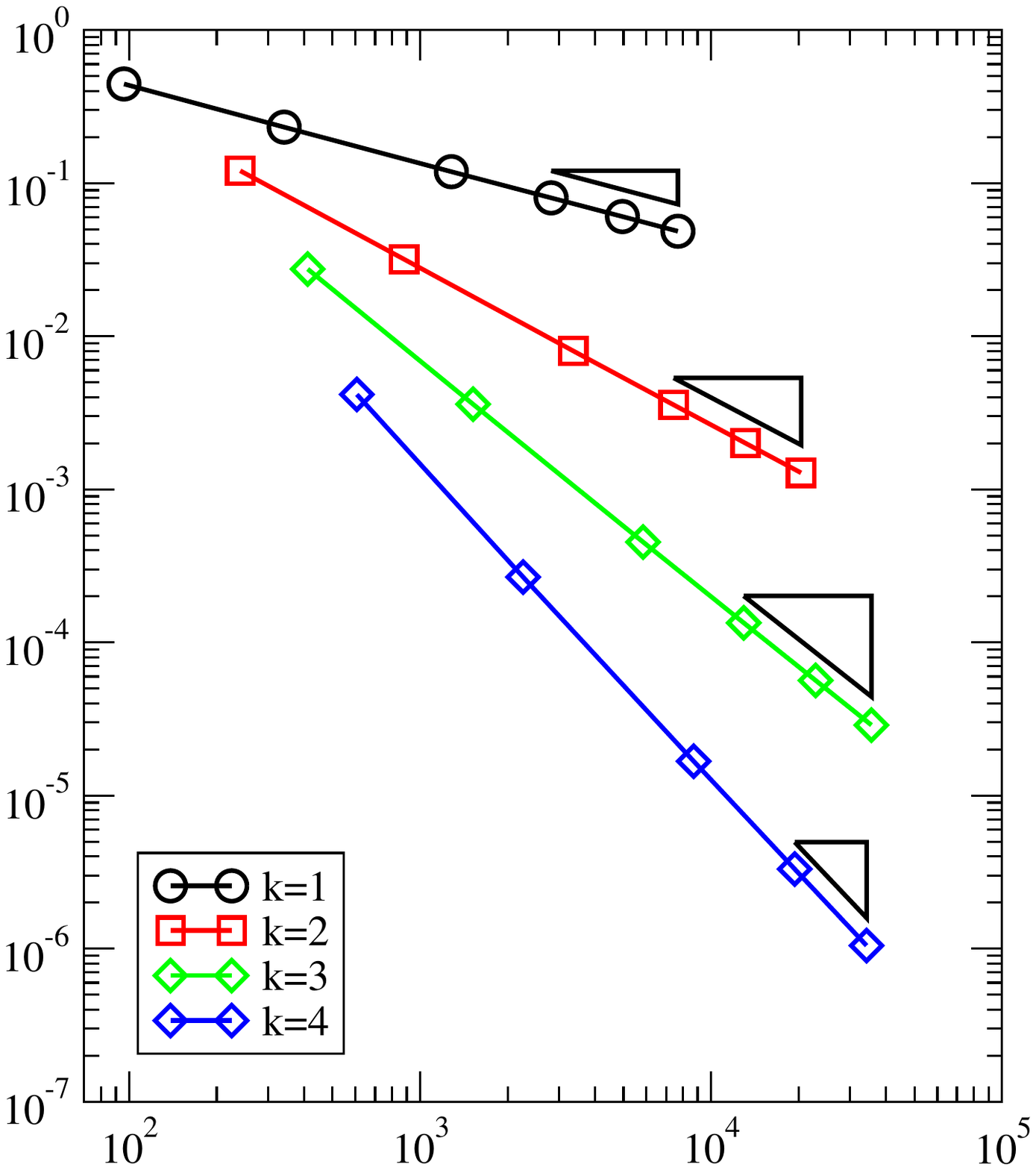}
      \put(-5,9){\begin{sideways}\textbf{$\mathbf{H^1}$ relative approximation error}\end{sideways}}
      \put(20,-2) {\textbf{\#degrees of freedom}}
      \put(49,85.5){\textbf{0.5}}
      \put(62,68.5){\textbf{1}}
      \put(65,51){\textbf{1.5}}
      \put(69,31){\textbf{2}}   
    \end{overpic}
    \\[0.5em]
  \end{tabular}
  \caption{Convergence plots for the virtual element approximation of
    Problem~\eqref{eq:pblm:strong:A}-\eqref{eq:pblm:strong:E} with
    exact solution~\eqref{eq:benchmark:solution} using
    family~\textit{Mesh~3} of nonconvex octagonal meshes.
    Error curves are computed using the $\LTWO$ norm (left panels) and
    $\HONE$ norm (right panels) and are plot versus
    the number of degrees of freedom.
  }  
  \label{fig:octa:rates}
\end{figure}

\begin{figure}
  \centering
  \begin{tabular}{cc}
    \begin{overpic}[scale=0.325]{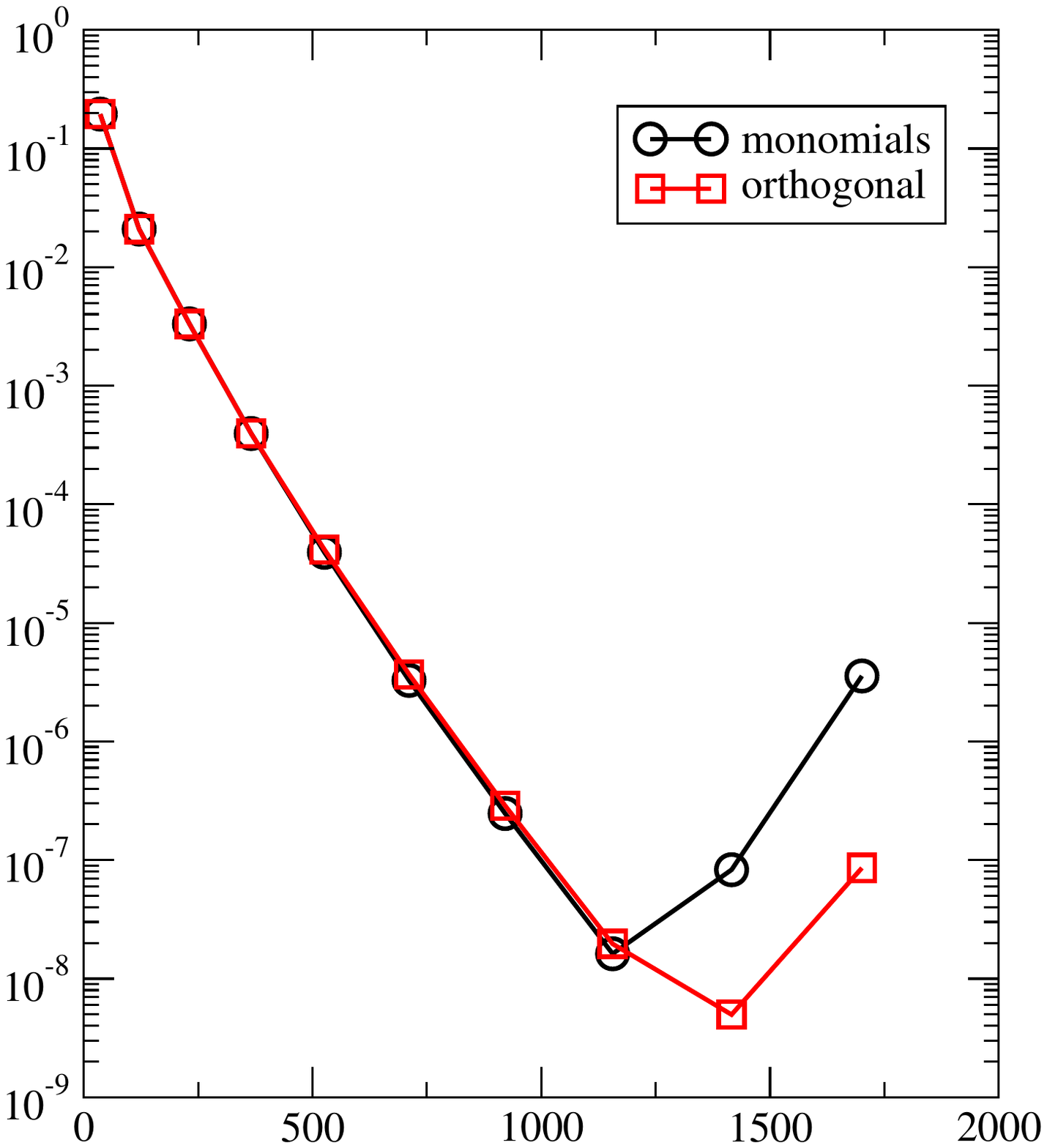}
      \put(-5,11){\begin{sideways}\textbf{$\mathbf{L^2}$ relative approximation error}\end{sideways}}
      \put(20,-2) {\textbf{\#degrees of freedom}}
    \end{overpic} 
    & \qquad
    \begin{overpic}[scale=0.325]{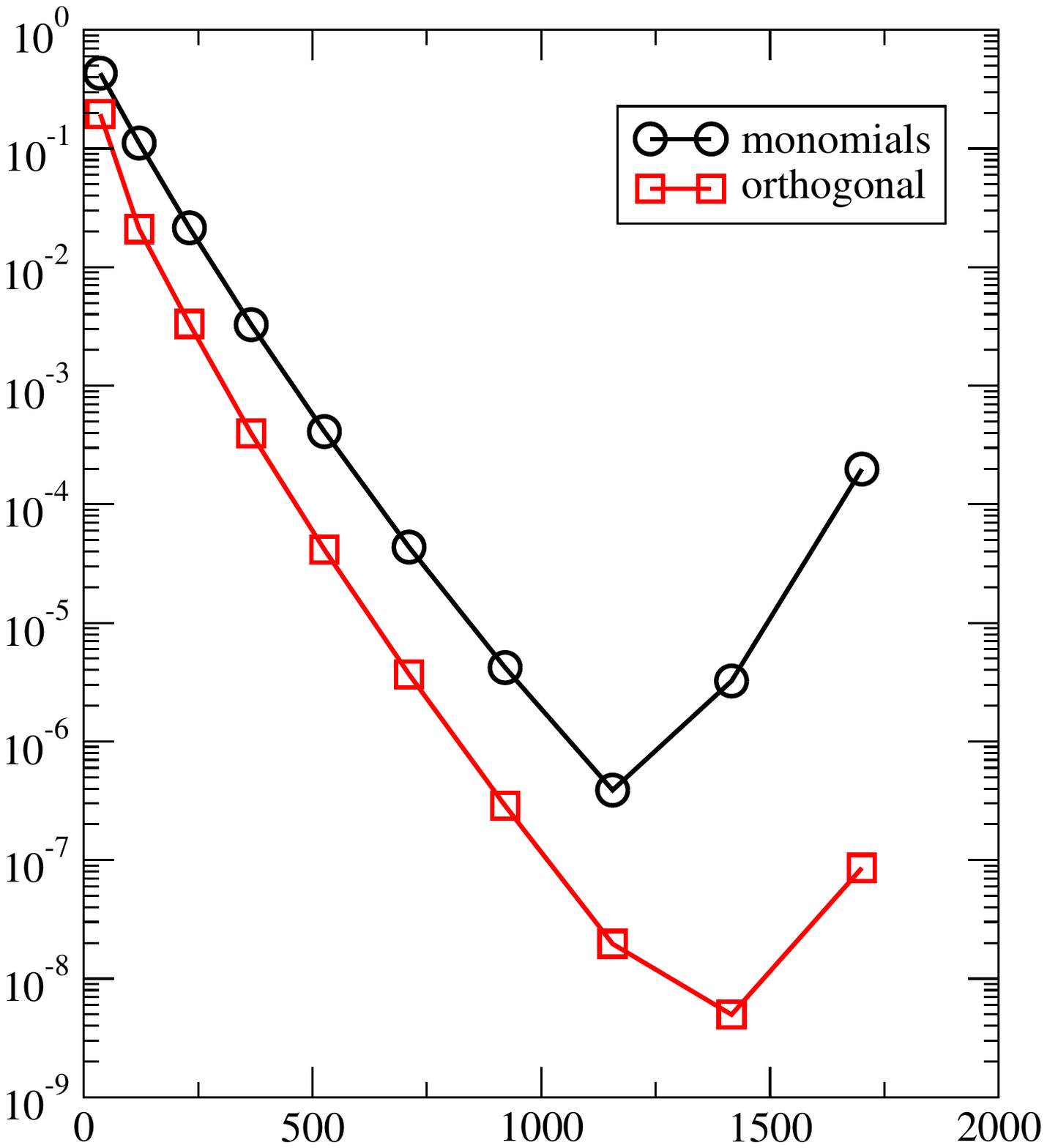}
      \put(-5,11){\begin{sideways}\textbf{$\mathbf{H^1}$ relative approximation error}\end{sideways}}
      \put(20,-2) {\textbf{\#degrees of freedom}}
    \end{overpic}
    \\[0.5em]
  \end{tabular}
  \caption{Convergence plots for the virtual element approximation of
    Problem~\eqref{eq:pblm:strong:A}-\eqref{eq:pblm:strong:E} with
    exact solution~\eqref{eq:benchmark:solution} using
    family~\textit{Mesh~1} of randomized quadrilateral meshes.
    Error curves are computed using k-refinement the $\LTWO$ norm
    (left panel) and $\HONE$ norm (right panel) and are plot versus
    the number of degrees of freedom by performing a refinement of
    type ``p'' on a $5\times 5$ mesh.
    Each plot shows the two convergence curves that are obtained using
    monomials (circles) and orthogonalized polynomials (squares.)}
  \label{fig:expo:rate}
\end{figure}

\subsection{Accuracy assessment using a manufactured solution}
\label{subsec51:manuf_soln}

In this section, we aim to confirm the optimal convergence rate of the
numerical approximation of the elastodynamic
problem~\eqref{eq:pblm:strong:A}-\eqref{eq:pblm:strong:E} provided by
the virtual element method in accordance with
Theorems~\ref{theorem:semi-discrete:convergence}
and~\ref{theorem:L2:convergence}.
In particular, we let $\Omega=(0,1)^2$ for $t\in[0,T]$, $T=1$, and
consider initial condition $\uv_{0}$, boundary condition $\gv$ and
forcing term $\fv$ determined from the exact solution:
\begin{align}
  \uv(x,y,t) = 
  \cos\left(\frac{2\pi\,t}{T}\right)
  \left(
    \begin{array}{c}
      \sin^2(\pi\xs)\sin(2\pi\ys)\\[0.5em]
      \sin(2\pi\xs)\sin^2(\pi\ys)
    \end{array}
  \right).
  \label{eq:benchmark:solution}
\end{align}
To this end, we consider three different mesh partitionings, denoted
by:
\begin{itemize}
\item \textit{Mesh~1}, randomized quadrilateral mesh;
\item \textit{Mesh~2}, mainly hexagonal mesh with continuously
  distorted cells;
\item \textit{Mesh~3}, nonconvex octagonal mesh.
\end{itemize}
The base mesh and the first refined mesh of each mesh sequence are
shown in Figure~\ref{fig:Meshes}.

These mesh sequences have been widely used in the mimetic finite
difference and virtual element
literature.\cite{BeiraodaVeiga-Lipnikov-Manzini:2011}
The discretization in time is given by applying the leap-frog method
with $\Delta t=10^{-4}$ and carried out for $10^{4}$ time cycles in
order to reach time $T=1$.

For these calculations, we used the VEM approximation based on the
conforming space $\Vhk$ with \RED{$k=1,2,3,4$} and the convergence
curves for the three mesh sequences above are reported in
Figures~\ref{fig:quads:rates},~\ref{fig:hexa:rates}
and~\ref{fig:octa:rates}.
The expected rate of convergence is shown in each panel by the
triangle closed to the error curve and indicated by an explicit label.
According to 
Theorem~\ref{theorem:semi-discrete:convergence}, we expect that the
approximation error decreases as $\mathcal{O}\big(\hh^{k}\big)$,
\RED{or $\mathcal{O}\big(\Ndofs^{-k/2}\big)$ in terms of the number of degrees
  of freedom that scale like $\Ndofs\approx\hh^{-2}$}
when using the virtual element method of order $k$ and measuring the
error in the $\HONE$ norm.
Consistently with our approximation, we also expect to see the
approximation error to decrease as $\mathcal{O}\big(\hh^{k+1}\big)$,
 \RED{or $\mathcal{O}\big(\Ndofs^{-(k+1)/2}\big)$, when using the
  $\LTWO$ norm.}
  
Furthermore, Figure~\ref{fig:expo:rate} shows the semilog error curves
obtained through a``p''-type refinement calculation for the previous
benchmark, i.e.\ for a fixed $5\times 5$ mesh of type $I$ the order of
the virtual element space is increased from $k=1$ to $k=10$.
We refer the reader to
Reference\cite{BeiraodaVeiga-Chernov-Mascotto-Russo:2016} for a
detailed presentation of the $p/hp$ virtual element formulations.
Here, we compare the performance of the method for two different
implementations.
In the first one, the space of polynomials of degree $k$ is generated
by the standard scaled monomials, while in the second one we consider
an orthogonal polynomial basis.
The impact that such a basis can have on the accuracy of the
high-order VEM has already been shown in the VEM
literature.\cite{Berrone-Borio:2017,Mascotto:2018,Dassi-Mascotto:2018}
The behavior of the VEM when using nonorthogonal and orthogonal
polynomials basis shown in Figure~\ref{fig:expo:rate} is in accordance
with the literature.

These plots confirm that the conforming VEM formulations proposed in
this work provide a numerical approximation with optimal convergence
rate on a set of representative mesh sequences, including deformed and
nonconvex polygonal cells.
\subsection{Dispersion and dissipation analysis}
\label{subsec52:diss-disp}
In this section we want to study the dispersion and dissipation errors
for the VEM presented before, by means of the Von Neumann analysis,
i.e., the propagation of a plane wave in a homogeneous medium.
For a plane wave, we recall that the \emph{numerical dispersion} is
the gap between the phase of the numerical solution and that of the
exact solution while \emph{numerical dissipation} represents a
decrease in the amplitude.

In order to carry out the dispersion and dissipation analysis, we make
some standard hypothesis for the following plane wave
analysis.~\cite{Cohen:2002,Ainsworth:2004a,Ainsworth:2004b}
Indeed, we suppose that the medium is isotropic, homogeneous,
unbounded and source-free.
The latter assumptions are, in general, not respected when considering
realistic geophysical applications. However, this analysis provides
important information about the discretization parameters to be used
in practice for the numerical simulation.

Consider equation \eqref{eq:pblm:strong:A} for an unbounded elastic
medium where no external forces are applied and seek for solutions in
the following form\cite{Eringen-Suhubi:1975}
\begin{equation}
  \uv(\xx,t) = \av e^{i(\kv\cdot\xv - \omega\ts)},
  \label{eq:pwave}		
\end{equation}
where $\av=[\as_1,\as_2]^T$ is the wave amplitude, $\omega$ the
angular frequency and $\kv=2\pi\slash{L}(\cos\theta,\sin\theta)$ the
wavenumber vector, with $L$ being the wavelength and $\theta$ the
angle between the wave propagation direction and the coordinate axes.
Then, by taking the real part of \eqref{eq:pwave} the physical wave is
recovered.
Under these conditions the semi discrete problem
\eqref{eq:VEM:semi-discrete} becomes
\begin{equation}
  \label{eq:semi-discAlgebric}
  \MM\ddot{\UU} + \KK\UU = \mathbf{0},
\end{equation}
where $\MM$ and $\KK$ are defined as in
Section~\ref{subsec:vem:implementation}.
To comply with the conditions of periodicity and unboundedness, we
solve problem \eqref{eq:semi-discAlgebric} over the domain $E_{ref}$
with uniform size $\hh$, and impose periodic boundary conditions on
its boundary, see Figure \ref{fig:periodic}.
The periodic reference domain $E_{ref}$ can be $(a)$ a square; $(b)$
the union of two triangles; $(c)$ the union of a set of polygons,
namely, four hexagons, two quadrilaterals and two pentagons; $(d)$ the
union of one square and four pentagons; $(e)$ four octagons; $(f)$ one
hexagon and four triangles.
\begin{figure}[t]
  \centering
  \begin{tabular}{ccc}
    \includegraphics[width=0.32\textwidth]{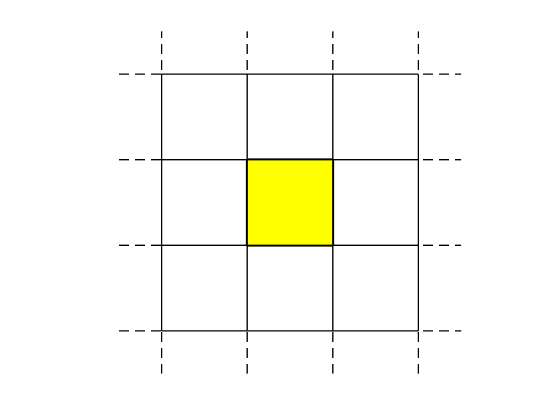} & 
    \includegraphics[width=0.32\textwidth]{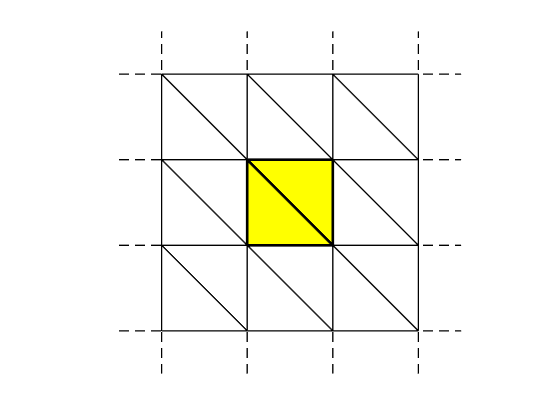} & 
    \includegraphics[width=0.32\textwidth]{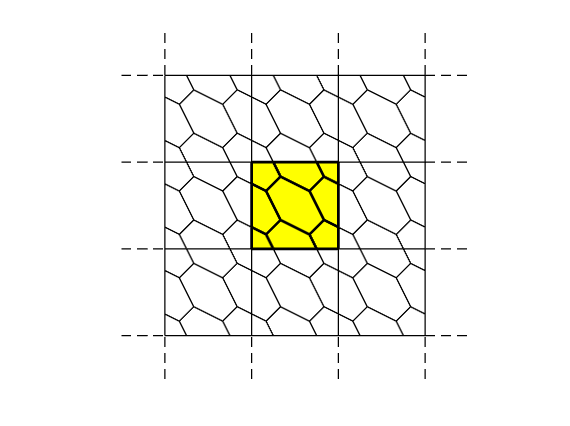} \\[-1em]
    \textit{Quadrilateral grid} & 
    \textit{Triangular grid}    &
    \textit{C1 grid}            \\[1em]
    \includegraphics[width=0.32\textwidth]{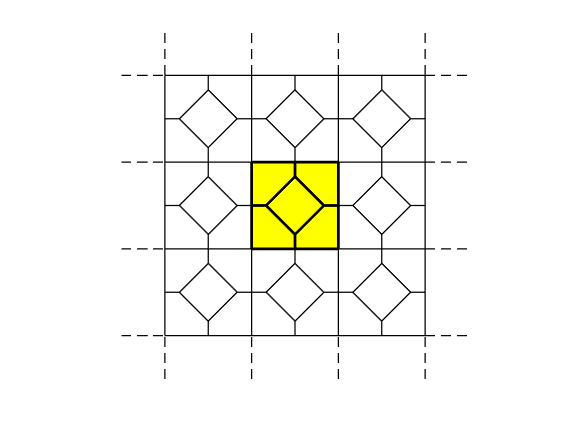} &
    \includegraphics[width=0.32\textwidth]{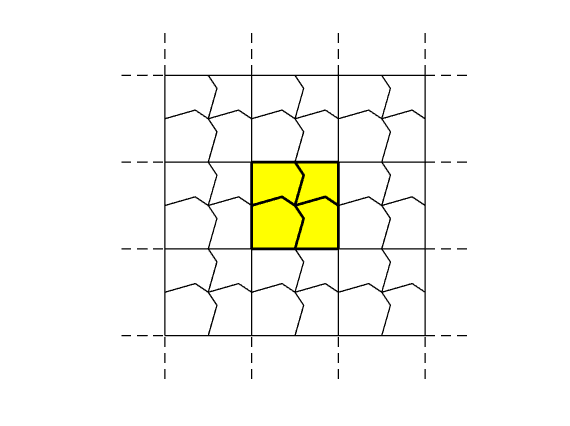} &
    \includegraphics[width=0.32\textwidth]{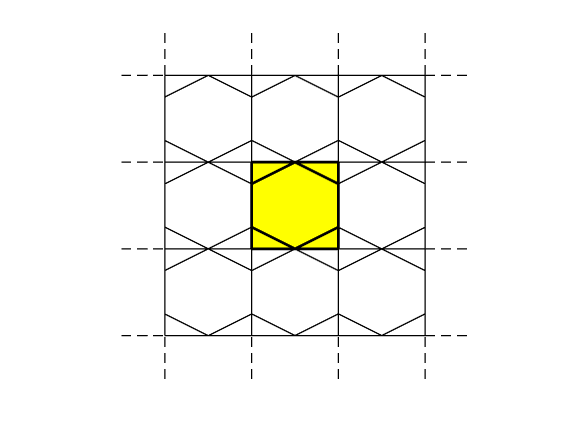} \\[-1em]
    \textit{C2 grid} &
    \textit{C3 grid} &
    \textit{C4 grid} 
  \end{tabular}
  \caption{Periodic reference element $E_{ref}$ (yellow) and periodic
    reference grids. Quadrilateral (a), triangular (b) and composite
    C1--C4 (c--f) grids.}
  \label{fig:periodic}
\end{figure}
To apply periodic boundary conditions on $E_{ref}$, we define the set
of ``master'' and ``slave'' indexes, respectively denoted by filled
and empty symbols in Figure~\ref{fig:dofs:diss-disp} for the case of
the virtual elements with polynomial degree $k=2$.
Master indexes refer to the degrees of freedom in which the solution
is computed and slave indexes to those where the periodicity
conditions are imposed.
Since the solution is a plane wave \eqref{eq:pwave}, we impose
periodic boundary conditions through a suitable projection matrix
$\mathrm{\Ps}$ and obtain the
system\cite{Antonietti-Mazzieri-Quarteroni-Rapetti:2012,mazzieri2012dispersion,ferroni2016dispersion,Antonietti-Mazzieri:2018}
\begin{equation} 
  \label{eq:eigs} 
  \calM\ddot{\Uv} + \calK\Uv = \mathbf{0},
\end{equation}
where $\calK$ and $\calM$ are given respectively by
$\calK=\mathrm{\Ps}^T\KK\Ps$ and $\calM=\mathrm{\Ps}^T\MM\Ps$.
\begin{figure}
  \centering
  \begin{tabular}{ccc}
    \includegraphics[width=0.32\textwidth]{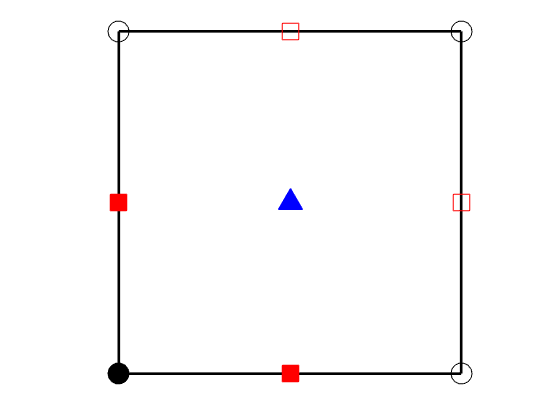} & 
    \includegraphics[width=0.32\textwidth]{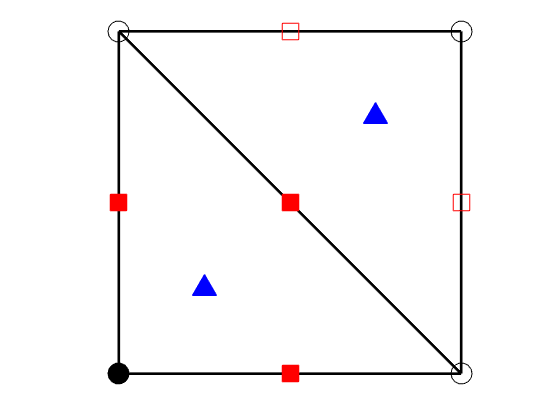} &
    \includegraphics[width=0.32\textwidth]{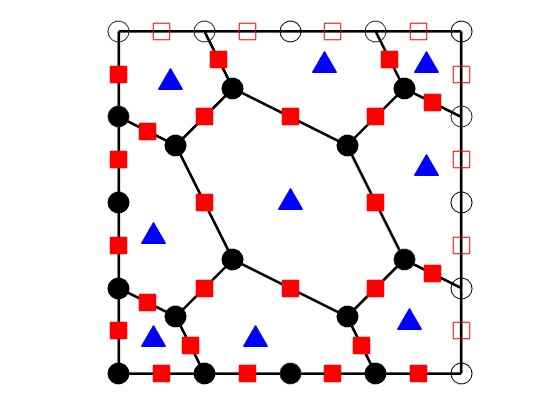} \\
    \textit{Quadrilateral grid} &
    \textit{Triangular grid} &
    \textit{C1 grid} \\[1.5em]
    \includegraphics[width=0.32\textwidth]{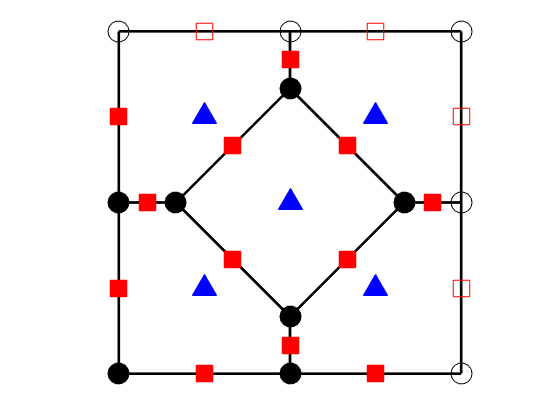} &
    \includegraphics[width=0.32\textwidth]{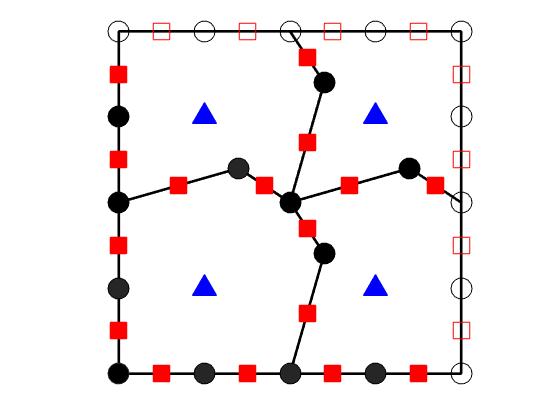} &
    \includegraphics[width=0.32\textwidth]{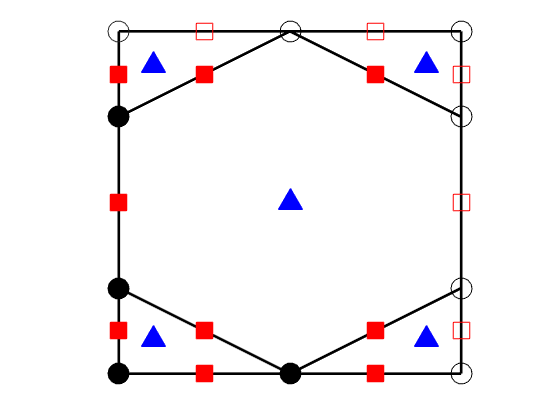} \\
    \textit{C2 grid} &
    \textit{C3 grid} &
    \textit{C4 grid} 
  \end{tabular}
  \caption{Degrees of freedom on the reference element $E_{ref}$ of
    the local virtual element space with polynomial degree $k=2$.
    Quadrilateral $(a)$, triangular $(b)$, and composite $(c)$--$(f)$
    grids: where periodic boundary conditions are assigned, the
    degrees of freedom are represented by empty squares or circles.}
  \label{fig:dofs:diss-disp} 
\end{figure}
Substitute\cite{Alford-Kelly-Boore:1974,Antonietti-Mazzieri:2018,ferroni2016dispersion}
\eqref{eq:pwave} into \eqref{eq:eigs}, discretize the system by means
of the leap-frog scheme \eqref{eq:VEM:fully-discrete} and obtain a
generalized eigenvalue problem of the form
\begin{align}
  \KK\Uv_0 = \Lambda\MM\Uv_0,
  \label{eq:gen_eig_fully}
\end{align} 
where the eigenvalues $\Lambda$ can be expressed in term of the
numerical angular frequency $\omega$ through the relation
\begin{align*}
  \Lambda= \frac{4}{\Delta t^2} \sin^2(\omega \frac{\Delta
    t}{2}).
\end{align*}
We use this equation after solving the eigenvalue problem
\eqref{eq:gen_eig_fully} in order to derive the grid-dispersion
relations as it will be shown later on.
We remark that for two dimensional elastic wave propagation only two
eigenvalues in \eqref{eq:gen_eig_fully} have a physical meaning as
they are related to P and S waves.\cite{DeBasabe-Sen:2007}
All the other eigenvalues correspond to nonphysical
modes.\cite{Cohen:2002}
Therefore, the relative dispersion errors are given by
\begin{align}\label{def:disp_errors}
  \es_P=\frac{c_{P,\hh}}{c_P}-1,
  \qquad
  \es_S=\frac{c_{S,\hh}}{c_S}-1,
\end{align}
where the numerical wave velocities $c_{P,\hh}$ and $c_{S,\hh}$ are
therefore given by
\begin{align*}
  c_{P,\hh}=\frac{\hh\,\omega_{P,\hh}}{2\pi\delta r}, 
  \qquad 
  c_{S,\hh}=\frac{\hh\,\omega_{S,\hh}}{2\pi\delta},
\end{align*}
where $\delta=\hh/(\ks\Ls)$ represents the sampling ratio, i.e. \RED{$\delta^{-1}$ is} the
number of interpolation points per wavelength, and $\Ls$ is the
wavelength.

Now, we study the stability properties of the leap-frog time-stepping
scheme and then we analyze the dispersion errors introduced by the
numerical formulation.
Since the leap-frog method is an explicit time integrator, the time
step $\Delta t$ must satisfy the Courant, Friedrichs, Lewy (CFL)
condition
\begin{align} 
  \label{eq:CFL} 
  \Delta t \leq C_{CFL} \frac{\hh}{c_P},
\end{align}
where we recall that $\hh$ is the meshsize and $c_P$ is the P-wave
velocity and we need to impose that $C_{CFL}\leq 1$.
We are interested in studying how the $C_{CFL}$ factor may depend on
the physical parameters considered in the model, i.e., $\lambda$ and
$\mu$, and the polynomial degree $k$.

\medskip
With this aim, we consider \eqref{eq:gen_eig_fully} and rewrite
matrices $\KK$ and $\MM$ by using a scaling argument, so that
\begin{align} 
  \widetilde{\KK}\Uv_0 = \Lambda' \widetilde{\MM} \Uv_0,
  \label{eq:eiggen_ref} 
\end{align}
where $\Lambda' = (\hh/\Delta t)^2 \sin^2(\omega_{\hh}\Delta t/2)$. 
Then, we introduce the stability parameter $\qs=c_P\frac{\Delta t}{\hh}$
for which it holds $ \qs^2 \Lambda' = c_P^2
\sin^2(\frac{\omega_{\hh}\Delta t}{2})\leq c_P^2 $, that is equivalent
to
\begin{align} 
  \qs\leq \frac{c_P}{\sqrt{\Lambda'}}= C_{CFL}(\Lambda').
  \label{eq:stability1}
\end{align}
The eigenvalue $\Lambda'$ depends on the wavenumber vector $\kk$ and,
therefore, on the values of the angle
$\theta$.\cite{DeBasabe-Sen:2010}
Thus, condition~\eqref{eq:stability1} is equivalent to
\begin{equation}
  \qs \leq \cs^*(\lambda,\mu) \frac{1}{\sqrt{\Lambda_{max} '}} = \qs_{CFL},
  \label{eq:q_clf}
\end{equation}
where $\Lambda_{max}'$ is the maximum eigenvalue of
\eqref{eq:eiggen_ref}, taken with respect to the value of $\theta$.

Constant $\cs^*$ depends on the Lam\'e parameters $\lambda$ and
$\mu$.\cite{Antonietti-Houston:2011}

We notice that a different approach (that does not make use of a plane
wave analysis) is employed in Park et al. (2019)\cite{Park2019} to
estimate $q_{CFL}$ for VEM approximation on arbitrary mesh.
The authors were there interested in establishing a rule of thumb for
choosing the discretization parameters that satisfy \eqref{eq:q_clf},
here the analysis is mainly focused on the dispersion and dissipation
property of the scheme, provided that condition \eqref{eq:q_clf} holds
true.
\BLUE{Finally, we remark that the present analysis is valid for mesh
  configurations that have a periodic pattern.  For a general
  polygonal mesh, made by elements of arbitrary shape, one should
  consider the associated covering mesh made by rectangular elements
  and check that, for the latter grid, a sufficient number of points
  per wavelength is employed. This can be inferred easily by applying
  the rule of thumb that will be given for quadrilateral elements.}

\subsubsection{Numerical dispersion and dissipation analysis: space discretization}

We start the section by analyzing the dispersion and dissipation
properties of the VEM by assuming exact time integration.
For this case, we consider equation \eqref{eq:eigs} and compute
analytically the second time derivative of the displacement vector.
In this way, we can obtain a generalized eigenvalue problem whose
eigenvalues $\Lambda$ can be written in term of the angular frequency
$\omega$ through the formula $\Lambda = \omega^2$.
Grid dispersion errors are computed according
to~\eqref{def:disp_errors}: (i) we solve the generalized eigenvalue
problem numerically and obtain the eigenvalues $\xi=\omega_{\hh}^2$
that represent the best approximations of the angular frequencies of
the travelling waves; (ii) we identify the numerical eigenvalues
$\xi_P$ and $\xi_S$, that correspond to the physical frequencies of
the longitudinal and transversal displacement; (iii) we compute the
numerical angular frequencies $\omega_{P,h}=\sqrt{\xi_P}$ and
$\omega_{S,h}=\sqrt{\xi_S}$ for the P-wave and the S-wave,
respectively.

We now study the dispersion errors as a function of the polynomial
degree $k$, the sampling ratio $\delta$ for the shortest wavelength,
the wavenumber vector $\mathbf{k}$ of the plane wave, i.e., the angle
$\theta$, and the ratio $r=c_P/c_S$.
In this first test case, we set $\delta=0.2$, i.e., five nodes per
shortest wavelength, $\theta= \pi/4$ and $r=2$.
Different choices of the angle $\theta$ provide similar results as it
is discussed later on.
For all the considered grids, an exponential decay of the dispersion
error with respect to the polynomial degree $k$ can be observed,
cf. Figure \ref{fig:dispN}.
In particular for $k\geq 4$ the dispersion errors are below the
threshold value $10^{-6}$ for all the considered grids, with the
composite grid C1 providing the best results.

\begin{figure}
  \hspace{-1cm}
  \includegraphics[width=1.1\textwidth]{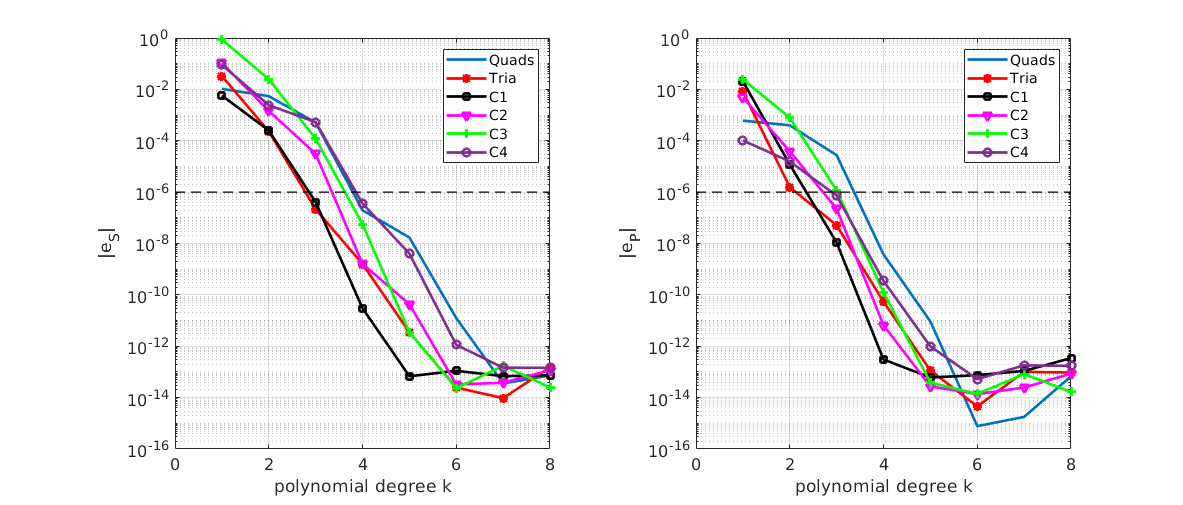}
  \caption{Computed dispersion errors $|e_S|$ and $|e_P|$ as a function
    of $k$ for $\delta=0.2$, $\theta=\pi/4$, $r=2$ and different mesh
    configuration. The threshold value $10^{-6}$ is also reported with a
    black dotted line.}
  \label{fig:dispN}
\end{figure}
However, if we look at the same results as a function of total number
of degrees of freedom in Figure~\ref{fig:disp_dofs} we see that the
quadrilateral and triangular meshes outperform the other grid
configurations.
This result is important from the computational point of view and
should be taken into account for realistic applications.
For completeness, in Table \ref{Dispersionall} we show the dispersion
errors for $r=2,5,10$ computed on the quadrilateral and triangular
grid.
Note that the same exponential decay of the dispersion error is
observed for $r=5$ and $r=10$.
Moreover, on such grid and in the variation range of $r$, the
dispersion errors are negligible, i.e., less than $10^{-6}$, when we
use a polynomial approximation degree $k>4$.
We obtained a similar behavior on the other grids (C1-C4), but we do
not report the results here for the sake of brevity.
\begin{figure}
  \includegraphics[width=1\textwidth]{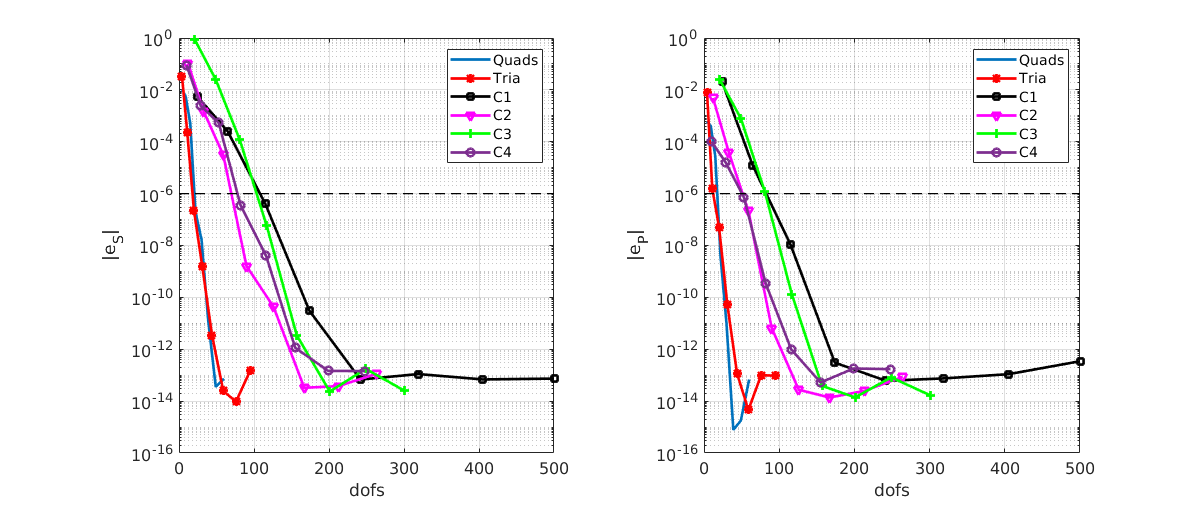}
  \caption{Computed dispersion errors $|e_S|$ and $|e_P|$ as a
    function of $dofs$ for $\delta=0.2$, $\theta=\pi/4$, $r=2$ and
    different mesh configuration. The threshold value $10^{-6}$ is
    also reported with a black dotted line.}
  \label{fig:disp_dofs}
\end{figure}
\newcommand{\TABROW}[8]{ \textrm{#1} & \quad$#2$ & \quad$#3$ & \quad$#4$ & \quad$#5$ & \quad$#6$ & \quad$#7$ & \quad$#8$ \\}
\renewcommand{\arraystretch}{1.}
\begin{table}
  \centering
  \caption{Dispersion errors $|e_P|$ and $|e_S|$ as a function of $k$ with $\delta=0.2$, $\theta=\pi/4$.}
  \label{Dispersionall}
  \begin{tabular}{@{}clccccccccc}
    \TABROW{     }{   }{ r = 2            }{ r = 2            }{ r = 5            }{ r = 5            }{ r=10             }{ r = 10           }\hline
    \TABROW{Grid }{ k }{ \ABS{\es_P}      }{ \RED{\ABS{\es_S}} }{ \ABS{\es_P}      }{ \ABS{\es_S}      }{ \RED{\ABS{\es_P}}}{ \ABS{e_S}        }\hline
    \TABROW{     }{ 1 }{ 6.0577e-04       }{ 1.0492e-02       }{ 1.5137e-05       }{ 1.0492e-02       }{ 9.4273e-07       }{ 1.0492e-02       }
    \TABROW{     }{ 2 }{ 3.9826e-04       }{ 5.4411e-03       }{ 9.7574e-06       }{ 1.6753e-03       }{ 6.0481e-07       }{ 1.5893e-03       }
    \TABROW{     }{ 3 }{ 2.7291e-05       }{ 5.1136e-04       }{ 6.0824e-08       }{ 6.3829e-05       }{ 8.6845e-10       }{ 6.0938e-05       }
    \TABROW{Quads}{ 4 }{ {3.7419e-09} }{ {1.9706e-07} }{ {3.8492e-13} }{ 1.0884e-06       }{ {6.8826e-14} }{ 1.0457e-06       }
    \TABROW{     }{ 5 }{ 9.1927e-12       }{ 1.7140e-08       }{ 8.5619e-14       }{ {1.2832e-08} }{ 1.9577e-13       }{ {1.1718e-08} }
    \TABROW{     }{ 6 }{ 4.4075e-15       }{ 1.2488e-11       }{ 1.2882e-13       }{ 1.2736e-10       }{ 1.2404e-12       }{ 1.0704e-10       }
    \TABROW{     }{ 7 }{ 2.9477e-14       }{ 3.6223e-14       }{ 3.0194e-13       }{ 5.1265e-13       }{ 8.5984e-13       }{ 4.5778e-13       }
    \TABROW{     }{ 8 }{ 1.0372e-13       }{ 7.6169e-14       }{ 4.6899e-13       }{ 1.5753e-13       }{ 8.5753e-13       }{ 9.5340e-13       }\hline
    \TABROW{     }{ 1 }{ 8.2437e-03       }{ 3.3135e-02       }{ 1.3165e-03       }{ 3.3135e-02       }{ 3.2902e-04       }{ 3.3135e-02       }
    \TABROW{     }{ 2 }{ 1.5669e-06       }{ 2.3285e-04       }{ 5.2817e-08       }{ 1.8368e-04       }{ 3.3775e-09       }{ 1.7914e-04       }
    \TABROW{     }{ 3 }{ {5.0853e-08} }{ {2.1675e-07} }{ {1.0534e-10} }{ 3.1105e-06       }{ {1.8894e-12} }{ 2.4701e-06       }
    \TABROW{Tria }{ 4 }{ 5.4010e-11       }{ 1.5845e-09       }{ 8.6515e-14       }{ {4.7236e-09} }{ 2.2625e-13       }{ {4.2769e-09} }
    \TABROW{     }{ 5 }{ 1.1938e-13       }{ 3.5012e-12       }{ 1.5574e-13       }{ 3.2109e-11       }{ 7.6318e-13       }{ 1.5237e-11       }
    \TABROW{     }{ 6 }{ 1.8481e-14       }{ 2.6318e-14       }{ 2.4079e-13       }{ 3.2615e-13       }{ 1.2960e-12       }{ 1.3384e-12       }
    \TABROW{     }{ 7 }{ 1.0408e-13       }{ 7.4694e-14       }{ 5.2719e-13       }{ 7.0126e-13       }{ 4.8367e-13       }{ 5.6891e-13       }
    \TABROW{     }{ 8 }{ 1.0693e-13       }{ 1.5421e-13       }{ 9.3388e-13       }{ 5.4632e-13       }{ 1.1880e-12       }{ 6.9274e-13       }
  \end{tabular}
\end{table}  
\renewcommand{\arraystretch}{1.}
In Figures \ref{fig:dispdelta:a}, \ref{fig:dispdelta:b}, and
\ref{fig:dispdelta:c}, we report the computed dispersion versus
$\delta$ for different approximation degrees $k=2,3,4$ and $r=2$.
Similar results can be obtained for $r=5,10$ but we do not report them for
the sake of brevity.
By computing the slopes of the $e_S$- and $e_P$-curves, we can obtain
the following empirical estimate of the orders of convergence:
$\ABS{\es_P}=\ABS{\es_S}=\calO(\hh^{2k})$ for the VEM with odd
polynomial degrees and $\ABS{\es_P}=\ABS{\es_S}=\calO(\hh^{2k-1})$ for
the VEM with even polynomial degree.
These results are in agreement with previous quantitative estimates
quadrilateral
elements.\cite{DeBasabe-Sen-Wheeler:2008,mazzieri2012dispersion}
\textcolor{red}{From the reported result we can infer that with a polynomial degree
$k=4$ and $\delta=0.2$, i.e., $5$ points per shortest wavelength, the
dispersion error is lower than the threshold $10^{-6}$ for all mesh
elements (see Figure~\ref{fig:dispdelta:c})}.
\begin{figure}
  \includegraphics[width=1.\textwidth]{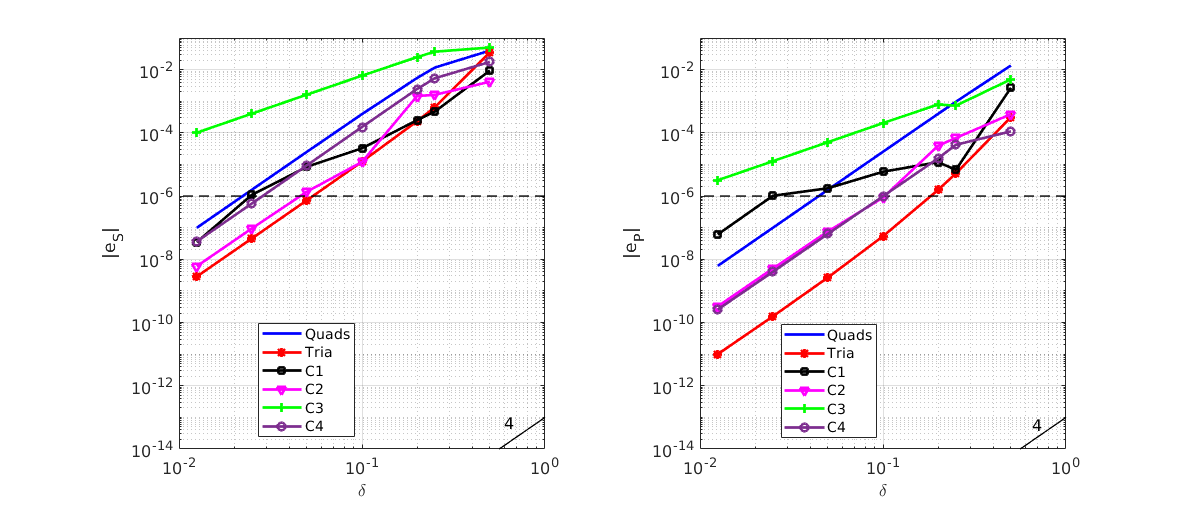}
  \caption{Computed dispersion errors as a function of $\delta$ with
    $k=2$ for all grid configurations.
    The threshold value $10^{-6}$ is also reported with a black dotted
    line.}
  \label{fig:dispdelta:a}
\end{figure}

\begin{figure}
  \centering
  \includegraphics[width=1.\textwidth]{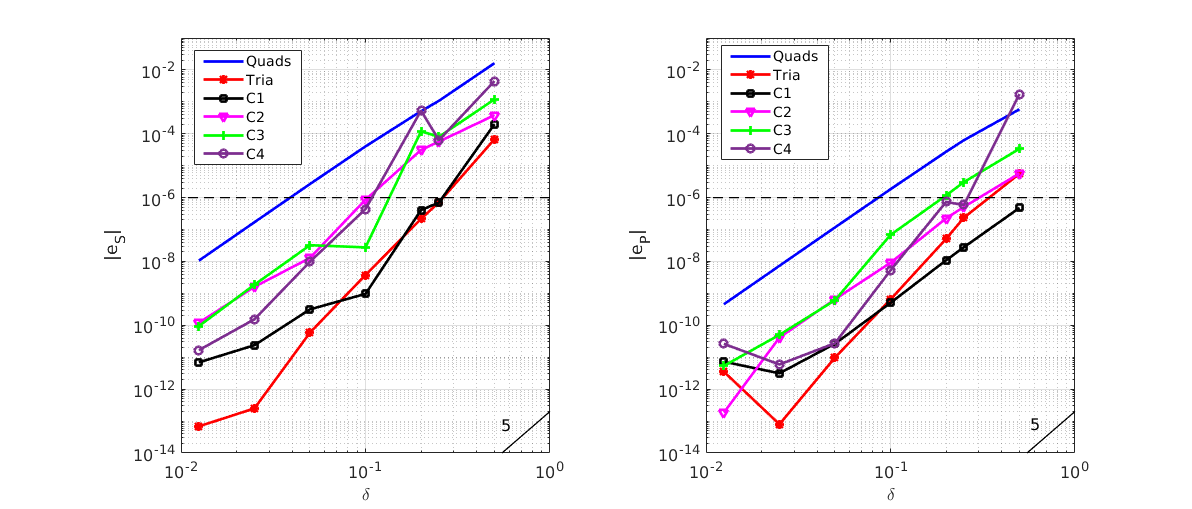}
  \caption{Computed dispersion errors versus $\delta$ with $k=3$ for
    all grid configurations.
    The threshold value $10^{-6}$ is also reported with a black dotted
    line.}
  \label{fig:dispdelta:b}
\end{figure}

\begin{figure}
  \centering
  \includegraphics[width=1.\textwidth]{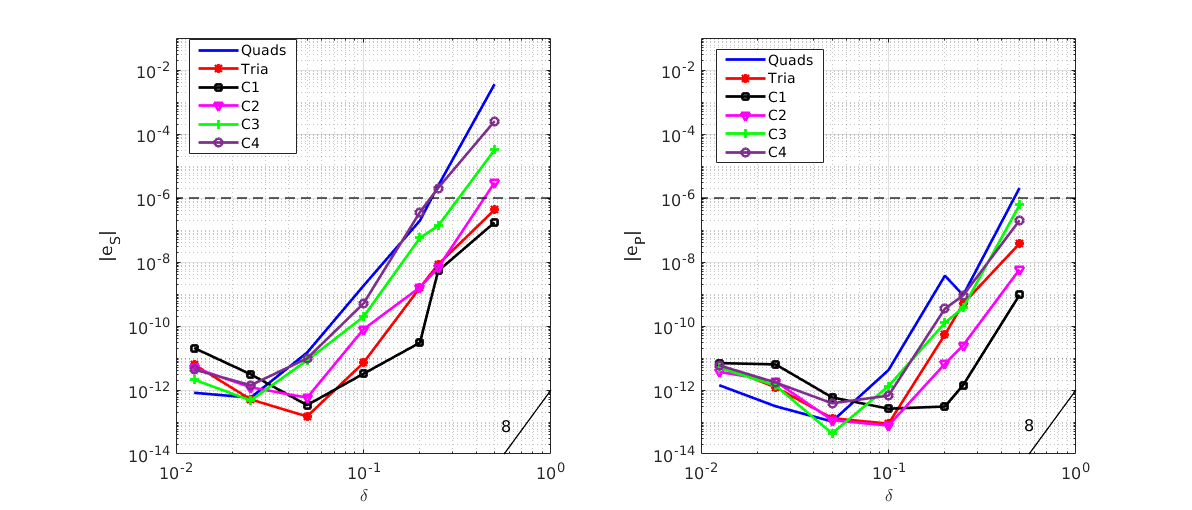}
  \caption{Computed dispersion errors versus $\delta$ with $k=4$ for
    all grid configurations.
    The threshold value $10^{-6}$ is also reported with a black dotted
    line.}
  \label{fig:dispdelta:c}
\end{figure}
Finally, we study the dispersion errors as a function of the angle
$\theta$ of the plane wave.
In Figure \ref{fig:anisotropy} we show the results obtained with the
virtual element discretization for polynomial degree $k=4$, the
sampling ratio $\delta=0.5$ and $r=2$. 
Anisotropy ratios $c_{P,h}/c_P$ and $c_{S,h}/c_S$ have been computed
with the same set of values, see Figure \ref{fig:anisotropy}.
Notice that with quadrilateral and C4 grids the error is symmetric
with respect to the origin of the axes, whereas with triangular grids
the error grows along the direction in which the periodic cell
$E_{ref}$ is split into triangles.
This grid orientation effect is in agreement with previous results
available in the
literature.\cite{Antonietti-Mazzieri-Quarteroni-Rapetti:2012,mazzieri2012dispersion,Antonietti-Mazzieri:2018}
Composite grids C1, C2 and C3 seem to perform better than other
configurations with respect to all incident angles: this fact can be
traced back to the asymmetrical pattern inside the reference cell
$E_{ref}$.
\begin{figure}
  \centering
  \subfigure[Quadrilateral grid]{\includegraphics[width=0.48\textwidth]{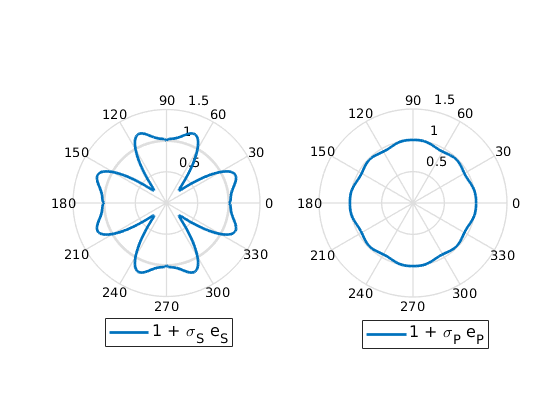}}
  \subfigure[Triangular grid   ]{\includegraphics[width=0.48\textwidth]{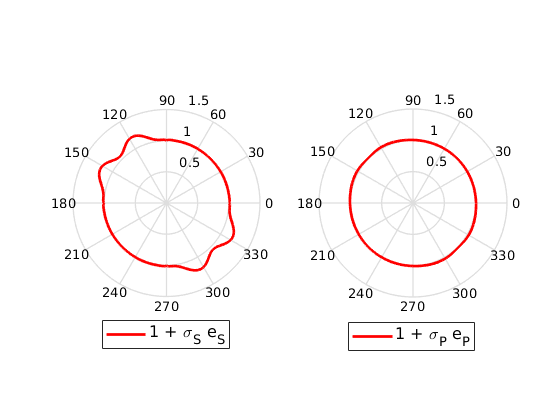}}
  \\
  \subfigure[C1 grid]{\includegraphics[width=0.48\textwidth]{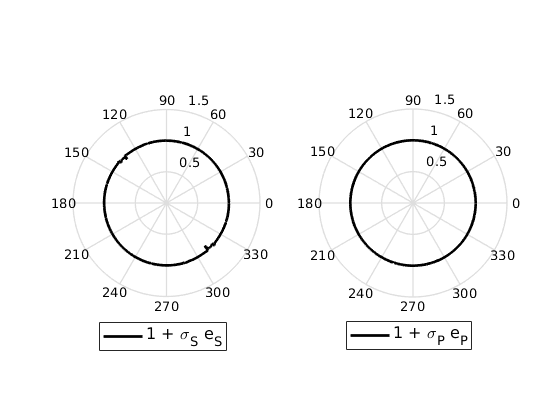}}
  \subfigure[C2 grid]{\includegraphics[width=0.48\textwidth]{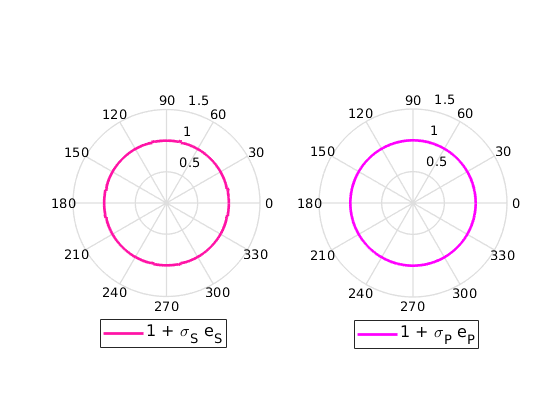}}
  \\
  \subfigure[C3 grid]{\includegraphics[width=0.48\textwidth]{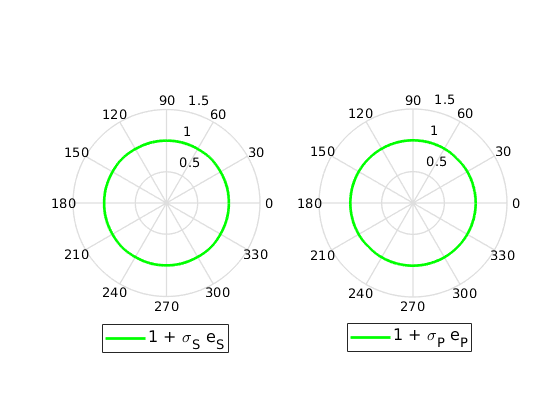}}
  \subfigure[C4 grid]{\includegraphics[width=0.48\textwidth]{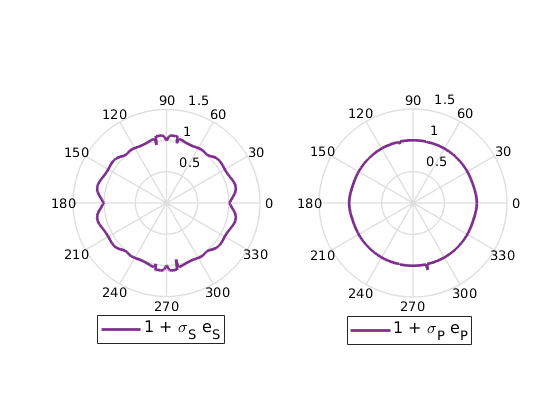}}
  \caption{Anisotropy ratios $c_{S,h}/c_S$ (left) and $c_{P,h}/c_P$
    (right) as a function of the incidence angles $\theta$ for $k=4$
    and $\delta=0.5$ for all grid configurations.
    For visualization purposes dispersion errors are magnified by
    factors $\sigma_S=200$ and $\sigma_P = 10^4$.}
  \label{fig:anisotropy}
\end{figure}
Concerning the dissipation error we study the amplitude of the
numerical displacement and take the plane wave
$\uv(\xx,t)=e^{i(\kv\cdot\xv-\omega\ts)}$ as exact solution of
\eqref{eq:pblm:strong:A}.
On the one hand, since ${\rm Im}(\omega) = 0$, its amplitude is equal
to $1$ for all times $t$.
On the other hand, the numerical approximation will give in general
${\rm Im}(\omega_h) \neq 0$.
Then, the scheme is non-dissipative if ${\rm Im}(\omega_h)=0$, whereas
it is dissipative if ${\rm Im}(\omega_h)<0$.
In our case, since the mass and the stiffness matrices appearing in
\eqref{eq:eiggen_ref} are symmetric and positive definite, the
computed eigenvalues are all real. This leads to non dissipative
schemes.

\subsubsection{Numerical dispersion and dissipation analysis: space-time
  discretization}
We present first some numerical tests for assessing a quantitative
estimate of the parameter $q_{CFL}$.
In the following, we consider $\cs_P=\sqrt{2}$, $r=2$,
$\theta\in(0,2\pi)$ and $\delta=0.2$.
Analogous conclusions can be drawn for $r=5$ and $r=10$.

\medskip
Figure \ref{fig:cfl} shows that the decay rate of $q_{CFL}$ is
approximately proportional to $k^{-3/2}$ for all the mesh element
configurations.
This result appears to be slightly better than the one obtained in
previous
works.~\cite{DeBasabe-Sen:2010,Antonietti-Mazzieri-Quarteroni-Rapetti:2012,Antonietti-Marcati-Mazzieri-Quarteroni:2016,ferroni2016dispersion,Antonietti-Mazzieri:2018}
In addition, for a given polynomial degree, quadrilateral and C1 grid
elements are subjected to a slightly more restrictive stability
condition, i.e., lower values of $q_{CFL}$ are obtained.
\begin{figure}
  \centering
  \includegraphics[width=0.5\textwidth]{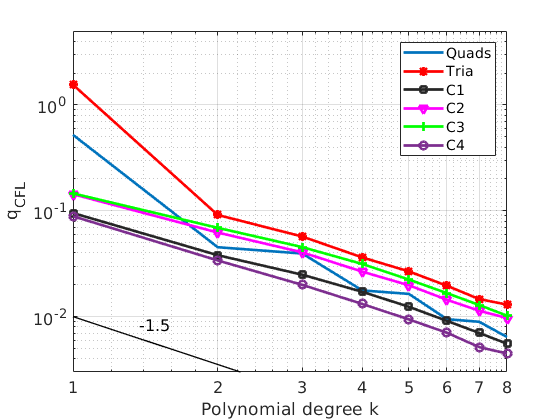}
  \caption{Stability parameter $q_{CFL}$ as a function of the
    polynomial degree $k$ for all grid configurations.}
  \label{fig:cfl}
\end{figure}
Next, we present the results of the dispersion and dissipation
analysis for the fully discrete approximation by varying the
parameters $k$, $\delta$ and the stability parameter $q$.
For the sake of conciseness, we will show only the results related to
 \RED{the triangular and the composite  C3 grids}.
Similar results have been obtained with the \RED{quadrilateral or
  composite C1, C2 and C4 meshes}.
First, we address the behavior of the dispersion error by varying the
sampling ratio $\delta$ with fix values of $k=4$ and $\theta=\pi/4$.
The relative stability parameter $\qs_{rel} = \qs\slash{\qs_{CFL}}$ is
in the range $(0.2,1)$, and $q_{CFL}$ is computed in agreement with
\eqref{eq:q_clf}.
In practice, we first compute $\qs_{CFL}$ (see \eqref{eq:q_clf}) for
any specific values $\delta$, cf. also Figure~\ref{fig:cfl}; then, we
choose $\qs\in (0,q_{CFL})$ and finally we sample the values of the
dispersion errors $\es_S$ and $\es_P$ as a function of $\qs$.
As $\Delta t$ (and then $\qs_{rel}$) goes to $0$, the fully discrete
curves tend to the semi-discrete ones, see Figure
\ref{fig:fullydisp_delta}.
In Figure~\ref{fig:fullydisp_deltadt001}, we compare the results
obtained with all mesh element configurations for $\qs_{rel}=0.2$.
On all the considered grids, the VEM reaches the same level of
accuracy, and, in particular $\delta \leq 0.2$, i.e., five points per
wavelength, are sufficient to obtain dispersion errors lower than
$10^{-6}$.
\begin{figure}[t]
  \hspace{-0.75cm}
  \begin{tabular}{c}
    \includegraphics[width=1.1\textwidth]{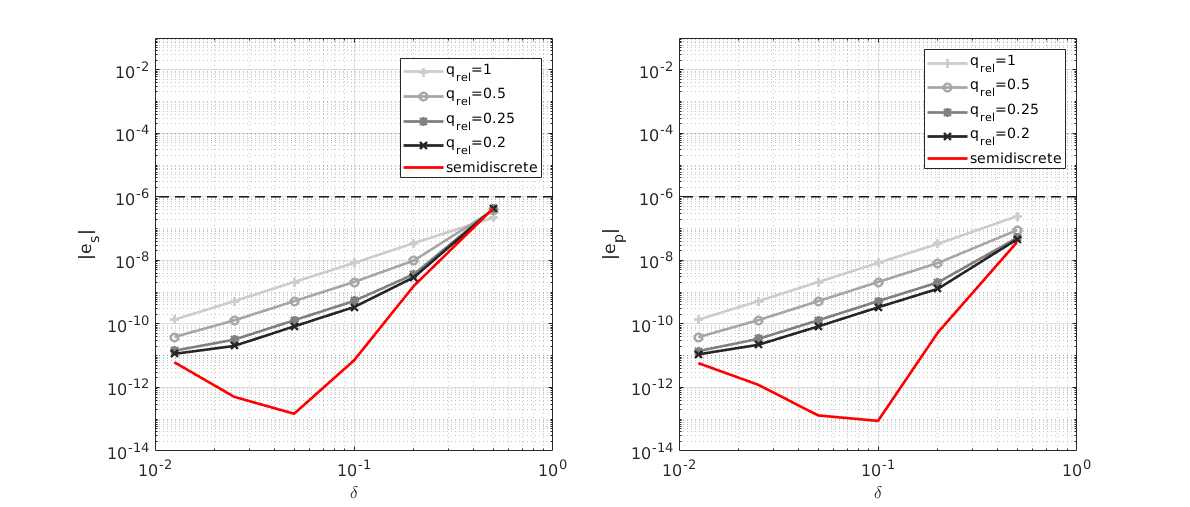}\\
    \textit{Triangular mesh} \\    
    \includegraphics[width=1.1\textwidth]{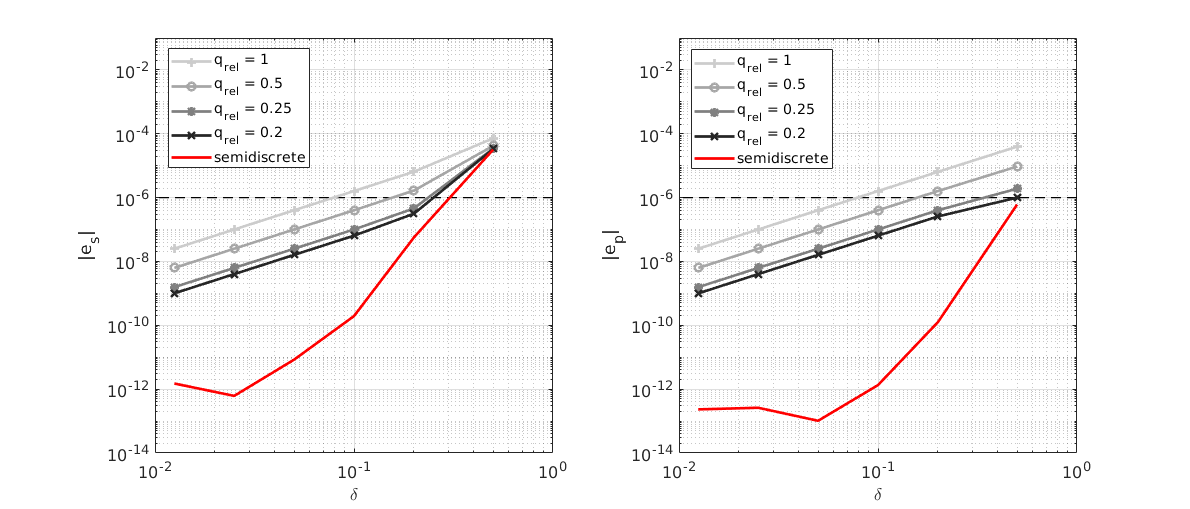} \\
    \textit{\RED{C3 mesh}}
  \end{tabular}
  \caption{Computed dispersion errors $\ABS{\es_S}$ (left) and
    $\ABS{\es_P}$ (right) as a function of $\delta$ using
    \RED{triangular} (a) and \RED{composite C3 }(b) grids, with $k=4$.
    The continuous red lines refer to the semi discrete approximation,
    while the others to the fully discrete approximation with
    $\qs_{rel}=0.2,0.25,0.5,1$.}
  \label{fig:fullydisp_delta}
\end{figure}
\begin{figure}[t]
  \centering
  \includegraphics[width=1.1\textwidth]{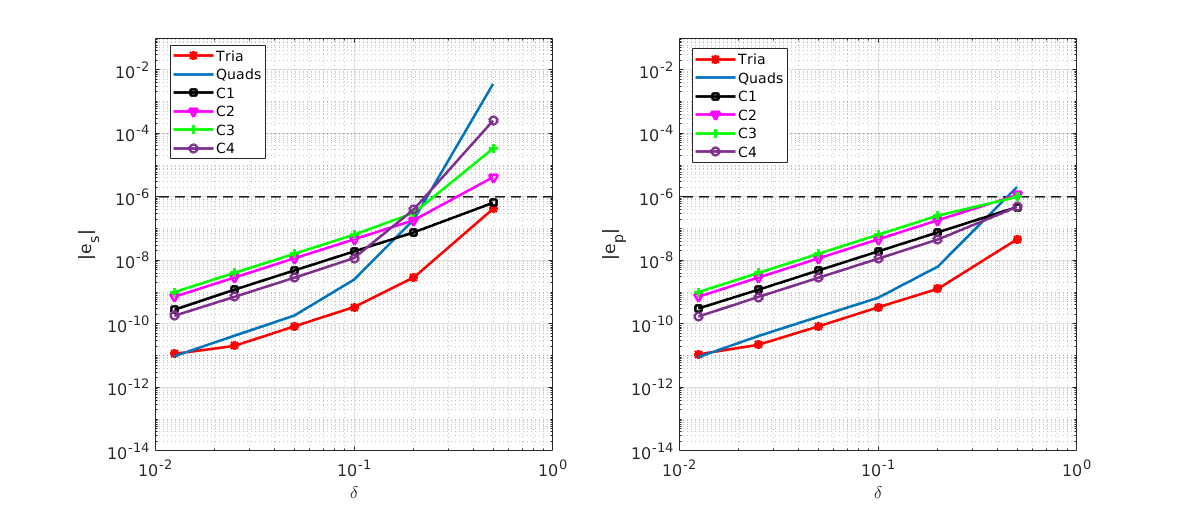}
  \caption{Computed dispersion errors $\ABS{\es_S}$ (left) and
    $\ABS{\es_P}$ (right) as a function of $\delta$, fixing $k=4$ and
    $\qs_{rel}=0.2$. }
  \label{fig:fullydisp_deltadt001}
\end{figure}
Next, by letting varying the polynomial degree $k$ for $\delta=0.2$
and $\theta=\pi/4$ we analyze the dispersion error.
In Figure \ref{fig:fullydispN}, we observe the exponential convergence
that was noted in the semi discrete case, cf. Figure~\ref{fig:dispN},
$\qs_{rel}$ goes to zero.
Indeed, for sufficiently small values of $q$, the following asymptotic
relation holds.~\cite{Antonietti-Marcati-Mazzieri-Quarteroni:2016}
\begin{align*}
  \omega_{\hh}\approx\sqrt{\Lambda}+\calO(\Delta t^2).
\end{align*}
Thus $\omega_{\hh}$ decays as in the semi discrete case until the term
$\Delta t^2$ becomes dominant.
In Figure \ref{fig:fullydispdt001}, we compare the behavior of the
fully discrete scheme obtained on \RED{all the considered grids} for $\qs_{rel}=0.2$.
We notice \RED{that for all the grid configurations} the same level of accuracy is achieved
with a polynomial degree \RED{$k\geq 4$, with a better performance of triangular and quadrilateral grids}.\\
Regarding the dissipation error of the space-time discretized scheme,
the considerations made for the space discretized case remain valid.
\begin{figure}
  \hspace{-0.75cm}
  \begin{tabular}{c}
    \includegraphics[width=1.1\textwidth]{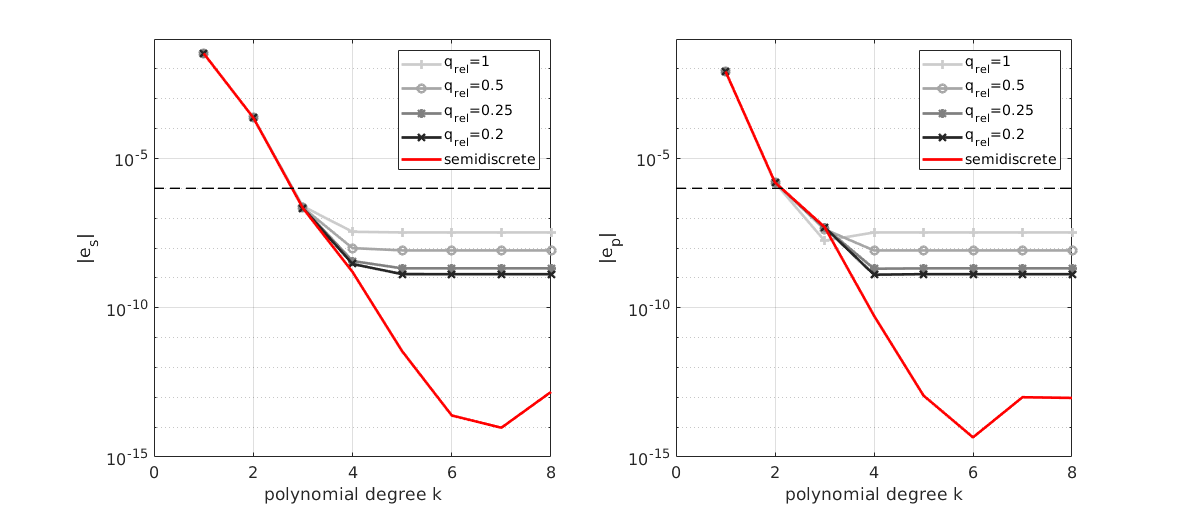} \\
    \textit{Triangular mesh} \\    
    \includegraphics[width=1.1\textwidth]{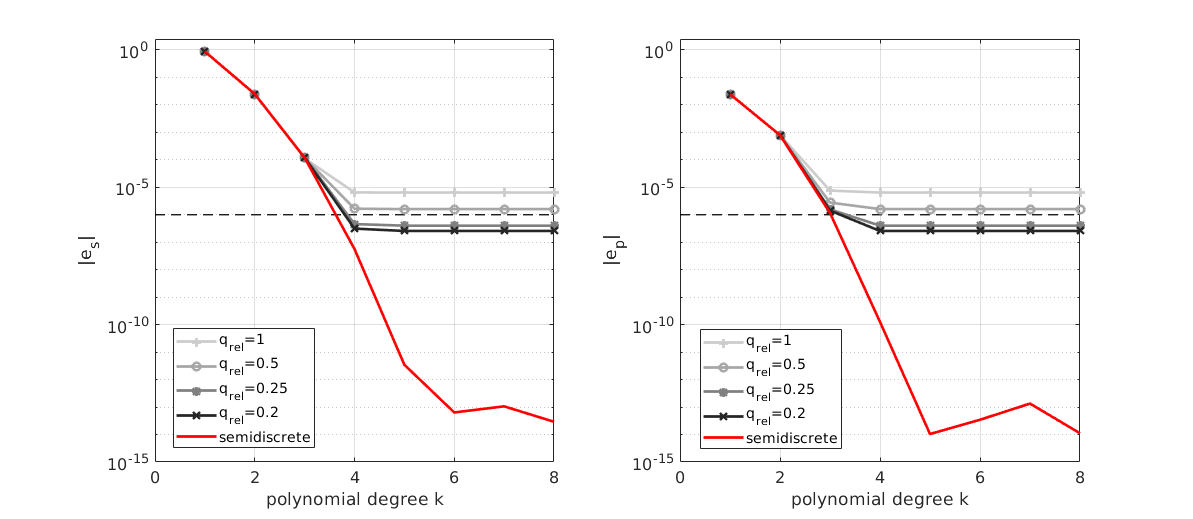} \\
    \textit{\RED{C3 mesh}}
  \end{tabular}
  \caption{Computed dispersion errors $\ABS{\es_S}$ (left) and $\ABS{\es_P}$
    (right) as a function of $k$ using \RED{triangular} (a) and
    \RED{composite C3} (b) grids, with a fixed sampling ratio $\delta=0.2$.
    The red lines refer to the semi discrete approximation, while the
    others to the fully discrete approximation with
    $\qs_{rel}=0.2,0.25,0.5,1$.}
  \label{fig:fullydispN}
\end{figure}
\begin{figure}
	\includegraphics[width=1.1\textwidth]{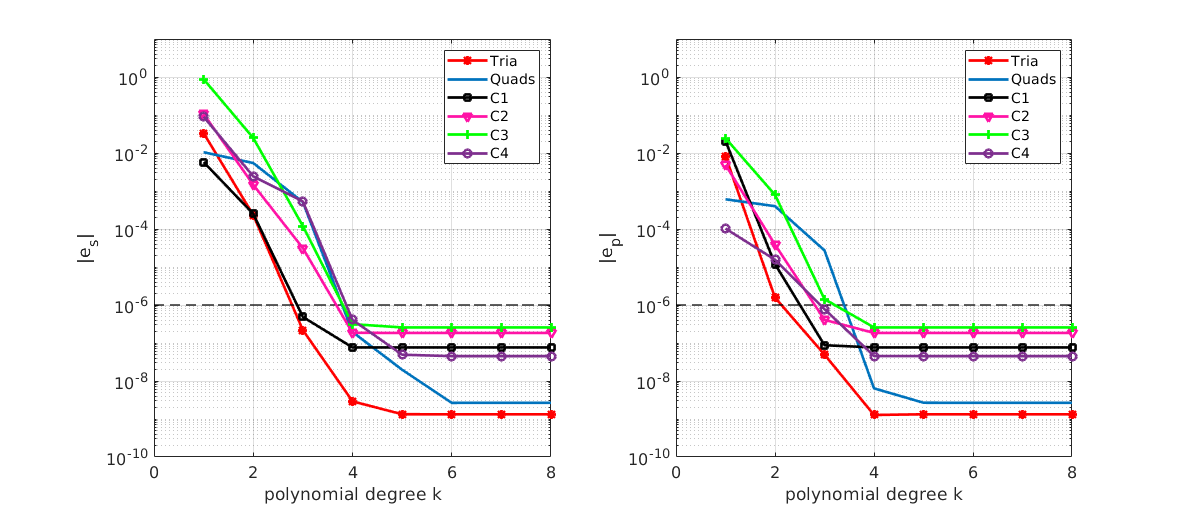}
  \caption{Computed dispersion errors $\ABS{\es_S}$ (left) and $\ABS{\es_P}$
    (right) as a function of $k$, for $\delta=0.2$ and
    $\qs_{rel}=0.2$.}
  \label{fig:fullydispdt001}
\end{figure}

\section{Conclusions}
\label{sec6:conclusion}

In this work, we extended the conforming virtual element method for
the numerical simulation of two dimensional time-dependent
elastodynamics problems.
The formulation of the VEM is investigated both theoretically and
numerically.
From the theoretical side, we proved the stability and the convergence
of the semi-discrete approximation in the energy norm and obtain
optimal rate of convergence.
We also derive $\LTWO$ error estimates with optimal convergence rate 
for the $\hh$- and $p$-refinement.
From the numerical side, we assessed the accuracy of the conforming
VEM on a set of different computational meshes, including non-convex
cells.
Optimal convergence rates in the energy norm and $\LTWO$ norm in the
$\hh$-refinement are numerically validated, and exponential
convergence is experimentally observed in both norms in the
$p$-refinement setting.
Moreover, a thorough comparison with standard VEM schemes on
simplicial/quadrilateral grids is presented in term of a
dispersion-dissipation and stability analysis, showing that polygonal
meshes behave as classical simplicial/quadrilateral grids in terms of
dispersion-dissipation and stability properties.


\section*{Acknowledgments}

\RED{We are grateful to the anonymous referees for their valuable and
  constructive comments, which have contributed to the improvement of
  the paper. We also thank Dr. Lorenzo Mascotto (University of Vienna)
  for a useful discussion on the content of Remark~\ref{remark5}.}
The first and last authors acknowledge the financial support of PRIN
research grant number 201744KLJL ``\emph{Virtual Element Methods:
Analysis and Applications}'' funded by MIUR.
The first, the third and the last authors also acknowledge the
financial support of INdAM-GNCS.
The work of the second author was supported by the Laboratory Directed
Research and Development (LDRD) Program of Los Alamos National
Laboratory under project number 20180428ER.
The work of the fourth author was supported by the LDRD program of Los
Alamos National Laboratory under project number 20170033DR.
Los Alamos National Laboratory is operated by Triad National Security,
LLC, for the National Nuclear Security Administration of
U.S. Department of Energy (Contract No. 89233218CNA000001).

\end{document}